\newlength{\defbaselineskip}
\newcommand\mathcalbb[2][2]{%
  \stackengine{0pt}{$\mathcal{#2}$}{$\mkern#1mu\mathcal{#2}$}{O}{l}{F}{F}{L}}
\newtheorem{thm}{Theorem}[section]
\newtheorem{lem}[thm]{Lemma}
\newtheorem{prop}[thm]{Proposition}
\newtheorem{cor}[thm]{Corollary}
\newtheorem{rmk}[thm]{Remark}
\newtheorem{ex}[thm]{Example}
\newtheorem{defn}[thm]{Definition}
\numberwithin{equation}{section}
\newcommand{\Cour}[1]      {\left[\!\left[#1\right]\!\right]}
\newcommand{\llbracket}{\langle\!\langle}
\newcommand{\rrbracket}{\rangle\!\rangle}
\newcommand{\<}{\langle}
\renewcommand{\>}{\rangle}
\newcommand{\Lie}       {\mathcal{L}}
\newcommand{\frakx}     {\mathfrak{X}}
\newcommand{\N}     {\mathcal{N}} 
\newcommand{\an}    {\mathfrak{a}}
\newcommand{\C}     {\mathbb{C}}   
\newcommand{\R}     {\mathbb{R}}
\newcommand{\T}     {\mathbb{T}}
\newcommand{\E}     {\Gamma_\mathcal{E}}
\newcommand{\D}     {\mathbb{D}}
\newcommand{\pr}    {\mathrm{pr}}
\newcommand{\ii}    {\mathbf{i}\,}
\renewcommand{\O}     {\mathcal{O}}
\newcommand{\Real}   {\mathrm{Re}}
\newcommand{\hol}   {\mathrm{hol}}
\newcommand{\dob}    {\overline{\partial}}
\newcommand{\Panchor} {\mathfrak{p}}
\newcommand{\newcourant} {C^{\infty}(U, \C) \otimes_{\mathcal{O}(U)} \mathcal{E}(U)}
\renewcommand{\Re}      {\mathrm{Re}}
\renewcommand{\Im}      {\mathrm{Im}}
\newcommand{\Cinfty}     {{\scriptscriptstyle{C^\infty}}}
\newcommand{\calD} {\mathcal{D}}
\title{Courant-Nijenhuis algebroids}
\author{Henrique Bursztyn} 
\address{IMPA, Estrada Dona Castorina 110, Rio de Janeiro, 22460-320, Brazil.}
\email{henrique@impa.br}
\author{Thiago Drummond}
\address{Departamento de Matem\'atica, Instituto de Matem\'atica,
Universidade Federal do Rio de Janeiro,
Caixa Postal 68530, Rio de Janeiro, RJ, 21941-909, Brasil.}
\email{drummond@im.ufrj.br}
\author{Clarice Netto}
\address{Instituto de Matem\'atica e Estat\'{i}stica, Universidade de S\~ao Paulo, Rua do Mat\~ao 1010, Cidade Universit\'aria, 05508-090 S\~ao Paulo, Brasil.}
\email{cnetto@ime.usp.br}
\date{}
\begin{document}
\maketitle

\begin{abstract} 
    We introduce Courant 1-derivations, which describe a compatibility between Cou\-rant algebroids and linear (1,1)-tensor fields and lead to the notion of Courant-Nijenhuis algebroids. We provide examples of Courant 1-derivations on exact Courant algebroids and show that holomorphic Courant algebroids can be viewed as special types of Courant-Nijenhuis algebroids. By considering Dirac structures, one 
    recovers the Dirac-Nijenhuis structures of \cite{BDN} (in the special case of the standard Courant algebroid) and  obtains an equivalent description of Lie-Nijenhuis bialgebroids \cite{Dru} via Manin triples.
 \end{abstract}

\tableofcontents


\section{Introduction}

A 1-derivation on a vector bundle $E\to M$ is a connection-like object that codifies a linear (1,1)-tensor field on the total space of $E$ in the same way that usual derivations of vector bundles correspond to linear vector fields. 
As it turns out, many examples of compatibility conditions involving structures of interest in Poisson geometry can be conveniently expressed in terms of such 1-derivations; in this context, a special role is played by ``Nijenhuis 1-derivations'', i.e., 1-derivations whose corresponding linear (1,1)-tensor fields have vanishing Nijenhuis torsion.

A motivating example is that of Poisson-Nijenhuis structures \cite{MM}, originally formulated in terms of 
an intricate notion of compatibility involving Poisson structures and  Nijenhuis operators (that includes the vanishing of the so-called Magri-Morosi concomitant). 
From the recent viewpoint of  \cite{HT,Dru}, this is understood as follows: a Nijenhuis operator $r$ on a manifold $M$ canonically gives rise to a 1-derivation on $TM$ and a dual one on $T^*M$, and the compatibility of $r$ with a Poisson bivector field $\pi$ is simply that $\pi^\sharp: TM\to T^*M$ intertwines these 1-derivations. This perspective leads the way to different generalizations, such as
\begin{itemize}
\item[(a)] Dirac-Nijenhuis structures \cite{BDN},


\item[(b)] Lie-Nijenhuis bialgebroids \cite{Dru}.
\end{itemize}

Just as Poisson structures are particular examples of both Dirac structures and Lie bialgebroids \cite{Cour,LiuXuWe},
Poisson-Nijenhuis structures are special cases of the objects in (a) and (b). In this paper, we take a step further and introduce
the closely related notion of {\em Courant-Nijenhuis algebroid}.

Dirac-Nijenhuis structures and Lie-Nijenhuis bialgebroids are motivated by the theory of Lie groupoids, in that they arise as infinitesimal counterparts of presymplectic-Nijenhuis and Poisson-Nijenhuis group\-oids, respectively, generalizing the correspondence between Poisson-Nijenhuis structures and symplectic-Nijenhuis groupoids from \cite{SX}, see \cite{BDN,Dru}. 
An important class of examples is given by holomorphic structures. Any holomorphic vector bundle can be seen as a smooth real vector bundle equipped with a special type of Nijenhuis 1-derivation, that we call a ``Dolbeault 1-derivation''. 
In this particular context, (a) and (b) recover holomorphic Dirac structures and holomorphic Lie bialgebroids, respectively, and their integrations correspond to holomorphic presymplectic and holomorphic Poisson groupoids.


Courant algebroids are central ingredients in the theory of Dirac structures and Lie bialgebroids.
The main object of study in this paper is a notion of compatibility between 1-derivations and Courant algebroids, described in Definition \ref{def:Cder} by what we call a {\em Courant 1-derivation}.
A Courant algebroid equipped with a Courant 1-derivation that is also Nijenhuis is called a {\em Courant-Nijenhuis algebroid}. Dirac structures therein generalize the Dirac-Nijenhuis structures of \cite{BDN} and provide an alternative approach to the Lie-Nijenhuis bialgebroids of \cite{Dru} via Manin triples.

The paper is structured as follows. In $\S$ \ref{sec:1der} we recall 1-derivations on vector bundles, their main examples and properties, including the notion of duality, the Nijenhuis condition, and their compatibility with (pre-)Lie algebroid structures. 
Courant 1-derivations and Courant-Nijenhuis algebroids are introduced in $\S$ \ref{sec:1derCA}, along with their Dirac structures.
As a basic example, we show that any (1,1)-tensor field on a manifold $M$ canonically defines a Courant 1-derivation on the standard Courant algebroid $\T M=TM \oplus T^*M$ that underlies the Dirac-Nijenhuis structures studied in \cite{BDN}. 
In $\S$ \ref{sec:T+T*} we consider more general Courant 1-derivations on $\T M$, including a class of examples obtained through  the additional choice of a (pseudo-)riemannian metric; in this case, the Nijenhuis condition is shown to be related to the K\"ahler compatibility condition (Prop.~\ref{prop:Nij}).
In $\S$ \ref{sec:hol} we show (Theorem~\ref{thm:holcourant}) that Courant-Nijenhuis algebroids defined by Dolbeault 1-derivations coincide with 
 holomorphic Courant algebroids.
In $\S$ \ref{sec:double} we consider lagrangian splittings of Courant algebroids equipped with Courant 1-derivations (Theorem~\ref{thm:manin}). We show in particular that the Drinfeld double of a Lie-Nijenhuis bialgebroid is a Courant-Nijenhuis algebroid, thereby establishing an equivalence between Lie-Nijenhuis bialgebroids and Courant-Nijenhuis algebroids equipped with splittings by Dirac-Nijenhuis structures.






\begin{rmk}{\em
Other works in the literature involve Nijenhuis structures and Courant algebroids but follow a different direction. Nijenhuis tensors on Courant algebroids have been considered by various authors \cite{CarGraMar,grab,Yvette}, with applications to deformations, hierarchies and compatibilities of geometric structures, see e.g. \cite{ACXNC,ANC}.  The fundamental objects of interest in these papers are vector bundle endomorphisms $N: E\to E$ with vanishing Nijenhuis torsion, where $E$ is a Courant algebroid and Nijenhuis torsion is defined with respect to the Courant bracket. In contrast, the present paper studies a notion of compatibility between a (ordinary) linear Nijenhuis operator $K: TE\to TE$ on the total space of $E$ and the Courant structure on $E$.}
\end{rmk}


\medskip

\noindent {\bf Acknowledgments}. This project was partially supported by CNPq (National Council for Scientific and Technological Development), FAPERJ (Rio de Janeiro State Research Foundation) and by grant $\sharp$ 2022/06205-2 of FAPESP (São Paulo State Research Foundation).

\section{1-Derivations on vector bundles}\label{sec:1der}
A central role in this paper is played by 1-derivations on vector bundles,
so we start by briefly recalling their definition and basic properties.

Let $p:E \to M$ be a smooth, real vector bundle. A {\em 1-derivation} on $E\to M$ is a triple $\mathcal{D} = (D,l,r)$, where $r: TM \to TM$ and $l:E \to E$ are vector-bundle maps covering the identity, and $D: \Gamma(E) \to \Gamma(T^*M\otimes E)$ is an $\R$-linear map satisfying the following Leibniz-type condition:
\begin{equation}\label{eq:Leibniz}
D_X(f\sigma) = fD_X(\sigma) + (\Lie_Xf) \, l(\sigma) - (\Lie_{r(X)}f) \,\sigma,
\end{equation}
where $X\in \mathfrak{X}(M)$, $\sigma\in \Gamma(E)$ and $f\in C^\infty(M)$, and we use the notation 
$$
D_X: \Gamma(E)\to \Gamma(E), \qquad D_X(\sigma) = i_X(D(\sigma)).
$$
Note that the linear combination of 1-derivations (defined componentwise) is a 1-derivation.

Let $F\subseteq E$ be a subbundle (over the same base, for simplicity).

\begin{defn}\label{def:compat}
We say that $F$ is {\em $\mathcal D$-invariant} if 
\begin{itemize}
    \item $l(F)\subseteq F$,
    \item $D_X (\Gamma(F))\subseteq \Gamma(F), \qquad \forall X\in \mathfrak{X}(M)$.
\end{itemize}
\end{defn}

When $F$ is $\mathcal D$-invariant, $\mathcal{D}$ restricts to a 1-derivation on $F$, 
$$
(D|_{\Gamma(F)}, l|_F, r).
$$


\subsection{Equivalence with linear (1,1)-tensors fields}\label{subsec:1,1}

Just as usual derivations on a vector bundle $E\to M$ are equivalent to linear vector fields on $E$ (see e.g. \cite{Mac-book}), 1-derivations are in bijective correspondence with linear (1,1)-tensor fields, see \cite[$\S$~6.1]{HT} and \cite[$\S$~2.1]{BDN}.

Let $K \in \Omega^1(E,TE)$, i.e. $K$ is a (1,1)-tensor field on the total space $E$, that we view as a 
vector-bundle morphism $K:TE \to TE$. We say that $K$ is {\em linear} if $K:TE \to TE$ is also a vector-bundle morphism from the tangent prolongation bundle $TE \to TM$ to itself (not necessarily covering the identity on $TM$). Linear (1,1)-tensor fields form a linear subspace of $\Omega^1(E,TE)$.
The reader can find a detailed treatment of linear tensors in \cite{HT}, where the results stated in this section can be found.

Any linear (1,1)-tensor field $K: TE \to TE$ gives rise to a 1-derivation $\mathcal{
D}=(D, l, r)$ on $E$ as follows: $r: TM \to TM$ is the restriction of $K|_M$ to vectors tangent to the zero section, $l: E \to E$ is the restriction of $K|_M$ to vectors tangent to the $p$-fibers, and $D: \Gamma(E) \to \Gamma(T^*M \otimes E)$ is the $\R$-linear map defined by
$$
D_X(\sigma) = (\Lie_{\sigma^\uparrow}K)(X), \,\qquad \sigma \in \Gamma(E), \, X \in TM,
$$
where $\sigma^\uparrow \in \frakx(E)$ is the {\em vertical lift} of $\sigma \in \Gamma(E)$, given by $\sigma^\uparrow(e) = \left.\frac{d}{dt}\right|_{t=0} e+t \,\sigma(p(e))$. This assignment establishes a linear bijection between linear (1,1)-tensor fields and 1-derivations on $E$ with natural functorial properties, see \cite[Thm.~2.1]{BDN}. A direct consequence is that a subbundle $(F\to M) \subset (E\to M)$ is $\mathcal{
D}$-invariant if and only if it is preserved by the corresponding linear (1,1) tensor field $K$,
$$
K(TF)\subseteq TF.
$$




Let us give some examples.

\begin{ex}[Connections]\label{ex:connection}\em
A connection $\nabla$ on $E$ defines a 1-derivation with $r=0$, $l=\mathrm{id}_E$ and $D=\nabla$. The corresponding linear (1,1)-tensor field $K: TE \to TE$ is the projection operator on the vertical bundle $\mathrm{ker}(Tp)\subseteq TE$ along the horizontal bundle defined by $\nabla$. \hfill $\diamond$
\end{ex}

\begin{ex}[Tangent and cotangent lifts] \label{ex:lifts}\em Given a (1,1)-tensor  on $M$, $r: TM \to TM$, its tangent lift $r^{tg}: T(TM) \to T(TM)$ and cotangent lift $r^{ctg}: T(T^*M) \to T(T^*M)$ are linear (1,1)-tensor fields on $TM$ and $T^*M$, respectively, defined as follows:
$$
r^{tg} = \Theta \circ Tr \circ \Theta, \qquad (\omega_{can})^{\flat}\circ r^{ctg} = (\varphi_r^*\,\omega_{can})^{\flat},
$$
where $\Theta: T(TM) \to T(TM)$ is the canonical involution of the double tangent bundle $T(TM)$, $\omega_{can}$ is the canonical symplectic form on $T^*M$, and $\varphi_r: T^*M \to T^*M$ is just $r^*$ seen as a smooth map from $T^*M$ to itself. The linearity of  $r^{tg}$ and $r^{ctg}$  was proved in \cite[Thm.~3.4]{Dru}, along with the  identification of their corresponding 1-derivations as 
$$
\mathcal{D}^r=(D^r, r, r), \quad\;\; \mathrm{ and } \quad \;\; \mathcal{D}^{r,*}=(D^{r,*},r^*,r),
$$ 
where, for $X\in \mathfrak{X}(M)$, $D^r_X: \mathfrak{X}(M)\to \mathfrak{X}(M)$ and $D^{r,*}_X: \Omega^1(M) \to \Omega^1(M)$ are given by
\begin{align}
\label{defn:Dr} D^r_X(Y) & = (\Lie_Yr)(X) = [Y,r(X)]-r([Y,X]),\\
\label{defn:Dr_dual} D^{r,*}_X(\alpha) & = \Lie_{X}(r^*\alpha) - \Lie_{r(X)}\alpha.
\end{align}
 \hfill $\diamond$
\end{ex}

\begin{ex}[Holomorphic structures and Dolbeault 1-derivations]\label{ex:hol}\em Let $r:TM \to TM$ be a complex structure on $M$.  A holomorphic vector bundle $\mathcal{E} \to M$ can be regarded as a real vector bundle $E\to M$ equipped with a fibrewise complex structure $l\in \mathrm{End}(E)$, $l^2=-\mathrm{Id}$, and a flat  $T^{0,1}$-connection $\dob$
on the complex vector bundle $(E,l)$ \cite{rawnsley}. 
One can equivalently express the holomorphic structure on $E$ determined by $r$, $l$ and $\overline{\partial}$ as a  1-derivation $\mathcal{D}^{Dolb} = (D, l, r)$, where
\begin{equation}\label{eq:Dhol}
D_X(\sigma) = l(\dob_{X+\mathbf{i}r(X)}\sigma).
\end{equation}
Such 1-derivations arising from holomorphic structures will be referred to as {\em Dolbeault 1-derivations}, and they are characterized by the fact that the corresponding linear (1,1)-tensor fields $K: TE \to TE$ are complex structures  on the total space of $E$ (see Example~\ref{ex:holrev} below). Subbundles of $E$ which are $\mathcal{D}^{Dolb}$-invariant (in the sense of Def.~\ref{def:compat}) are holomorphic subbundles.

As special cases, the holomorphic structures on $TM$ and $T^*M$ induced by a complex structure $r: TM\to TM$ are given by the 1-derivations $\mathcal{D}^r$ and $\mathcal{D}^{r,*}$  from the previous example, with corresponding linear complex structures $r^{tg}$ and $r^{ctg}$, see  \cite[$\S$ 5.1]{Dru}.  \hfill $\diamond$


\end{ex}


\subsection{The Nijenhuis condition}

Let $T:TN\to TN$ be a (1,1)-tensor field on a manifold $N$. 
The Nijenhuis torsion of $T$ is $\N_T\in \Omega^2(E,TE)$ given by
$$
\N_T(X_1,X_2) = [T(X_1),T(X_2)]-T([X_1,X_2]_T), \,\,\, X_1,\,X_2 \in \frakx(N),
$$
where $[X_1,X_2]_T= [T(X_1),X_2]+[X_1,T(X_2)]-T([X_1,X_2]$ is the deformed bracket. If $\N_T=0$, $T$ is called a {\em Nijenhuis operator}.


By the equivalence between 1-derivations and linear (1,1)-tensor fields, properties of the latter can be expressed in terms of the former. 
Let $\mathcal{D} = (D, l, r)$ be a 1-derivation with corresponding
linear (1,1)-tensor field $K$ on $E$.
It is proven in \cite[$\S$~6]{HT} that 
\begin{equation}\label{eq:van_Nijenhuis}
 \N_K= 0 \Longleftrightarrow \left\{
 \begin{array}{l}
 \N_r=0, \\
 D_X(l(\sigma))-l(D_X(\sigma))=0,\\
 l(D_{[X, Y]}(\sigma)) - [D_X, D_Y](\sigma) - D_{[X,Y]_r}(\sigma) = 0,
 \end{array}
 \right.
\end{equation}
where $[D_X, D_Y]$ is the commutator of operators on $\Gamma(E)$. 
We refer to the equations on the right hand side as the \textit{Nijenhuis equations} for the 1-derivation $(D,l,r)$, and
 1-derivations satisfying them will be called {\em Nijenhuis 1-derivations}.

 We will also consider the algebraic condition $K^2 = -\mathrm{id}_{TE}$, saying that $K$ is an almost complex structure on the total space of $E$. In this case, it is also proven in \cite[Cor.~6.2]{HT} that
\begin{equation}\label{eq:square_minus_id}
 K^2= -\mathrm{id}_{TE} \Longleftrightarrow \left\{
 \begin{array}{l}
 r^{2}=-\mathrm{id}_{TM},\\  
 l^2=-\mathrm{id}_{E},\\
 D_{r(X)}(\sigma) + l(D_X(\sigma))=0.
 \end{array}
 \right.
\end{equation}

\begin{ex}[Dolbeault 1-derivations revisited]\label{ex:holrev} {\em 
Following Example~\ref{ex:hol}, a Dolbeault 1-deri\-va\-tion on $E\to M$ is a 1-derivation $(D, l, r)$ on  $E$ satisfying both the Nijenhuis equations  \eqref{eq:van_Nijenhuis} and the almost complex equations  \eqref{eq:square_minus_id}; note that these conditions exactly say that $(M,r)$ is a complex manifold and 
\begin{equation}\label{eq:flat}
\overline{\partial}_{X+\ii r(X)}:= -l(D_X)
\end{equation}
is a flat $T^{0,1}$-connection on the complex vector bundle $(E, l)$.  We will think of a holomorphic vector bundle $\mathcal{E}$ as a  pair
$$
\mathcal{E}= (E, \mathcal{D}^{Dolb})
$$
given by a real vector bundle $E\to M$ equipped with a Dolbeault 1-derivation $\mathcal{D}^{Dolb}$. 
}
\hfill $\diamond$
\end{ex}

\subsection{Duality}
Just as usual connections, 1-derivations on a vector bundle $E\to M$ possess a duality operation that establishes a bijection between 1-derivations on $E$ and $E^*$. Given a 1-derivation $\mathcal{D} = (D,l,r)$ on $E$, its dual 1-derivation is $\mathcal{D}^*= (D^*, l^*, r)$, where $D^*: \Gamma(E^*) \to \Gamma(T^*M\otimes E^*)$ is characterized by
\begin{equation}\label{defn:dual_derivation}
 \<D^*_X(\mu), \sigma\> = \Lie_X\<\mu,l(\sigma)\> - \Lie_{r(X)}\<\mu, \sigma\> - \<\mu, D_X(\sigma)\>.
\end{equation}
The corresponding linear (1,1)-tensor on $E^*$ is denoted by 
$$
K^\top: T(E^*) \to T(E^*)
$$ 
and admits the following characterization. If $\llbracket \cdot, \cdot \rrbracket: TE \times_{TM} T(E^*) \to TM \times \R$ is the non-degenerate, symmetric, bilinear pairing obtained from differentiation of the natural pairing $\<\cdot, \cdot\>: E \times_M E^* \to M \times \R$, then
$$
\llbracket K(U), K^\top(V) \rrbracket = \llbracket Tl(U), V\rrbracket, \qquad \forall \, (U,V) \in TE\times_{TM} T(E^*).
$$

Applying duality in  Examples \ref{ex:connection}, \ref{ex:lifts} and \ref{ex:hol}, one obtains the following (see \cite[$\S$~2.1.2]{Dru} for details):
\begin{itemize}
    \item For the 1-derivation associated to a connection $\nabla$, the dual 1-derivation corresponds to the dual connection $\nabla^*$ on $E^*$.
    \item For $r: TM \to TM$, the 1-derivations $\mathcal{D}^r$ and $\mathcal{D}^{r,*}$
    (corresponding to $r^{tg}$ and $r^{ctg}$) are dual to each other.
    \item A 1-derivation $\mathcal{D}$ is Nijenhuis if and only if so is $\mathcal{D}^*$, see \cite{Dru}; in this case,  $\mathcal{D}$ satisfies the almost complex conditions if and only if so does $\mathcal{D}^*$. As a consequence, $\mathcal{D}$ is a Dolbeault 1-derivation if and only if so is $\mathcal{D}^*$. For a holomorphic vector bundle $\mathcal{E}$, the dual to its Dolbeault 1-derivation is the Dolbeault 1-derivation corresponding to the holomorphic structure of the dual $\mathcal{E}^*$, see Example~\ref{ex:holrevext} below.
\end{itemize}

\medskip

It will be  useful to extend, for each $X\in \mathfrak{X}(M)$, the operator $D^*_X$ to  $\Gamma(\wedge^m E^*)$ as follows:
\begin{align}
\label{eq:extension_f}\ D_X^*(f) = & -\Lie_{r(X)}f, \\
\label{eq:extension} D^*_X(\mu)(\sigma_1,\ldots,\sigma_m) =
& \Lie_X \mu(l( \sigma_1),\ldots,\sigma_m) -  \Lie_{r(X)} \mu(\sigma_1,\ldots,\sigma_m)    \\
\nonumber & - \sum_{k=1}^{m} (-1)^{k-1} \mu(D_X(\sigma_k), \sigma_1,\ldots, \widehat{\sigma_k},\ldots , \sigma_m), 
\end{align}
where $f \in C^\infty(M)$, and $m \geq 1$. 
Note that $D_X^*(\mu)$ does not define an element of $\Gamma(\wedge^m E^*)$ in general, since it is not $C^{\infty}(M)$-multilinear.
In general, $D_X(\mu)(\sigma_1, \cdot)$ is skew-symmetric as a map from $(m-1)$ copies of $\Gamma(E)$ to $C^{\infty}(M)$, and it satisfies
\begin{align*}
D_X^*(\mu)(f\sigma_1, \cdot) & = f D_X^*(\mu)(\sigma_1,\cdot), \\ 
D_X^*(\mu)(\sigma_1, f\sigma_2, \cdot) & = f D_X^*(\mu)(\sigma_1, \sigma_2, \cdot) + (\Lie_Xf) (\mu(l(\sigma_1), \sigma_2, \cdot) - \mu(\sigma_1, l(\sigma_2), \cdot)).
\end{align*}

Let us consider
$$
\Gamma_l(\wedge^m E^*) = \{\mu \in \Gamma(\wedge^m E^*) \,\, | \,\, \mu(l(\sigma_1), \sigma_2, \cdot) = \mu(\sigma_1, l(\sigma_2), \cdot), \,\, \forall \, \sigma_1, \, \sigma_2 \in \Gamma(E)\}.
$$ 
The next result follows immediately.

\begin{prop}
For $\mu \in \Gamma(\wedge^m E^*)$, $D_X^*(\mu) \in \Gamma(\wedge^m E^*)$ if and only if $\mu \in \Gamma_l(\wedge^m E^*)$. Moreover, if $[D_X,l] = D_X \circ l - l\circ D_X = 0$, then 
$D^*_X(\Gamma_l(\wedge^\bullet E^*) \subset  \Gamma_l(\wedge^\bullet E^*)$.
\end{prop}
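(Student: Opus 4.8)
The plan is to extract everything from the defining formula \eqref{eq:extension} together with the two Leibniz-type identities for $D_X^*(\mu)$ displayed above. For an arbitrary $\mu\in\Gamma(\wedge^m E^*)$ we already know that $D_X^*(\mu)$ is skew-symmetric in its last $m-1$ arguments and, by the first of those identities, $C^\infty(M)$-linear in its first argument; hence $D_X^*(\mu)$ lies in $\Gamma(\wedge^m E^*)$ precisely when it is, in addition, $C^\infty(M)$-linear in the second argument and skew under the transposition $(1\,2)$, because the transpositions $(1\,2),(2\,3),(3\,4),\dots$ generate $S_m$ and skew-symmetry together with $C^\infty(M)$-linearity in any one slot forces full $C^\infty(M)$-multilinearity.

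To prove the equivalence I would proceed as follows. The second Leibniz-type identity shows that $D_X^*(\mu)$ is $C^\infty(M)$-linear in its second argument if and only if the correction $(\Lie_X f)\big(\mu(l(\sigma_1),\sigma_2,\cdot)-\mu(\sigma_1,l(\sigma_2),\cdot)\big)$ vanishes for all $f$, $\sigma_1$, $\sigma_2$; since $\mu(l(\sigma_1),\sigma_2,\cdot)-\mu(\sigma_1,l(\sigma_2),\cdot)$ is $C^\infty(M)$-multilinear and $\Lie_X f$ can be arranged to be nonzero near any prescribed point (letting $X$ vary, or assuming $X$ nonvanishing there), this happens exactly when $\mu\in\Gamma_l(\wedge^m E^*)$. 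It remains only to check that, for $\mu\in\Gamma_l(\wedge^m E^*)$, the expression $D_X^*(\mu)$ is also skew under $(1\,2)$: among the three terms of \eqref{eq:extension}, the $\Lie_{r(X)}$-term and the summation term are skew in all arguments for every $\mu$ (the latter by the usual cancellation using skew-symmetry of $\mu$), while for the term $\Lie_X\mu(l(\sigma_1),\sigma_2,\dots,\sigma_m)$ one invokes the elementary fact that $\mu\in\Gamma_l$ makes the map $(\sigma_1,\dots,\sigma_m)\mapsto\mu(l(\sigma_1),\sigma_2,\dots,\sigma_m)$ skew-symmetric: it is skew in the last $m-1$ slots since $\mu$ is, and skew under $(1\,2)$ since $\mu(l(\sigma_2),\sigma_1,\cdot)=\mu(\sigma_2,l(\sigma_1),\cdot)=-\mu(l(\sigma_1),\sigma_2,\cdot)$, and these generate all permutations. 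This proves the ``if and only if''.

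For the ``moreover'' statement, assume $[D_X,l]=0$ (the second of the Nijenhuis equations in \eqref{eq:van_Nijenhuis}) and take $\mu\in\Gamma_l(\wedge^m E^*)$, so $D_X^*(\mu)\in\Gamma(\wedge^m E^*)$ by the part just proved. It then suffices to verify $D_X^*(\mu)(l(\sigma_1),\sigma_2,\cdot)=D_X^*(\mu)(\sigma_1,l(\sigma_2),\cdot)$, which I would do by expanding both sides with \eqref{eq:extension} and comparing the terms one at a time: the $\Lie_X$- and $\Lie_{r(X)}$-terms match after using $\mu\in\Gamma_l$ to slide $l$ between adjacent slots (for example $\mu(l^2(\sigma_1),\sigma_2,\cdot)=\mu(l(\sigma_1),l(\sigma_2),\cdot)$), and the $D_X$-terms match after first rewriting $D_X(l(\sigma))=l(D_X(\sigma))$ and then again using $\mu\in\Gamma_l$. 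The only point requiring care is the summation term for general $m$, where the $l$-modified entry sits in different positions on the two sides and must be transported across slots with the correct signs via skew-symmetry of $\mu$; this is routine bookkeeping and needs no input beyond $\mu\in\Gamma_l$ and $[D_X,l]=0$. In short there is no serious obstacle here: the one genuine idea is recognizing $\Gamma_l(\wedge^\bullet E^*)$ as exactly the subspace on which the correction terms in \eqref{eq:extension} vanish.
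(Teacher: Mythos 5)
Your proof is correct and is exactly the verification the paper has in mind: the authors state the result with no proof ("follows immediately" from the two displayed Leibniz-type identities and the definition \eqref{eq:extension}), and your argument — reading off $C^\infty(M)$-linearity in the second slot and skewness under the transposition of the first two slots from the correction term $(\Lie_Xf)\,(\mu(l(\sigma_1),\sigma_2,\cdot)-\mu(\sigma_1,l(\sigma_2),\cdot))$, then checking the $\Gamma_l$-condition for $D_X^*(\mu)$ term by term using $[D_X,l]=0$ and the fact that $l$ can be slid between any two slots of a skew $\mu\in\Gamma_l$ — is the intended one, carried out in full. The only (harmless) looseness is the same one present in the paper's statement, namely that the "only if" direction requires $X$ (or the pair $X,f$) to be generic enough that $\Lie_Xf$ is not identically zero, which you correctly flag.
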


Recall from \eqref{eq:van_Nijenhuis} that condition $[D_X,l]=0$ holds  if $\mathcal{D}$ is Nijenhuis. 

In the following, for $\mu \in \Gamma(\wedge^m E^*)$, we shall denote by $\mu_l$ the element of $\Gamma(E^* \otimes \wedge^{m-1} E^*)$ given by $\mu_l=0$ if $m=0$, and 
$$
\mu_l(\sigma; \sigma_1, \dots, \sigma_{m-1}) = \mu(l(\sigma), \sigma_1, \dots, \sigma_{m-1}),
$$
for $m \geq 1$, in such a way that $\mu \in \Gamma_l(\wedge^m E^*)$ if and only if $\mu_l \in \Gamma(\wedge^m E^*)$.


\begin{ex}\label{ex:holrevext}
{\em  For a holomorphic vector bundle $\mathcal{E}=(E, \mathcal{D}^{Dolb})$, with $\mathcal{D}^{Dolb}=(D, l, r)$, the dual 1-derivation $\mathcal{D}^{Dolb, *}=(D^*, l^*, r)$ is a Dolbeault 1-derivation on $E^*$ corresponding to the holomorphic structure on the complex vector bundle $(E^*,l^*)$ given by the $T^{0,1}$-connection $\overline{\partial}^*$ dual to $\overline{\partial}$, see \eqref{eq:flat}. Note also that $(E^*,l^*)$ is identified with the bundle of complex-linear functionals $(E,l)\to \C$ via
\begin{equation}\label{eq:Phi1}
\mu \mapsto \mu -\ii \mu_l.
\end{equation}
Let us denote the  complex exterior algebra bundle of $(E^*, l^*)$ by $\wedge_\C E^*$, so that $\Gamma(\wedge^m_\C E^*)$ is the bundle of complex-multilinear alternating $m$-forms on $(E,l)$, that carries a natural extension of $\overline{\partial}^*$.
The map \eqref{eq:Phi1} extends to an isomorphism 
$$
\Phi: \Gamma_l(\wedge^m E^*) \to \Gamma(\wedge^m_\C E^*), \qquad  \Phi(\mu) = \mu - \ii \mu_l,
$$
of $C^\infty(M,\C)$-modules (multiplication by $\ii$ on $\Gamma_l(\wedge^m E^*)$ corresponds to the operation $\mu\mapsto \mu_l$). This isomorphism satisfies
$$
\Phi(D_X^*(\mu))  = \ii \overline{\partial}^*_{X+\ii r(X)} \Phi(\mu) 
$$
showing that the extension of $D^*$ (as in \eqref{eq:extension_f} and \eqref{eq:extension}) matches that of $\overline{\partial}^*$. In particular, 
$\mu \in \Gamma_l(\wedge^m E^*)$ satisfies $D^*(\mu)=0$ if and only if $\mu - \ii \mu_l \in \Gamma(\wedge^m_\C E^*)$ is a holomorphic section. Hence, for each open subset $U\subseteq M$, elements in $\Gamma_l(\wedge^* E^*|_U)$ in the kernel of $D^*$ characterize holomorphic sections of 
$\Gamma(\wedge^m_\C E^*|_U)$ in terms of their real parts. 
}

\hfill $\diamond$
\end{ex}


\subsection{Compatibility with (pre-)Lie algebroids}\label{subsec:IM}
The discussion in this subsection will be used later in the context of Dirac structures and lagrangian splittings of Courant algebroids.

Let us a consider a vector bundle $A\to M$ equipped with an anchor map $\rho: A\to TM$ (i.e., a vector bundle map over the identity map on $M$) and an $\R$-bilinear, skew-symmetric bracket $[\cdot,\cdot]$ on $\Gamma(A)$ such that
$$
[a,fb] = f[a,b] + (\Lie_{\rho(a)}f) b,
$$
for $a, b \in \Gamma(A)$ and $f\in C^\infty(M)$. 
The triple $(A, \rho, [\cdot,\cdot])$ is called a {\em pre-Lie algebroid} \cite{graburb}. A Lie algebroid is a pre-Lie algebroid such that $[\cdot,\cdot]$ satisfies the Jacobi identity. 

Just as commonly done for Lie algebroids (see e.g. \cite{Mac-book}), on a pre-Lie algebroid the anchor $\rho$ and bracket $[\cdot,\cdot]$ can be encoded in a degree-1 derivation $d_A$ of $\Gamma(\wedge A^*)$, or, alternatively, in a linear bivector field $\pi_A$ on the total space of $A^*\to M$.
In terms of $d_A$ and $\pi_A$, Lie algebroids are characterized by the further conditions that $d_A^2=0$ or that $\pi_A$ is a Poisson structure.

\begin{defn}\label{def:LAcomp}
A 1-derivation $\mathcal{D}=(D, l, r)$ on a vector bundle $A\to M$ is compatible with a pre-Lie algebroid structure ($\rho, [\cdot, \cdot])$ if the following equations hold:
\begin{align}
\tag{IM1}\label{eq:im1_lie} \rho \circ l & = r \circ \rho\\
\tag{IM2}\label{eq:im2_lie} \rho(D_X(a)) & = D^r_X(\rho(a))\\
\tag{IM3} \label{eq:im3_lie} l([a,b]) & = [a,l(b)] - D_{\rho(b)}(a)\\
\tag{IM4} \label{eq:im4_lie} D_X([a,b]) & = [a,D_X(b)]+[D_X(a),b] +D_{[\rho(b),X]}(a) - D_{[\rho(a),X]}(b),
\end{align}
for all $X \in \frakx(M)$ and $a, b \in \Gamma(A)$.
\end{defn}

These compatibility equations have the following geometric interpretations. 
Let $K: TA \to TA$ be the linear (1,1)-tensor field corresponding to $\calD$.
\begin{itemize}
    \item The conditions in Def.~\ref{def:LAcomp} hold if and only if the bivector field $\pi_A$ on $A^*$ is compatible with the (1,1)-tensor field $K^\top: T A^* \to T A^*$ in the sense of Magri-Morosi \cite{MM}, see \cite[$\S$~4.3]{Dru} and Example \ref{ex:MM} below. In particular, when $A$ is a Lie algebroid and $\mathcal{D}$ is a Nijenhuis 1-derivation, they hold if and only if the pair $(\pi_A, K^\top)$ is a Poisson-Nijenhuis structure. 
    \item When $A$ is a Lie algebroid, the compatibility in Def.~\ref{def:LAcomp} says that $K:TA \to TA$ is a Lie algebroid morphism with respect to the tangent prolongation Lie algebroid $TA\to TM$ \cite[$\S$ 6]{HT}. Hence 1-derivations compatible with a Lie algebroid 
    are the infinitesimal counterparts of multiplicative (1,1)-tensor fields on Lie groupoids; for this reason (IM1)--(IM4) above are called {\em IM equations} (where IM stands for ``infinitesimally multiplicative''), 
    and 1-derivations satisfying them are also referred to as {\em IM (1,1)-tensors} on Lie algebroids. 
\end{itemize}

The compatibility of a 1-derivation $\calD$ with a pre-Lie algebroid structure on $A$ can be encoded using the dual 1-derivation $\calD^*$ and the operator $d_A$ on $\Gamma(\wedge A^*)$ as follows.

\begin{prop}\label{prop:dual_im}
    A 1-derivation $\calD=(D,l,r)$ is compatible with a pre-Lie algebroid structure $(\rho, [\cdot,\cdot])$ on a vector bundle $A \to M$ if and only if, for all $\mu \in \Gamma_l(\wedge^m A^*)$, the following holds:
    \begin{align}
 \label{eq:dual_im_a}       D_{\rho(a)}^*(\mu) & = i_a d_A(\mu_l) - i_{l(a)} d_A\mu, \\
 \label{eq:dual_im_b}       i_a d_AD_X^*(\mu) & = D_X^*(d_A\mu)(a; \,\cdot) + \mathcal{R}_X(\mu)(a; \, \cdot),
\end{align}
where $\mathcal{R}_X(\mu)(a; \cdot)$ is the $\R$-multilinear skew-symmetric map from $m$-copies of $\Gamma(A)$ to $C^{\infty}(M)$ defined by
\begin{align*}
\mathcal{R}_X(\mu)(a;\sigma_1,\dots, \sigma_m) = & \Lie_X(D_{\rho(a)}^*(\mu)(\sigma_1,\dots, \sigma_m))  + D_{[\rho(a),X]}^*(\mu)(\sigma_1,\dots, \sigma_m)\\
& - \sum_{i=1}^m (-1)^{i+1} D^*_{[\rho(\sigma_i),X]}(\mu)(a, \sigma_1, \dots, \widehat{\sigma_i}, \dots, \sigma_m).
\end{align*}
\end{prop}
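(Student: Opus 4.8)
The strategy is to show that \eqref{eq:dual_im_a}--\eqref{eq:dual_im_b} are nothing but the equations \eqref{eq:im1_lie}--\eqref{eq:im4_lie} transcribed through the duality pairing, using the extension of $D^*$ from \eqref{eq:extension_f}--\eqref{eq:extension} and the standard Cartan-type formula for $d_A$ on $\Gamma(\wedge^\bullet A^*)$. Concretely, I would first record that, by the definition of $d_A$ attached to the pre-Lie algebroid $(\rho,[\cdot,\cdot])$, for $\mu\in\Gamma(\wedge^m A^*)$ and $a, \sigma_1,\dots,\sigma_m\in\Gamma(A)$,
\begin{align*}
(i_a d_A\mu)(\sigma_1,\dots,\sigma_m) = {}& \Lie_{\rho(a)}\mu(\sigma_1,\dots,\sigma_m) - \sum_i (-1)^{i+1}\Lie_{\rho(\sigma_i)}\mu(a,\sigma_1,\dots,\widehat{\sigma_i},\dots,\sigma_m) \\
& - \sum_i (-1)^{i+1}\mu([a,\sigma_i],\sigma_1,\dots,\widehat{\sigma_i},\dots,\sigma_m) + \sum_{i<j}(\pm)\,\mu([\sigma_i,\sigma_j],a,\dots),
\end{align*}
with the analogous identity without the leading contraction. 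Note this is valid for pre-Lie algebroids even though $d_A^2\ne 0$ in general; only the Leibniz rule over $C^\infty(M)$ and the anchor property are used.

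Next, I would verify \eqref{eq:dual_im_a} first, since it is the lower-order identity. Expanding the left-hand side with \eqref{eq:extension} (using $X=\rho(a)$) and the right-hand side with the Cartan formula above, and grouping terms, one finds that the Lie-derivative terms $\Lie_{\rho(a)}(\cdots)$ cancel against the corresponding $\Lie_{r(\rho(a))}$ term \emph{provided} \eqref{eq:im1_lie} holds (so that $\Lie_{r(\rho(a))}=\Lie_{\rho(l(a))}$), while the bracket terms $\mu([a,\sigma_i],\cdots)$ versus $\mu(D_{\rho(\sigma_i)}(a),\cdots)$ and $\mu(l(\sigma_i),\cdots)$ match exactly because of \eqref{eq:im3_lie} in the form $l([a,\sigma_i]) - [l(a),\sigma_i] + D_{\rho(\sigma_i)}(a)=0$, together with the fact that $\mu\in\Gamma_l(\wedge^m A^*)$ lets us slide $l$ across slots. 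Conversely, polarizing \eqref{eq:dual_im_a} in $\mu$ (taking $\mu$ of rank one and evaluating on suitable frames, including $m=1$) recovers (IM1) and (IM3) separately: the $C^\infty(M)$-linear part gives (IM3) and the anomaly in the Leibniz rule — picking up $\Lie_{\rho(a)}f$ versus $\Lie_{r(\rho(a))}f$ — forces (IM1).

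Then I would turn to \eqref{eq:dual_im_b}, which should encode (IM2) and (IM4). The plan is to apply $i_a d_A$ to $D_X^*(\mu)$ using again the Cartan formula — but now one must be careful that $D_X^*(\mu)$ is \emph{not} a genuine form unless $\mu\in\Gamma_l$, which is exactly the hypothesis, and that under $[D_X,l]=0$ (which we are \emph{not} assuming here, so one keeps track of it) $D_X^*$ need not preserve $\Gamma_l$; hence both sides should be read as $\R$-multilinear maps, matching the way $\mathcal R_X$ is defined. Expanding both sides and subtracting, the first-order terms in $X$ produce, after using (IM1) and (IM3) already established, precisely (IM2) (from the anchor/Lie-derivative terms) and (IM4) (from the bracket terms $D_X([\sigma_i,\sigma_j])$ versus $[D_X(\sigma_i),\sigma_j]$ etc.), while $\mathcal R_X(\mu)$ is exactly the collection of correction terms $\Lie_X$, $D^*_{[\rho(a),X]}$, $D^*_{[\rho(\sigma_i),X]}$ that arise from commuting $\Lie_X$ (hidden inside $D_X^*$) past the contractions and Lie derivatives in $d_A$ — i.e. $\mathcal R_X$ is engineered to absorb the failure of $D_X^*$ and $d_A$ to commute on non-tensorial objects. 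For the converse, one again polarizes: evaluating \eqref{eq:dual_im_b} on frames and tracking the $C^\infty(M)$-(non)linearity as in the $m=1$ case isolates (IM2), and the top-order bracket terms isolate (IM4).

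\textbf{Main obstacle.} The conceptual content is light, but the bookkeeping is the real difficulty: \eqref{eq:extension} and the pre-Lie Cartan formula each contribute many Lie-derivative and bracket terms, and the identity only closes after simultaneously using all four IM equations and the $\Gamma_l$-condition to reshuffle $l$ across the slots of $\mu$; moreover one must resist assuming $d_A^2=0$, so the computation cannot be shortcut via a Lie-algebroid cohomology argument. The cleanest route is probably to prove both directions at the level of the generating identity — establish \eqref{eq:dual_im_a}--\eqref{eq:dual_im_b} on decomposable $\mu$ and on a local frame, since $C^\infty(M)$-multilinearity in the remaining slots then extends it — and to treat \eqref{eq:dual_im_b} by first reducing modulo \eqref{eq:dual_im_a}, which kills the lower-order clutter before one confronts the genuinely new (IM2)/(IM4) terms.
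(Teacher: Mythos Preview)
Your plan is correct and follows essentially the same route as the paper: expand $D_X^*$ via \eqref{eq:extension_f}--\eqref{eq:extension} and $d_A$ via the Cartan formula, then match terms against (IM1)--(IM4). The only organizational difference is that the paper proceeds degree by degree --- checking $m=0$ directly to obtain (IM1) from \eqref{eq:dual_im_a} and (IM2) from \eqref{eq:dual_im_b}, then $m=1$ (assuming the previous) to obtain (IM3) and (IM4), with higher degrees following formally --- rather than running the general-$m$ expansion and extracting the anchor conditions from Leibniz anomalies as you propose; the degree-by-degree route is a bit shorter and avoids the bookkeeping you flag as the main obstacle.
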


\begin{proof}
For $\mu = f \in C^{\infty}(M)$, making use of \eqref{defn:Dr_dual} and \eqref{eq:extension_f}, one can check that \eqref{eq:dual_im_a} and \eqref{eq:dual_im_b} are equivalent to 
$$
\Lie_{l(\rho(a)) - \rho(r(a))}f = 0 \;\;\, \text{ and }\,\;\; D_X^*(d_Af)= \rho^*D_X^{r,*}(df),
$$
respectively. Using \eqref{defn:dual_derivation}, one can now check that  \eqref{eq:dual_im_a} and \eqref{eq:dual_im_b} in degree 0 are equivalent to \eqref{eq:im1_lie} and \eqref{eq:im2_lie}. Similarly, for $\mu \in \Gamma(A)$ and assuming  that \eqref{eq:im1_lie} holds, one can check that \eqref{eq:dual_im_a} is equivalent to \eqref{eq:im3_lie}. Finally, under the assumption that \eqref{eq:im1_lie}, \eqref{eq:im2_lie} and \eqref{eq:im3_lie} hold, one verifies that \eqref{eq:dual_im_b} is equivalent to \eqref{eq:im4_lie}. For higher degrees, \eqref{eq:dual_im_a} and \eqref{eq:dual_im_b} follow directly from the IM equations.
\end{proof}

Define 
$$
\Gamma_{\calD}(\wedge^m A^*) = \{\mu \in \Gamma_l(\wedge^m A^*)\,\, | \,\,D^*_X(\mu) =0, \,\, \forall \, X \in \frakx(M)\}.
$$
A direct consequence of Proposition \ref{prop:dual_im} is that, when $\calD$ is compatible with the a pre-Lie algebroid structure on $A$, 
$$
d_A(\Gamma_{\calD}(\wedge^m A^*)) \subseteq \Gamma_{\calD}(\wedge^m A^*),
$$
so $(\Gamma_\calD(\wedge^\bullet A^*),d_A)$ is a subcomplex of $(\Gamma(\wedge^\bullet A^*), d_A)$.

\begin{ex}[Holomorphic Lie algebroids] \label{ex:holLA} {\em Consider a Dolbeault 1-derivation $\mathcal{D}^{Dolb}$ on a vector bundle $A\to M$, so that $\mathcal{A}=(A, \mathcal{D}^{Dolb})$ is a holomorphic vector bundle. It is shown in \cite[$\S$ 6.4]{HT} that a Lie algebroid structure on $A\to M$ compatible with $\mathcal{D}^{Dolb}$ (in the sense of Def.~\ref{def:LAcomp}) is equivalent to a holomorphic Lie algebroid structure on $\mathcal{A}$ (in the sense of \cite[$\S$ 3.1]{LXS}). 
Under this correspondence, following Example~\ref{ex:holrevext}, we see that for each open subset $U\subseteq M$,  the complex 
$(\Gamma_\calD(\wedge^\bullet A^*|_U),d_A)$ is identified with the holomorphic Lie algebroid complex (see \cite[$\S$ 4.4]{LXS}) of $\mathcal{A}$ over $U$.
}
\hfill $\diamond$
\end{ex}
\section{1-Derivations on Courant algebroids}\label{sec:1derCA}

In this section we introduce a notion of compatibility between 1-derivations and Courant algebroids that is the main object of study in this paper.

\subsection{Courant 1-derivations and Courant-Nijenhuis algebroids}

We start by recalling Courant algebroids \cite{LiuXuWe,roytenberg}.

\begin{defn}\label{dfn:courant_alg} \em
A \textit{Courant algebroid} over a manifold $M$ is a vector bundle $E\rightarrow M$ together with a bundle map $\an: E \rightarrow TM$  (called the \textit{anchor}), a pseudo-euclidean metric $\< \cdot, \cdot\>$ (i.e, a fibrewise nondegenerate symmetric bilinear form), and an  $\R$-bilinear bracket $\Cour{\cdot, \cdot}: \Gamma(E)\times \Gamma(E)\rightarrow \Gamma(E)$ such that, for all $\sigma_1, \sigma_2, \sigma_3 \in \Gamma(E)$ and $f \in C^{\infty}(M),$ the following hold:
\begin{align}
\tag{C1}\label{C1} & \Cour{\sigma_1,\Cour{\sigma_2,\sigma_3}}  = \Cour{\Cour{\sigma_1,\sigma_2},\sigma_3} +\Cour{\sigma_2,\Cour{\sigma_1,\sigma_3}}  \\
\tag{C2}\label{C2} & \an(\Cour{\sigma_1,\sigma_2})  =[\an(\sigma_1),\an(\sigma_2)] \\
\tag{C3}\label{C3} & \Cour{\sigma_1,f\sigma_2} = f\Cour{\sigma_1,\sigma_2} + (\Lie_{\an(\sigma_1)}f)\,\sigma_2 \\
\tag{C4}\label{C4}& \Cour{\sigma_1,\sigma_2} +  \Cour{\sigma_2,\sigma_1}  = \an^*(d\< \sigma_1, \sigma_2\>)  \\
\tag{C5}\label{C5} & \; \Lie_{\an(\sigma_1)}\< \sigma_2, \sigma_3\>   = \< \Cour{\sigma_1,\sigma_2}, \sigma_3\> + \< \sigma_2, \Cour{\sigma_1, \sigma_3} \> 
\end{align}
where $\an^*:T^*M \rightarrow E^* \simeq E$ is the map dual to the anchor $\an$, and the isomorphism $E^* \simeq E$ is given by $\< \cdot, \cdot\>.$ We refer to $\Cour{\cdot, \cdot}$ as the {\it Courant bracket}.
\end{defn}

The following is an important class of examples  \cite{severaletters,severaweinstein}.
 
\begin{ex}[Exact Courant algebroids]  \em \label{ex:Htwisted}
Any closed 3-form $H\in \Omega^3(M)$ defines a Courant algebroid structure on $E= \T M:= TM\oplus T^*M$, with anchor map $\an= \pr_{TM}$, symmetric pairing 
$$
\<(X,\alpha), (Y, \beta)\> = \beta(X) + \alpha(Y), 
$$
and the {\em $H$-twisted Courant bracket} 
$$
\Cour{(X,\alpha), (Y,\beta)} = ([X,Y], \Lie_X \beta - i_Y d\alpha + i_Yi_X H). 
$$
When $H=0$, one refers to this structure  as the {\em standard Courant algebroid} on $\T M$.
 \hfill $\diamond$


\end{ex}




Let $(E, \an, \<\cdot,\cdot\>, \Cour{\cdot, \cdot})$ be a Courant algebroid.

\begin{defn}\label{def:Cder} \em  A {\em Courant 1-derivation} is a 1-derivation $\mathcal{D} = (D,l,r)$ on the vector bundle $E\to M$ such that $\mathcal{D}^*=\mathcal{D}$ (under the identification $E \cong E^*$ given by the pairing) and the following compatibility equations are satisfied:
\begin{align}
 \tag{CN1}\label{eq:im1} &    \an \circ l = r \circ \an,\\
 \tag{CN2}\label{eq:im2} &    \an (D_X(\sigma))  = D_X^r(\an(\sigma)),\\
 \tag{CN3}\label{eq:im3} &     l(\Cour{ \sigma_1, \sigma_2 })= \Cour{ \sigma_1,l(\sigma_2) } - D_{\an(\sigma_2)}(\sigma_1) - \an^*( C(\sigma_1,\sigma_2)),\\ 
 \tag{CN4}\label{eq:im4} &    D_X(\Cour{ \sigma_1,\sigma_2 })  = \Cour{ \sigma_1,D_X(\sigma_2) } - \Cour{ \sigma_2,D_X(\sigma_1)} + D_{[\an(\sigma_2),X]}(\sigma_1)\\
 \nonumber   & \qquad  \qquad \qquad \quad - D_{[\an(\sigma_1),X]}(\sigma_2) - \an^*(i_X\,dC(\sigma_1,\sigma_2)),
\end{align}
for all $\sigma_1, \sigma_2 \in \Gamma(E)$ and $X \in \mathfrak{X}(M),$ where $C(\sigma_1,\sigma_2):= \<D_{(\cdot)}(\sigma_1) ,\sigma_2 \>  \in \Omega^1(M)$.
\end{defn}

Equations \eqref{eq:im1}-\eqref{eq:im4} are called \textit{Courant compatibility equations} for $\mathcal{D}$.
Note that they impose a linear condition on 1-derivations, so  linear combinations of Courant 1-derivations are still Courant 1-derivations.


\begin{defn} \em \label{defCN}
A {\em Courant-Nijenhuis} 1-derivation is a Courant 1-derivation  that is also a Nijenhuis 1-derivation, i.e., satisfies the Nijenhuis equations \eqref{eq:van_Nijenhuis}. A Courant algebroid equipped with a Courant-Nijenhuis 1-derivation is a  {\it Courant-Nijenhuis algebroid}. 
\end{defn}

\begin{ex} {\em  
Consider a 1-derivation $\mathcal{D}$ defined by a connection $\nabla$ on $E\to M$, as in Example~\ref{ex:connection}.
If it is a Courant 1-derivation, then the anchor $\an$ must be trivial (by (CN1)), so that, as a Courant algebroid, $E$ is a bundle of quadratic Lie algebras. In this case, $\nabla$ is a Courant 1-derivation if and only if it is symmetric (i.e., $\nabla=\nabla^*$ with respect to the pseudo-euclidean metric) as well as
 compatible with the metric and fibrewise Lie bracket (by (CN4)):
$$
\Lie_X\< \sigma_1,\sigma_2\>=\<\nabla_X \sigma_1,\sigma_2\> + \<\sigma_1,\nabla_X \sigma_2\>, \qquad 
 \nabla_X(\Cour{ \sigma_1,\sigma_2 })  = \Cour{ \nabla_X\sigma_1,\sigma_2 } + \Cour{ \sigma_1,\nabla_X \sigma_2},
$$
for $X\in \mathfrak{X}(M)$. 

The Nijenhuis condition on $\mathcal{D}$ amounts to the flatness of $\nabla$. In this case, assuming that $M$ is connected and viewing its universal cover $\widetilde{M}$ as a $\pi_1(M)$-principal bundle over $M$, $E$ is of the form $(\widetilde{M}\times \mathfrak{d})/\pi_1(M)$, where $\mathfrak d$ is a quadratic Lie algebra equipped with a  representation of $\pi_1(M)$ that preserves bracket and pairing.
}
\hfill $\diamond$

\end{ex}

The next example from \cite{BDN} is the original motivation for the 
 Courant compatibility equations (CN1)--(CN4).

\begin{ex}\label{ex:Dr} \em For a (1,1)-tensor field $r:TM \rightarrow TM $, consider the 1-derivations $\mathcal{D}^r$ and $\mathcal{D}^{r,*}$ from  \eqref{defn:Dr} and \eqref{defn:Dr_dual}. Then, setting
\begin{equation*}
\D^r := (D^r, D^{r,*}),
\end{equation*}
we obtain a 1-derivation 
\begin{equation}\label{eq:doubleDr}
\mathcalbb{D}^r= (\D^r, (r,r^*), r)
\end{equation}
on  $\T M$
that is a Courant 1-derivation with respect to the standard Courant algebroid structure (this is verified in the proof of \cite[Lem.~6.1]{BDN}). 
More generally, one can check that  $\mathcalbb{D}^r$ is a Courant 1-derivation with respect to an $H$-twisted Courant bracket as long as the closed 3-form $H\in \Omega^3(M)$ is compatible with $r$ in the sense of \cite[Def.~5.1]{BDN}, i.e., 
\begin{itemize}
\item the tensor field $H_r\in \Gamma(T^*M\otimes \wedge^2 T^*M)$, $H_r(X_1;, X_2, X_3):= H(r(X_1), X_2, X_3)$, is skew-symmetric, that is, $H_r\in \Omega^3(M)$;
\item $d H_r =0$.
\end{itemize}

Moreover,  $r$ is a Nijenhuis operator if and only if $\mathcalbb{D}^r$ is a Nijenhuis 1-derivation \cite[Lem.~6.1]{BDN}, in which case it defines a
 Courant-Nijenhuis structure on $\T M$. One can also directly verify that $r^2=-\mathrm{Id}_{TM}$ if and only if 
$\mathcalbb{D}^r$ satisfies the almost complex equations in \eqref{eq:square_minus_id}. When $r$ is a complex structure on $M$, $\mathcalbb{D}^r$ is the Dolbeault 1-derivation codifying the holomorphic structure on $\T M \to M$ (see Example~\ref{ex:hol}), with corresponding linear complex structure given by
\begin{equation}\label{eq:Kr}
K_r: = (r^{tg}, r^{ctg}): T (\T M) \to T (\T M). 
\end{equation}
\hfill $\diamond$
\end{ex}

We will describe in $\S$ \ref{sec:T+T*} modifications of the last example yielding more general Courant 1-derivations on $\T M$.


As we will see in $\S$ \ref{sec:hol}, for Dolbeault 1-derivations (see Example \ref{ex:hol}), the compatibility with Courant structures yields {\em holomorphic Courant algebroids}.

\begin{rmk}[On the compatibility equations]{\em 
A natural issue concerning Courant 1-deriva\-tions is whether the Courant compatibility equations  for  $\mathcal{D}$ in Def.~\ref{def:Cder} admit a geometric interpretation in terms of the corresponding linear (1,1)-tensor field $K$ (c.f. the discussion after Def.~\ref{def:LAcomp}). Although a satisfactory answer does not seem evident (in contrast with Lie algebroids, one can check that, in general, $K$ does not define a Courant morphism \cite{BIS} of the tangent Courant algebroids \cite{BouZaa}),
some key properties of $K$ will be presented  in Prop.~\ref{prop:Dirac_K} below. From another perspective, it would be interesting to see if the super-geometric viewpoint on Courant algebroids \cite{roytenberg} can shed light on the Courant compatibility equations.
}
\hfill $\diamond$
\end{rmk}

\subsection{Invariant Dirac structures}\label{subsec:dirac}
Much of the importance of Courant algebroids lies in their Dirac structures. We now consider Dirac structures invariant by Courant 1-derivations.

Let $\mathcal{D}=(D, l, r)$ be a Courant 1-derivation on a Courant algebroid $E\to M$. We will be concerned with lagrangian subbundles $(L\to M)\subseteq (E\to M)$ (i.e., $L=L^\perp$) that are $\mathcal{D}$-invariant in the sense of Def.~\ref{def:compat}:
\begin{itemize}
    \item $l(L) \subset L$, and 
    \item $D_X(\Gamma(L)) \subset \Gamma(L),\; $ for all $X\in \mathfrak{X}(M).$ 
\end{itemize}

For lagrangian subbundles, these conditions can be equivalently expressed as follows. Consider the symmetric 2-form $S\in \Gamma(S^2E^*)$,
$$
S(\sigma_1,\sigma_2) = \< l(\sigma_1), \sigma_2\>.
$$
(Recall that $l=l^*$ since $\mathcal{D}^*=\mathcal{D}$.) Let $C:\Gamma(E) \times \Gamma(E) \to \Omega^1(M)$ be the map that appeared in the definition of Courant 1-derivations,  
$$
C(\sigma_1,\sigma_2) = \<D_{(\cdot)}(\sigma_1), \sigma_2\>.
$$ 
For a lagrangian subbundle $L \subset E$, the restriction of $S$ to $L$ defines an element $S_L \in \Gamma(S^2 L^*)$ with the property that 
$l(L)\subseteq L$ if and only if $S_L=0$.
Assuming that this holds, the restriction of $C$ to sections of $L$ is tensorial, i.e., it  defines an element 
$$
C_L \in \Gamma(\wedge^2 L^* \otimes T^*M)
$$ 
called the \textit{concomitant of $L$ and $\mathcal{D}$}. The following result can be directly verified.

\begin{lem}\label{lem:lagr_comp}
For a Courant 1-derivation $\mathcal{D}$, 
a lagrangian subbundle $L\subseteq E$ is $\calD$-invariant if and only if 
$$
S_L=0, \; \mbox{ and } \; C_L=0.
$$
\end{lem}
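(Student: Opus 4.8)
The plan is to prove Lemma~\ref{lem:lagr_comp} by unwinding the two conditions in Definition~\ref{def:compat} for a \emph{lagrangian} subbundle $L$, and showing each is equivalent to the vanishing of the corresponding algebraic/tensorial datum. The first step concerns $l$: since $\mathcal{D}^* = \mathcal{D}$ gives $l = l^*$, for $\sigma_1, \sigma_2 \in \Gamma(L)$ we have $S(\sigma_1,\sigma_2) = \langle l(\sigma_1), \sigma_2\rangle$. If $l(L)\subseteq L$, then $l(\sigma_1)\in\Gamma(L)$ pairs to zero with $\sigma_2\in\Gamma(L)$ because $L$ is lagrangian, so $S_L = 0$. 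Conversely, if $S_L = 0$, then $\langle l(\sigma_1), \sigma_2\rangle = 0$ for all $\sigma_2\in\Gamma(L)$, which says $l(\sigma_1)\in L^\perp = L$; hence $l(L)\subseteq L$. This is the easy half and is essentially a one-line argument once one observes $L = L^\perp$.

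The second step concerns the condition $D_X(\Gamma(L))\subseteq\Gamma(L)$, assuming $l(L)\subseteq L$ already holds (so that $C_L$ is well-defined and tensorial as stated in the text preceding the lemma). For $\sigma_1, \sigma_2 \in \Gamma(L)$ and $X\in\mathfrak{X}(M)$, the quantity $C_L(X;\sigma_1,\sigma_2) = \langle D_X(\sigma_1),\sigma_2\rangle$. If $D_X(\Gamma(L))\subseteq\Gamma(L)$, then $D_X(\sigma_1)\in\Gamma(L)$ pairs to zero with $\sigma_2\in\Gamma(L)$, so $C_L = 0$. For the converse, suppose $C_L = 0$; then for $\sigma_1\in\Gamma(L)$ and all $\sigma_2\in\Gamma(L)$ we get $\langle D_X(\sigma_1),\sigma_2\rangle = 0$, so $D_X(\sigma_1)\in L^\perp = L$, giving $D_X(\Gamma(L))\subseteq\Gamma(L)$. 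Combining the two steps, $\mathcal{D}$-invariance of $L$ is equivalent to $S_L = 0$ and $C_L = 0$.

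The one point that needs a little care, rather than being a genuine obstacle, is the logical ordering: the tensoriality of $C$ restricted to $\Gamma(L)$ — and hence the very definition of $C_L$ — relies on $l(L)\subseteq L$, so the statement ``$S_L = 0$ and $C_L = 0$'' should be read as: $S_L = 0$, and (given this) $C_L = 0$. I would make this explicit in the write-up, noting that the text already established $C_L\in\Gamma(\wedge^2 L^*\otimes T^*M)$ under the hypothesis $l(L)\subseteq L$. Beyond that, the proof is a direct consequence of the non-degeneracy of the pairing together with $L = L^\perp$, which is exactly why the text can claim it ``can be directly verified''; no use of the Courant compatibility equations \eqref{eq:im1}--\eqref{eq:im4} or of the bracket is needed at all.

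Concretely, the write-up will be short:

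\begin{proof}
Since $\mathcal{D}^* = \mathcal{D}$ we have $l = l^*$. Let $\sigma_1,\sigma_2\in\Gamma(L)$ and $X\in\mathfrak{X}(M)$.

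If $l(L)\subseteq L$, then $l(\sigma_1)\in\Gamma(L)$, so $S_L(\sigma_1,\sigma_2) = \langle l(\sigma_1),\sigma_2\rangle = 0$ because $L = L^\perp$; hence $S_L = 0$. Conversely, if $S_L = 0$ then $\langle l(\sigma_1),\sigma_2\rangle = 0$ for all $\sigma_2\in\Gamma(L)$, so $l(\sigma_1)\in L^\perp = L$, whence $l(L)\subseteq L$.

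Assume now $l(L)\subseteq L$, so that $C_L\in\Gamma(\wedge^2 L^*\otimes T^*M)$ is defined. If $D_X(\Gamma(L))\subseteq\Gamma(L)$, then $D_X(\sigma_1)\in\Gamma(L)$, so $C_L(X;\sigma_1,\sigma_2) = \langle D_X(\sigma_1),\sigma_2\rangle = 0$; hence $C_L = 0$. Conversely, if $C_L = 0$, then $\langle D_X(\sigma_1),\sigma_2\rangle = 0$ for all $\sigma_2\in\Gamma(L)$, so $D_X(\sigma_1)\in L^\perp = L$, i.e.\ $D_X(\Gamma(L))\subseteq\Gamma(L)$.

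Combining the two equivalences, $L$ is $\mathcal{D}$-invariant if and only if $S_L = 0$ and $C_L = 0$.
\end{proof}
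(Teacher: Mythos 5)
Your proof is correct and is exactly the direct verification the paper alludes to (the paper offers no written proof, stating only that the result ``can be directly verified''): both halves reduce to non-degeneracy of the pairing together with $L=L^{\perp}$, and you rightly flag that $C_L$ is only defined once $S_L=0$ is in place. Nothing further is needed.
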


Suppose that $\mathcal{D}$ is a Courant-Nijenhuis 1-derivation on $E$, so that $(E,\mathcal{D})$ is a Courant-Nijenhuis algebroid.

\begin{defn} \em \label{defDN}
 A Dirac structure $L\subset E$ that is $\mathcal{D}$-invariant (equivalently, such that $S_L=0$ and $C_L=0$)
is called a \textit{Dirac-Nijenhuis structure}. \end{defn}

\begin{ex}[Poisson-Nijenhuis structures] \label{ex:MM}\em 
Given a (1,1)-tensor field $r: TM \to TM$, consider the associated Courant 1-derivation on $\T M$ (with the standard Courant bracket) given by 
$
\mathcalbb{D}^r,
$ 
as in Example~\ref{ex:Dr}. Its invariant lagrangian subbundles are precisely those considered in \cite[$\S$ 3.3]{BDN}. In particular, a lagrangian subbundle given by the graph of a bivector field $\pi \in \frakx^2(M)$, 
$$
L_\pi = \{(\pi^\sharp(\xi), \xi) \in \T M\,\, | \,\, \xi \in T^*M\},
$$ 
where $\pi^\sharp: T^*M \to TM$ is the map obtained via contraction, is $\mathcalbb{D}^r$-invariant if and only if  $\pi$ and $r$ satisfy
\begin{equation}\label{eq:compatpir}
\; r\circ \pi^\sharp = \pi^\sharp \circ r^*, \; \mbox{ and } \;  \pi^\sharp \circ D^{r,*}_X(\alpha) - D^{r}_X\circ \pi^\sharp(\alpha)  = \pi^\sharp(\Lie_X r^*\alpha - \Lie_{r(X)}\alpha)-(\Lie_{\pi^\sharp(\alpha)}r) (X)=0
\end{equation}
for all $X\in \mathfrak{X}(M)$, $\alpha\in \Omega^1(M)$.
The expression in the second condition is known as the {\em Magri-Morosi concomitant} of $r$ and $\pi$ \cite{MM}, see \cite[$\S$ 3.1]{BDN}. 
From the viewpoint of Lemma \ref{lem:lagr_comp}, using the natural identification $L_\pi \cong T^*M$,  we have that 
$$
S_{L_\pi}(\alpha,\beta) = \pi(r^*\alpha,\beta) -  \pi(\alpha, r^*\beta), \quad \mbox{ and } \quad
C_{L_\pi}(\alpha,\beta) = \<\beta, \pi^\sharp \circ D^{r,*}_{(\cdot)}(\alpha) - D^{r}_{(\cdot)}\circ \pi^\sharp(\alpha)\>.
$$
(The alternative formulation of the vanishing of the Magri-Morosi concomitant in terms of $C_{L_\pi}$ goes back to \cite{KS-Magri90} and is now more frequent in the literature.)

Recall that $\pi$ is Poisson if and only $L_\pi$ is a Dirac structure, and $r$ is a Nijenhuis operator if and only if $\mathcalbb{D}^r$ is a Courant-Nijenhuis 1-derivation, so $L_\pi$ is a Dirac-Nijenhuis structure in $(\T M, \mathcalbb{D}^r)$ if and only if $(\pi,r)$ is a Poisson-Nijenhuis structure \cite[Ex.~3.11]{BDN}.
 \hfill $\diamond$
\end{ex}

We refer to \cite{BDN} for more on Dirac-Nijenhuis structures in the specific Courant-Nijenhuis algebroid $\T M$ of the previous example.

It is well known that if $L$ is a Dirac structure in a Courant algebroid $E$, then the restrictions of the anchor and Courant bracket make $L$ into a Lie algebroid. In the presence of a Courant 1-derivation $\calD$ for which $L$ is $\calD$-invariant, it is a straightforward verification that  the 1-derivation on $L$ obtained by restriction of $\calD$ is compatible with its Lie algebroid structure, in the sense of Def.~\ref{def:LAcomp}. Following the discussion in $\S$ \ref{subsec:IM}, in terms of linear (1,1)-tensor fields we have





\begin{prop}\label{prop:Dirac_K}
Consider a Courant 1-derivation $\mathcal{D}$ on a Courant algebroid $E$, and let $K: TE\to TE$ be the corresponding linear (1,1)-tensor field. Then a lagrangian subbundle $L \subset E$ is $\mathcal{D}$-invariant if and only if $K(TL) \subset TL$. If $L$ is, in addition, a Dirac structure then $K|_{TL}: TL \to TL$ is a Lie algebroid morphism, where $TL \to TM$ is the tangent prolongation Lie algebroid. 
\end{prop}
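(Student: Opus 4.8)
The plan is to reduce the statement to facts already established in the excerpt, since almost all the work has been packaged into earlier results. For the first claim --- that a lagrangian subbundle $L\subseteq E$ is $\mathcal{D}$-invariant if and only if $K(TL)\subseteq TL$ --- I would simply invoke the functorial correspondence between 1-derivations and linear $(1,1)$-tensor fields recalled in $\S$\ref{subsec:1,1}, which states precisely that a subbundle $(F\to M)\subseteq(E\to M)$ is $\mathcal{D}$-invariant (in the sense of Def.~\ref{def:compat}) exactly when $K(TF)\subseteq TF$. This applies verbatim with $F=L$; there is no lagrangian hypothesis needed for this equivalence, so this part is immediate.

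For the second claim, I would argue in two steps. First, recall the standard fact that a Dirac structure $L\subset E$ inherits a Lie algebroid structure from the restriction of the anchor $\an$ and the Courant bracket $\Cour{\cdot,\cdot}$; this is cited in the paragraph preceding the proposition. Second, when $L$ is also $\mathcal{D}$-invariant, the first part of the proposition gives $K(TL)\subseteq TL$, so that $l(L)\subseteq L$ and $D_X(\Gamma(L))\subseteq\Gamma(L)$, and hence $\mathcal{D}$ restricts to a 1-derivation $\mathcal{D}|_L=(D|_{\Gamma(L)},l|_L,r)$ on $L$, as noted after Def.~\ref{def:compat}. I would then verify that this restricted 1-derivation is compatible with the Lie algebroid structure on $L$ in the sense of Def.~\ref{def:LAcomp}, i.e.\ that it satisfies the IM equations \eqref{eq:im1_lie}--\eqref{eq:im4_lie}. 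These follow by restricting the Courant compatibility equations \eqref{eq:im1}--\eqref{eq:im4} to sections of $L$: \eqref{eq:im1} gives \eqref{eq:im1_lie} directly, \eqref{eq:im2} gives \eqref{eq:im2_lie} directly, and for \eqref{eq:im3_lie} and \eqref{eq:im4_lie} one uses that the terms involving $\an^*(C(\sigma_1,\sigma_2))$ and $\an^*(i_X\,dC(\sigma_1,\sigma_2))$ drop out because $C_L=0$ (Lemma~\ref{lem:lagr_comp}) whenever $\sigma_1,\sigma_2\in\Gamma(L)$ --- this is exactly the point of introducing the concomitant and the reason the $\an^*$-correction terms are built into Def.~\ref{def:Cder}. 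Once the restricted 1-derivation is known to be an IM $(1,1)$-tensor on the Lie algebroid $L$, the bullet points after Def.~\ref{def:LAcomp} (quoting \cite[$\S$6]{HT}) say exactly that the associated linear $(1,1)$-tensor field $K|_{TL}:TL\to TL$ is a Lie algebroid morphism with respect to the tangent prolongation Lie algebroid $TL\to TM$, which is the desired conclusion.

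The only point requiring a small argument is that the linear $(1,1)$-tensor field on $L$ associated to the restricted 1-derivation $\mathcal{D}|_L$ is indeed $K|_{TL}$; this is the naturality/functoriality of the 1-derivation $\leftrightarrow$ linear tensor correspondence under restriction to invariant subbundles, which is part of \cite[Thm.~2.1]{BDN} as quoted in $\S$\ref{subsec:1,1} (``natural functorial properties''). I expect the only mild obstacle to be bookkeeping: one must check that the skew-symmetrization conventions in the IM equations of Def.~\ref{def:LAcomp} match the form of \eqref{eq:im3} and \eqref{eq:im4} after restriction, and that the Courant bracket restricted to $L$ really is the Lie bracket used to define $d_L$; but since $L$ being Dirac makes $\Cour{\cdot,\cdot}|_{\Gamma(L)}$ skew-symmetric (by \eqref{C4}, as $\an^*(d\langle\sigma_1,\sigma_2\rangle)=0$ on a lagrangian subbundle) and the restricted anchor is the Lie algebroid anchor, this identification is routine. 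No genuinely hard step is anticipated; the proposition is essentially a corollary of Prop.~\ref{prop:dual_im}/Def.~\ref{def:LAcomp} together with the functoriality of the correspondence, and the proof amounts to assembling these ingredients and observing that $C_L=0$ kills the anchor-correction terms.
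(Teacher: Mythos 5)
Your proposal is correct and follows essentially the same route as the paper: the first claim is quoted from the correspondence between 1-derivations and linear $(1,1)$-tensor fields (\cite[Thm.~2.1]{BDN}), and the second reduces to checking that the restricted 1-derivation satisfies the IM equations of Def.~\ref{def:LAcomp}, which are equivalent to $K|_{TL}$ being a Lie algebroid morphism by \cite{HT}. The only difference is that you spell out the verification the paper calls ``straightforward'' --- namely that $C_L=0$ kills the $\an^*$-terms in (CN3)--(CN4) and that \eqref{C4} makes the restricted bracket skew-symmetric --- which is a correct and welcome elaboration.
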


\begin{proof}
As recalled in $\S$ \ref{subsec:1,1}, the fact that $L$ is $\mathcal{D}$-invariant is equivalent to $K(TL) \subset TL$, see \cite[Thm.~2.1]{BDN}. 
When $L$ is a Dirac structure, the restricted 1-derivation satisfies the IM equations from Def.~\ref{def:LAcomp}, which are equivalent to $K|_{TL}: TL \to TL$ being a Lie-algebroid morphism \cite[$\S$~5.2]{HT}.
\end{proof}

\section{Courant 1-derivations on $TM\oplus T^*M$}\label{sec:T+T*}
 We now present other examples of Courant 1-derivations on  $\T M =TM \oplus T^*M$ with respect to twisted Courant brackets, extending Example~\ref{ex:Dr}

\subsection{Courant 1-derivations from pseudo-Riemmanian metrics}

A direct calculation shows that a 1-derivation $\mathcal{D}=(D,l,r)$ on $\T M$ satisfying $\mathcal{D}^*=\mathcal{D}$ and equations \eqref{eq:im1} and \eqref{eq:im2} must have the form 
\begin{equation}\label{dfn:std_der}
l= (r,r^*+g^\flat), \,\,\,\, D = (D^r, D^{r,*} + \Sigma),
\end{equation}
where $g^\flat: TM \to T^*M$, $g^\flat(X)=i_Xg$, is the map obtained by contraction of a symmetric bilinear form $g$, and $\Sigma: \Gamma(TM) \to \Gamma(T^*M\otimes T^*M)$ is an $\R$-linear map satisfying
\begin{align}
\label{eq:Sigma_prop1}\Sigma_X(fY) & = f\,\Sigma_X(Y) + (\Lie_X f) \,g^\flat(Y),\\ 
\label{eq:Sigma_prop2}\Lie_X g(Y,Z) & = \<\Sigma_X(Y), Z\> + \<Y, \Sigma_X(Z)\>,
\end{align}
where $f\in C^\infty(M)$, $X,Y, Z \in \mathfrak{X}(M)$, and $\Sigma_X: \Gamma(TM)\to \Gamma(T^*M)$ is given by $\Sigma_X(Y)= i_X(\Sigma(Y))$.
In particular, any Courant 1-derivation $\calD=(D, l,r)$ on $\T M$ is determined by the data $r$, $g$ and $\Sigma$ as above, via \eqref{dfn:std_der}.
When $g$ is non-degenerate, i.e., when $g$ is a {\em pseudo-Riemannian metric}, note that $\nabla_X = (g^\flat)^{-1}\circ \Sigma_X$ defines a metric connection on $M$ (i.e., $\nabla g=0$). 

In the following, { we shall assume that $g$ is a pseudo-Riemannian metric and $\nabla$ is its Levi-Civita connection}. In this case  $g$ and $\nabla$ give rise to a 1-derivation $\mathcalbb{D}^g = (\D^g, (0,g^\flat), 0)$ of $\T M$, where 
$$
\D^g_X((Y,\beta)) = (0, g^\flat(\nabla_X Y)),
$$ 
by setting $r=0$ in \eqref{dfn:std_der}. The corresponding linear (1,1)-tensor field on $q: \T M \to M$, 
\begin{equation}\label{eq:K_g}
K_g: T(\T M)\to T(\T M),
\end{equation}
is described as follows. The connection $\nabla$ defines a horizontal distribution $\mathrm{Hor}\subset T (\T M)$, complementary to the vertical distribution $\mathrm{Ver}=\mathrm{ker}(Tq)= q^* \T M$. With respect to this splitting of $T (\T M)$, $K_g$ vanishes of on $\mathrm{Hor}$ and acts as $(0, g^\flat)$ on $\mathrm{Ver}$. 

\begin{lem}
The 1-derivation $\mathcalbb{D}^g$  is a Courant 1-derivation of $\T M$ for any $H$-twisted Courant bracket.
\end{lem}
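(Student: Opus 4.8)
The plan is to verify directly that $\mathcalbb{D}^g = (\D^g, (0,g^\flat), 0)$ satisfies $\mathcal{D}^* = \mathcal{D}$ together with the four Courant compatibility equations \eqref{eq:im1}--\eqref{eq:im4} for an arbitrary $H$-twisted Courant bracket on $\T M$. Since here $r = 0$, equations \eqref{eq:im1} and \eqref{eq:im2} become $\an \circ l = 0$ and $\an(D_X(\sigma)) = 0$; both are immediate because $l = (0, g^\flat)$ and $\D^g_X((Y,\beta)) = (0, g^\flat(\nabla_X Y))$ take values in the cotangent summand $T^*M = \ker(\an)$. The self-duality $\mathcal{D}^* = \mathcal{D}$ follows from the fact that $g^\flat$ is self-adjoint with respect to the pairing $\<(X,\alpha),(Y,\beta)\> = \beta(X) + \alpha(Y)$ and that $\nabla$ is metric: unwinding \eqref{defn:dual_derivation} with $l^* = l$ and $r = 0$ reduces the claim to $\Lie_X g(Y,Z) = g(\nabla_X Y, Z) + g(Y, \nabla_X Z)$, which is exactly $\nabla g = 0$. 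Equivalently, this is the specialization of \eqref{eq:Sigma_prop1}--\eqref{eq:Sigma_prop2} to $\Sigma_X = g^\flat \circ \nabla_X$, already recorded in the text.

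The substantive computations are \eqref{eq:im3} and \eqref{eq:im4}. For \eqref{eq:im3}, with $r = 0$ the equation reads $0 = \Cour{\sigma_1, l(\sigma_2)} - \D^g_{\an(\sigma_2)}(\sigma_1) - \an^*(C(\sigma_1,\sigma_2))$. Writing $\sigma_i = (X_i,\alpha_i)$, we have $l(\sigma_2) = (0, g^\flat(X_2))$, and the $H$-twisted bracket gives $\Cour{(X_1,\alpha_1),(0,g^\flat(X_2))} = (0,\, \Lie_{X_1}(g^\flat(X_2)))$ — the $i_Y d\alpha$ and $i_Y i_X H$ terms drop since the $TM$-component of the second slot vanishes. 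On the other hand $\D^g_{\an(\sigma_2)}(\sigma_1) = \D^g_{X_2}((X_1,\alpha_1)) = (0, g^\flat(\nabla_{X_2} X_1))$, and $C(\sigma_1,\sigma_2) = \<\D^g_{(\cdot)}(\sigma_1),\sigma_2\> = \<(0, g^\flat(\nabla_{(\cdot)} X_1)), (X_2,\alpha_2)\> = g(\nabla_{(\cdot)} X_1, X_2)$, so $\an^*(C(\sigma_1,\sigma_2)) = (0,\, g(\nabla_{(\cdot)}X_1, X_2))$ viewed in $T^*M$ via $\an^* = $ inclusion. Thus \eqref{eq:im3} reduces to the identity of $1$-forms $\Lie_{X_1}(g^\flat(X_2)) = g^\flat(\nabla_{X_2} X_1) + g(\nabla_{(\cdot)}X_1, X_2)$, i.e., for every $Y$, $(\Lie_{X_1} g^\flat(X_2))(Y) = g(\nabla_{X_2}X_1, Y) + g(\nabla_Y X_1, X_2)$. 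Expanding the left side via $\Lie_{X_1}(g^\flat(X_2))(Y) = \Lie_{X_1}(g(X_2,Y)) - g(X_2, [X_1,Y])$ and using $\nabla g = 0$ to write $\Lie_{X_1}(g(X_2,Y)) = g(\nabla_{X_1}X_2, Y) + g(X_2, \nabla_{X_1}Y)$ turns the claim into $g(\nabla_{X_1}X_2 - \nabla_{X_2}X_1 - [X_1,Y]\text{-terms}, \cdot) = \dots$, which collapses to the vanishing of torsion of $\nabla$ — precisely the defining property of the Levi-Civita connection.

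For \eqref{eq:im4}, with $r = 0$ and hence $\an^*(i_X\, dC(\sigma_1,\sigma_2))$ the only $H$-sensitive term a priori, one writes out both sides using the same bracket formulas. Every term lands in $T^*M$, so one is comparing $1$-forms (or rather sections of $T^*M$ depending on an extra vector-field slot $X$); the $H$-contributions from $\Cour{\sigma_i, D_X(\sigma_j)}$ vanish because $D_X(\sigma_j)$ has zero $TM$-component, and likewise $H$ does not enter $dC$ since $C$ itself is built from $\D^g$ which has no $H$-dependence. Hence \eqref{eq:im4} is $H$-independent and reduces to a curvature-type identity for $\nabla$; I expect it to follow from $\nabla$ being Levi-Civita by combining the Leibniz rule \eqref{eq:Leibniz} (which $\D^g$ satisfies by construction, cf. \eqref{eq:Sigma_prop1}) with the torsion-free and metric conditions, exactly as in the verification that $\mathcalbb{D}^r$ is a Courant $1$-derivation in \cite[Lem.~6.1]{BDN}. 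The main obstacle is purely bookkeeping: keeping track of the several $\nabla$-terms in \eqref{eq:im4} and confirming that the would-be $H$-dependent pieces genuinely cancel; conceptually, the only facts used are $\nabla g = 0$, $\mathrm{Tor}(\nabla) = 0$, and that $l$ and $\D^g$ are valued in $\ker(\an) = T^*M$, which makes the twisted bracket collapse to its untwisted, purely Lie-derivative part on all relevant inputs. An efficient alternative is to invoke the linear $(1,1)$-tensor picture: $K_g$ is the operator that is zero on $\mathrm{Hor}$ and $(0,g^\flat)$ on $\mathrm{Ver}$, and one can check the four equations simultaneously by testing against vertical and $\nabla$-horizontal lifts; I would present the component computation above as it is the most transparent.
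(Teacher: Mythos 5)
Your handling of (CN1), (CN2), self-duality and the $H$-independence (everything in sight lands in $T^*M=\ker\an$, so the twisted terms drop) is correct and matches the paper, but the two substantive steps both have problems. In the verification of \eqref{eq:im3} you set the left-hand side to zero: $r=0$ does not kill $l(\Cour{\sigma_1,\sigma_2})$, since $l=(0,g^\flat)$ applied to the bracket gives $(0,g^\flat([X_1,X_2]))$. As a result the $1$-form identity you reduce to, $\Lie_{X_1}g^\flat(X_2)=g^\flat(\nabla_{X_2}X_1)+g(\nabla_{(\cdot)}X_1,X_2)$, is false: pairing with $Y$ and using $\nabla g=0$ and zero torsion, it is equivalent to $g(\nabla_{X_1}X_2-\nabla_{X_2}X_1,Y)=0$, i.e.\ $[X_1,X_2]=0$, so it does not ``collapse to the vanishing of torsion'' as you assert. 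The correct reduction is $g^\flat([X_1,X_2])=\Lie_{X_1}g^\flat(X_2)-g^\flat(\nabla_{X_2}X_1)-g(\nabla_{(\cdot)}X_1,X_2)$, which is exactly \eqref{eq:derivative_g} in the paper and does follow from $\nabla$ being metric and torsion-free; so this part is repairable, but as written it is wrong.

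For \eqref{eq:im4} you give no computation, only the expectation that it follows from the metric and torsion-free conditions ``exactly as in \cite{BDN}''. This misses the key ingredient: after expanding, the term $\an^*(i_X\,dC(\sigma_1,\sigma_2))$ contributes $i_Wi_Z\,d\,g(\nabla_{(\cdot)}X,Y)$, and rewriting everything via $\nabla g=0$ and zero torsion leaves curvature terms; the identity holds only because $g(R(X,Z)Y,W)+g(R(Z,Y)X,W)+g(R(Y,X)Z,W)=0$ (the first Bianchi identity) together with the symmetries of $R$ — a genuinely second-order fact with no analogue in the verification for $\mathcalbb{D}^r$, where no connection or curvature appears. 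Since (CN4) is the main content of the lemma, leaving it at ``I expect it to follow'' is a genuine gap rather than bookkeeping.
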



\begin{proof}
Since $\mathcalbb{D}^g$ is symmetric, it remains to show that the Courant equations (CN1)--(CN4) are satisfied.
The only non-trivial equations to check are \eqref{eq:im3} and \eqref{eq:im4}. Proving \eqref{eq:im3} amounts to verifying that
\begin{equation}\label{eq:derivative_g}
g^\flat([X,Y]) = \Lie_X g^\flat(Y) - g^\flat(\nabla_Y X) - g(\nabla_{(\cdot)} X, Y), \qquad \forall \, X, Y \in \frakx(M).
\end{equation}
This holds because $\nabla$ is Levi-Civita: since $\nabla$ is metric and has zero torsion, we have
\begin{align*}
i_Z\Lie_X g^\flat(Y) & = \Lie_X g(Y,Z) - g(Y, [X,Z]) 
 = g(\nabla_Y X + [X,Y], Z) + g(Y,\nabla_Z X)\\
& = i_Z \left(g^\flat(\nabla_YX) + g^\flat([X,Y]) + g(Y,\nabla_{(\cdot)} X)\right).
\end{align*}
Condition \eqref{eq:im4}, in turn, follows from the first Bianchi identity. Indeed, one must show that
\begin{align*}
g(\nabla_Z[X,Y],W)  = & \<\Lie_X g^\flat(\nabla_Z Y), W\> - \<\Lie_Y g^\flat(\nabla_Z X), W\> + g(\nabla_{[Y,Z]}X, W) - g(\nabla_{[X,Z]}Y, W)\\
& - i_W i_Z d g(\nabla_{(\cdot)} X, Y)
\end{align*}
Let us denote the right-hand side of this last equation by $\Upsilon$.  Using that 
$$
i_W i_Z d g(\nabla_{(\cdot)} X, Y) = \Lie_Z g(\nabla_W X, Y) - \Lie_W g(\nabla_Z X, Y) - g(\nabla_{[Z,W]}X, Y)
$$ 
and, once again, the fact that $\nabla$ is metric and has zero torsion, one obtains that
\begin{align*}
\Upsilon & = g(R(X,Z)(Y), W) - g(R(Y,Z)(X), W) - g(R(Z,W)(X), Y) + g(\nabla_Z \nabla_X Y - \nabla_Z \nabla_Y X, W)\\
             & = \underbrace{g(R(X,Z)(Y), W) + g(R(Z,Y)(X), W) + g(R(Y,X)(Z), W)}_{=0 \; \text{(by the first Bianchi identity)}} + g(\nabla_Z [X,Y], W),
\end{align*}
where $R$ is the curvature tensor, and we used its symmetries in the last equality. This concludes the proof. 
\end{proof}

Now let
$$
\mathcalbb{D}^{r,g} = (\D^{r,g}, (r,r^*+g^\flat), r)
$$ 
be the 1-derivation defined by \eqref{dfn:std_der} with $\Sigma= g^\flat (\nabla)$, i.e., the sum of $\mathcalbb{D}^r$ (see Example~\ref{ex:Dr}) with the 1-derivation $\mathcalbb{D}^g$.

\begin{prop}\label{prop:Dg}
The 1-derivation $\mathcalbb{D}^{r,g}$ is a Courant 1-derivation on $\T M$ for any $H$-twisted Courant bracket such that $H$ is compatible with $r$.
\end{prop}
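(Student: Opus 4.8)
The plan is to avoid any direct computation and argue by linearity. As observed right after Definition~\ref{def:Cder}, the Courant compatibility equations \eqref{eq:im1}--\eqref{eq:im4}, together with the symmetry requirement $\mathcal{D}^*=\mathcal{D}$, are all $\R$-linear in the triple $\mathcal{D}=(D,l,r)$: the operator $D^r_X$ depends linearly on $r$ (see \eqref{defn:Dr}), and the one-form $C(\sigma_1,\sigma_2)=\<D_{(\cdot)}(\sigma_1),\sigma_2\>$, as well as the terms $\an^*(C(\sigma_1,\sigma_2))$ and $\an^*(i_X\,dC(\sigma_1,\sigma_2))$ occurring in \eqref{eq:im3}--\eqref{eq:im4}, depend linearly on $D$. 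Hence, for a \emph{fixed} Courant algebroid structure on $\T M$, the Courant 1-derivations form a linear subspace. By construction $\mathcalbb{D}^{r,g}=\mathcalbb{D}^r+\mathcalbb{D}^g$ componentwise — the defining data in \eqref{dfn:std_der} split as $(r,r^*+g^\flat)=(r,r^*)+(0,g^\flat)$ and $(D^r,D^{r,*}+\Sigma)=(D^r,D^{r,*})+(0,\D^g)$ with $\Sigma=g^\flat(\nabla)$ — so it suffices to know that both summands are Courant 1-derivations with respect to one and the same $H$-twisted Courant bracket.

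This last point is exactly what the earlier results supply. The preceding Lemma shows that $\mathcalbb{D}^g$ is a Courant 1-derivation of $\T M$ for \emph{every} $H$-twisted Courant bracket, while Example~\ref{ex:Dr} records that $\mathcalbb{D}^r$ is a Courant 1-derivation with respect to the $H$-twisted bracket precisely when $H$ is compatible with $r$ in the sense of \cite[Def.~5.1]{BDN} — which is the standing hypothesis of the Proposition. Adding the two, we conclude that $\mathcalbb{D}^{r,g}=\mathcalbb{D}^r+\mathcalbb{D}^g$ is a Courant 1-derivation for the $H$-twisted Courant algebroid, as claimed.

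I do not expect a genuine obstacle here; the argument is essentially bookkeeping. The one point worth making explicit is that the decomposition $\mathcalbb{D}^{r,g}=\mathcalbb{D}^r+\mathcalbb{D}^g$ takes place over the \emph{same} pairing on $\T M$ (the canonical one, which is independent of the choice of $H$), so that the identification $\T M\cong(\T M)^*$ and hence the symmetry condition $\mathcal{D}^*=\mathcal{D}$ are common to all the structures involved, and the concomitant splits additively, $C_{\mathcalbb{D}^{r,g}}=C_{\mathcalbb{D}^r}+C_{\mathcalbb{D}^g}$. For completeness one could instead verify \eqref{eq:im1}--\eqref{eq:im4} for $\mathcalbb{D}^{r,g}$ by hand, merging the computation from \cite[Lem.~6.1]{BDN} (where the compatibility of $H$ with $r$ is exactly what controls the $i_Yi_XH$ and $i_Yi_XH_r$ contributions of the twisted bracket) with the Levi-Civita identities used in the proof of the preceding Lemma; but the linearity argument renders this unnecessary.
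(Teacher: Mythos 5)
Your argument is correct and is exactly the paper's proof: decompose $\mathcalbb{D}^{r,g}=\mathcalbb{D}^r+\mathcalbb{D}^g$, use the preceding Lemma for $\mathcalbb{D}^g$, the compatibility of $H$ with $r$ for $\mathcalbb{D}^r$ (Example~\ref{ex:Dr}), and the fact that Courant 1-derivations for a fixed Courant algebroid form a linear subspace. Your additional remarks on the pairing being independent of $H$ and the concomitant splitting additively are just explicit bookkeeping of what the paper leaves implicit.
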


(The compatibility of $H$ and $r$ is recalled in Example~\ref{ex:Dr}).

\begin{proof}
The result follows from the previous lemma, the property that the sum of Courant 1-derivations is a Courant 1-derivation, and the fact that $\mathcalbb{D}^r$ is a Courant 1-derivation with respect to any $H$-twisted Courant bracket such that $H$ is compatible with $r$. 
\end{proof}

\begin{rmk}\em
More generally, one can replace the Levi-Civita connection
by any other metric connection $\nabla$ in the definition of $\D^{r,g}$. In this case, equations \eqref{eq:im3} and \eqref{eq:im4} are equivalent to the torsion $T$ of $\nabla$ being skew-symmetric and closed (i.e., $\varphi(X,Y,Z) := g(T(X,Y),Z)$ defines a closed 3-form).
\hfill $\diamond$
\end{rmk}

 \begin{ex} \label{ex:dirac_g}\em 
 {
The lagrangian subbundles $L \subset \T M$ which are $\mathcalbb{D}^g$-invariant are characterized, following Lemma \ref{lem:lagr_comp},  by the vanishing of $S_L \in \Gamma(S^2 L^*)$ and $C_L \in \Gamma(\wedge^2 L^* \otimes T^*M)$  given by
$$
S_L(\sigma_1,\sigma_2) = g(\pr_{TM}(\sigma_1),\pr_{TM}(\sigma_2)) \,\,\text{ and } \,\, C_L(\sigma_1,\sigma_2,Z) = g(\nabla_Z \,\pr_{TM}(\sigma_1), \sigma_2).
$$
The vanishing of $S_L$ is equivalent to the presymplectic distribution of $L$ being isotropic with respect to $g$. In particular, the only 2-form whose graph is $\mathcalbb{D}^g$-invariant is the zero 2-form. Under the assumption that $S_L=0$, a sufficient condition for the vanishing of $C_L$ is the invariance of $\pr_{TM}(L)$ with respect to the Levi-Civita connection (and these conditions are equivalent when $\pr_{TM}(L)$ is maximally isotropic).}
\hfill $\diamond$
\end{ex}

\subsection{Nijenhuis 1-derivations  and the K\"ahler condition}

Let $r: TM\to TM$ be a (1,1)-tensor field, and let $g$ be a pseudo-Riemannian metric with Levi-Civita connection $\nabla$. We now give conditions on the pair $(r,g)$ ensuring that the 1-derivation
$\mathcalbb{D}^{r,g}$ satisfies the Nijenhuis equations, thereby defining a Courant-Nijenhuis 1-derivation (by Prop.~\ref{prop:Dg}).
We will denote the corresponding linear (1,1)-tensor field on $\T M$ by 
$$
K_{r,g} = K_r + K_g,
$$
see \eqref{eq:Kr} and \eqref{eq:K_g}. 

As recalled in Example~\ref{ex:Dr}, the 1-derivation $\mathcal{D}^r$ satisfies the Nijenhuis equations  if and only if $r$ is a Nijenhuis operator ($\N_r=0$).  On the other hand, it is a simple verification that $\mathcalbb{D}^g$ is always a Nijenhuis 1-derivation. More generally, we have

\begin{prop}\label{prop:Nij}
For a pair $(r,g)$, suppose that
\begin{itemize}
\item $r$ is a Nijenhuis operator,
\item $g^\flat \circ r= - r^* \circ g^\flat$,
\item $\nabla r =0$.
\end{itemize}
Then the 1-derivation $\mathcalbb{D}^{r,g}$ satisfies the Nijenhuis equations (equivalently, $K_{r,g}$ has vanishing Nijenhuis torsion).
\end{prop}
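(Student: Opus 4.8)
The plan is to use additivity. By construction, $\mathcalbb{D}^{r,g} = \mathcalbb{D}^r + \mathcalbb{D}^g$ (cf.\ \eqref{dfn:std_der}), and both summands satisfy the Nijenhuis equations \eqref{eq:van_Nijenhuis}: $\mathcalbb{D}^r$ does precisely because $\N_r=0$ (Example~\ref{ex:Dr}), and $\mathcalbb{D}^g$ always does. Since the second and third equations in \eqref{eq:van_Nijenhuis} are quadratic in the 1-derivation (the first being the hypothesis $\N_r=0$), the claim reduces to verifying that the associated bilinear \emph{cross-terms} between $\mathcalbb{D}^r$ and $\mathcalbb{D}^g$ vanish. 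Decomposing $l=(r,r^*+g^\flat)$ and $D_X=(D^r_X, D^{r,*}_X+\Sigma_X)$, with $\Sigma_X=g^\flat\nabla_X$, into blocks along $\T M=TM\oplus T^*M$, these cross-terms live in the lower-triangular block $\Gamma(TM)\to\Gamma(T^*M)$, while the diagonal blocks reproduce the (already known) Nijenhuis equations for $\mathcalbb{D}^r$ and the upper-triangular block vanishes trivially.

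A convenient preliminary observation is that, since $\nabla$ is torsion-free with $\nabla r=0$, one has the closed forms $D^r_XY = r\nabla_XY-\nabla_{rX}Y$ and (using also $g^\flat r=-r^*g^\flat$) $D^{r,*}_X(g^\flat Y)=-g^\flat(r\nabla_XY+\nabla_{rX}Y)$; thus every operator in play ($D^r$, $D^{r,*}$, $\Sigma$, $r$, $r^*$, $g^\flat$) is expressed through $\nabla$ and $r$. For the second equation in \eqref{eq:van_Nijenhuis}, the cross-term is $(\Sigma_Xr-r^*\Sigma_X)+(D^{r,*}_Xg^\flat-g^\flat D^r_X)$ as a map $\Gamma(TM)\to\Gamma(T^*M)$; substituting the closed forms, the two brackets equal $-2r^*g^\flat\nabla_X$ and $+2r^*g^\flat\nabla_X$ respectively (both steps use $\nabla r=0$ and $g^\flat r=-r^*g^\flat$), hence they cancel.

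The main obstacle is the third equation in \eqref{eq:van_Nijenhuis}. After discarding the diagonal part (zero by the Nijenhuis property of $\mathcalbb{D}^r$) and the pieces that vanish for formal reasons (for instance the relevant commutator of the $(0,\Sigma)$-blocks, since $\Sigma$ takes values in the $T^*M$-summand, which these blocks annihilate), one is left with a $T^*M$-valued bilinear expression in $\mathrm{pr}_{TM}\sigma$ and in $X,Y$. Rewriting it with the closed forms above, converting all Lie brackets into covariant derivatives, and using $\nabla r=0$ (so that the curvature $R$ of $\nabla$ commutes with $r$), this expression collapses to a cyclic sum of curvature terms which vanishes by the first Bianchi identity -- exactly the mechanism used to prove that $\mathcalbb{D}^g$ is a Courant 1-derivation. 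I expect the fiddliest point to be the bookkeeping of the term $D_{[X,Y]_r}(\cdot)$, whose deformed bracket $[X,Y]_r$ is built from the $r$ of $\mathcalbb{D}^{r,g}$ and so, on the $\mathcalbb{D}^g$-block, contributes an extra piece $g^\flat\nabla_{[X,Y]_r}(\cdot)$ absent from $\mathcalbb{D}^g$'s own Nijenhuis equation (where the deformed bracket is $[X,Y]_0=0$); this piece must be absorbed by the remaining terms before the Bianchi cancellation applies. No conceptual difficulty beyond that arises. (Alternatively, one can work with the linear $(1,1)$-tensors: $\N_{K_{r,g}}=\N_{K_r}+\N_{K_g}+[K_r,K_g]_{\mathrm{FN}}$ with the first two terms zero, so it suffices to show the Fr\"olicher--Nijenhuis bracket $[K_r,K_g]_{\mathrm{FN}}$ vanishes, using that $K_g$ is nilpotent, supported on the vertical bundle of $q:\T M\to M$ and zero on the horizontal bundle of $\nabla$, and that $\nabla r=0$ governs how $K_r=(r^{tg},r^{ctg})$ interacts with this splitting.)
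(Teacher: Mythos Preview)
Your approach is essentially the paper's: split $\mathcalbb{D}^{r,g}=\mathcalbb{D}^r+\mathcalbb{D}^g$, observe that each summand is Nijenhuis, and reduce to the bilinear cross-terms in the lower-triangular block $\Gamma(TM)\to\Gamma(T^*M)$; your closed forms $D^r_XY=r\nabla_XY-\nabla_{rX}Y$ and $D^{r,*}_X(g^\flat Y)=-g^\flat(r\nabla_XY+\nabla_{rX}Y)$ are exactly the rewritings the paper uses, and your treatment of the second Nijenhuis equation matches \eqref{eq:g_Nij1}. The one discrepancy is the mechanism closing the third equation: you anticipate a cyclic sum killed by the first Bianchi identity (by analogy with the lemma showing $\mathcalbb{D}^g$ is Courant), whereas the paper, after using the curvature symmetries, reduces \eqref{eq:g_Nij2} to $g(R(Z,\cdot)(rX)-r(R(Z,\cdot)X),Y)=0$, i.e.\ to $[R,r]=0$---which you do mention parenthetically but not as the decisive step. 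So your sketch is correct in outline, but when you carry out the bookkeeping you flag as ``fiddly'' you will find the cancellation comes from $[R,r]=0$ rather than a Bianchi cycle.
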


\begin{proof} 
Since  $\N_r=0$ by assumption, one only has to check the remaining two Nijenhuis equations in \eqref{eq:van_Nijenhuis}. Using that $\calD^r$ is already a Courant-Nijenhuis 1-derivation on $\T M$ (see Example \ref{ex:Dr}), it suffices to show that
\begin{align}
\label{eq:g_Nij1} g^\flat(D^r_Z(X)) - D^{r,*}_Z(g^\flat(X)) =\, & g^\flat(\nabla_Z\,r(X))-r^*(g^\flat(\nabla_ZX),  \\
\label{eq:g_Nij2}D^{r,*}_Y (g^\flat(\nabla_X Z)) - D^{r, *}_X( g^\flat(\nabla_Y Z)) + r^*g^\flat(\nabla_{[X,Y]}Z)  =\, & g^\flat\left(\nabla_X D^r_Y(Z) - \nabla_Y D^r_X(Z)  \vphantom{D^r_{[X,Y]}(Z)}\right.\\
\nonumber & \left. - D^r_{[X,Y]}(Z) - \nabla_{[X,Y]_r}Z \right).
\end{align} 
Using \eqref{defn:Dr} and \eqref{defn:Dr_dual}, the compatibility $g^\flat\circ r = -r^*\circ g^\flat$ and that $\nabla$ is Levi-Civita, we can prove that \eqref{eq:g_Nij1} is equivalent to
$$
g^\flat((\nabla_Xr)(Z)) + g((\nabla_{(\cdot)} r)(Z), X))=0,
$$
which holds since $\nabla r=0$.

Regarding \eqref{eq:g_Nij2}, we can use that $\nabla r =0$ and that $\nabla$ is Levi-Civita to rewrite some of its terms as follows:
\begin{align*}
\nabla_X D^r_Y(Z) & =  \nabla_X r(\nabla_Y Z) -  \nabla_X \nabla_{r(Y)} Z,\\
D^{r}_{[X,Y]}(Z) & = r(\nabla_{[X,Y]}Z)-  \nabla_{r([X,Y])} Z,  \\
D^{r,*}_X (g^\flat(\nabla_Y Z)) & = -g^\flat(\nabla_X \,r(\nabla_Y Z)) - g^\flat(\nabla_{r(X)}\nabla_Y Z).
\end{align*}
Similar formulas hold for $\nabla_Y D^r_X(Z)$ and $D^{r,*}_Y( g^\flat(\nabla_X Z))$. After some cancellations and re-groupings, one obtains that  \eqref{eq:g_Nij2} is equivalent to
\begin{align*}
g(R(Z, \cdot)(r(X))-r(R(Z,\cdot)(X)), Y) =0,
\end{align*}
where  we have used the symmetries of the curvature tensor $R$. Therefore \eqref{eq:g_Nij2} holds, since $[R,r]=0$. This concludes the proof.
\end{proof}

A pair $(r, g)$, where $r$ is a (1,1)-tensor field and $g$ is a pseudo-Riemannian metric on $M$, defines a {\em  (pseudo-) K\"ahler structure} if $r$ and $g$ satisfy the three conditions in Prop.~\ref{prop:Nij} with the additional requirement that $r^2=-\mathrm{Id}_{TM}$ (so that $r$ is a complex structure). Pseudo-K\"ahler structures  admit the following characterization in terms of 1-derivations.

\begin{prop}\label{prop:almostcomplex}
A pair $(r,g)$ defines a pseudo-K\"ahler structure if and only if the 1-derivation $\mathcalbb{D}^{r,g}$ satisfies the almost-complex equations in \eqref{eq:square_minus_id} (equivalently, $K_{r,g}^2 = -\mathrm{Id}$).
\end{prop}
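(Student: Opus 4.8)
The plan is to reduce the equivalence to the componentwise characterizations already at our disposal. Recall from \eqref{eq:square_minus_id} that, for a general 1-derivation $(D,l,r)$ with associated linear $(1,1)$-tensor $K$, the condition $K^2=-\mathrm{Id}$ is equivalent to the three equations $r^2=-\mathrm{Id}_{TM}$, $l^2=-\mathrm{Id}_{E}$, and $D_{r(X)}(\sigma)+l(D_X(\sigma))=0$. For $E=\T M$ and the 1-derivation $\mathcalbb{D}^{r,g}$, we have the explicit description \eqref{dfn:std_der} with $l=(r,r^*+g^\flat)$ and $D=(D^r, D^{r,*}+\Sigma)$, where $\Sigma=g^\flat\circ\nabla$. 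So the first step is to substitute these expressions into the three equations of \eqref{eq:square_minus_id} and unwind what they say in terms of the data $(r,g,\nabla)$.

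First I would handle the purely algebraic conditions. The equation $r^2=-\mathrm{Id}_{TM}$ is literally the requirement that $r$ be a complex structure. For $l^2=-\mathrm{Id}$: writing $l=(r, r^*+g^\flat)$ on $TM\oplus T^*M$, one computes $l^2$ on the $TM$-summand giving $r^2=-\mathrm{Id}_{TM}$ again, and on the $T^*M$-summand giving $(r^*+g^\flat)^2 + \text{(cross term through } g^\flat \circ r)$; organizing this, $l^2=-\mathrm{Id}$ becomes $(r^*)^2=-\mathrm{Id}_{T^*M}$ (automatic from $r^2=-\mathrm{Id}$) together with $g^\flat\circ r + r^*\circ g^\flat = 0$, which is exactly the second condition in Prop.~\ref{prop:Nij}. (One must be slightly careful about how $l$ acts as a map $TM\oplus T^*M\to TM\oplus T^*M$; the notation $(r,r^*+g^\flat)$ encodes the block-lower-triangular operator $X\mapsto r(X)$, $\xi\mapsto r^*\xi$, and $X\mapsto g^\flat(X)$ in the off-diagonal $TM\to T^*M$ slot, so the computation of $l^2$ must track that off-diagonal block.)

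Next I would address the third, differential equation $D_{r(X)}(\sigma)+l(D_X(\sigma))=0$. Splitting $\sigma=(Y,\beta)$, the $TM$-component of this equation is $D^r_{r(X)}(Y) + r(D^r_X(Y))=0$, which — since $\mathcal{D}^r$ already satisfies the almost-complex equations precisely when $r^2=-\mathrm{Id}$ (stated in Example~\ref{ex:Dr}) — is equivalent to $r$ being a complex structure together with $\N_r=0$; more carefully, for the full almost-complex condition on $\mathcalbb D^r$ one needs $r^2=-\mathrm{Id}$ only, but combined with the Nijenhuis requirement we recover what we want. The $T^*M$-component, after using $D=(D^r,D^{r,*}+\Sigma)$ and $\Sigma_X=g^\flat\circ\nabla_X$, becomes an identity relating $D^{r,*}$, $g^\flat$, $\nabla$ and $r$; using that $g^\flat\circ r=-r^*\circ g^\flat$ (already obtained from $l^2=-\mathrm{Id}$) and that $\nabla$ is metric and torsion-free, I expect this to collapse to $g^\flat\circ(\nabla_X r)=0$ for all $X$, i.e. $\nabla r=0$, which is the third condition in Prop.~\ref{prop:Nij}. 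The clean way to run this is to invoke the already-established fact (Prop.~\ref{prop:Nij} and its proof, via \eqref{eq:g_Nij1}) that $g^\flat(D^r_Z(X)) - D^{r,*}_Z(g^\flat(X)) = g^\flat(\nabla_Z r(X)) - r^*(g^\flat(\nabla_Z X))$ is an identity, which isolates precisely the term $g^\flat((\nabla_X r)(Z))$.

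Putting the three steps together: the almost-complex equations for $\mathcalbb D^{r,g}$ hold if and only if $r^2=-\mathrm{Id}_{TM}$, $g^\flat\circ r+r^*\circ g^\flat=0$, and $\nabla r=0$ — and these four conditions (the three of Prop.~\ref{prop:Nij} plus $r^2=-\mathrm{Id}$) are by definition what it means for $(r,g)$ to be a pseudo-K\"ahler structure. Conversely, assuming pseudo-K\"ahler, all three equations of \eqref{eq:square_minus_id} are verified by reading the same computations backwards. The main obstacle I anticipate is purely bookkeeping: correctly tracking the off-diagonal $g^\flat$ block when squaring $l$ and when expanding $D_{r(X)}\sigma + l(D_X\sigma)$ on the $T^*M$-summand, since that is where the conditions $g^\flat r = -r^* g^\flat$ and $\nabla r=0$ actually get produced; once one is careful there, everything reduces to identities already proved in the lead-up to Prop.~\ref{prop:Nij}.
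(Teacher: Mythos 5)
Your overall strategy --- substitute the explicit form \eqref{dfn:std_der} of $\mathcalbb{D}^{r,g}$ into the three equations of \eqref{eq:square_minus_id} --- is exactly the paper's, and your treatment of the two algebraic equations is correct: $r^2=-\mathrm{id}_{TM}$ gives the almost complex structure, and $l^2=-\mathrm{id}$ with $l=(r,r^*+g^\flat)$ reduces to $g^\flat\circ r+r^*\circ g^\flat=0$. The gap is in the third equation, specifically in where $\N_r=0$ is supposed to come from. You claim the $TM$-component $D^r_{r(X)}(Y)+r(D^r_X(Y))=0$ ``is equivalent to $r$ being a complex structure together with $\N_r=0$''. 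It is not: from \eqref{defn:Dr} one gets $D^r_{r(X)}(Y)+r(D^r_X(Y))=[Y,r^2(X)]-r^2([Y,X])$, which vanishes identically once $r^2=-\mathrm{id}$ and carries no information about the Nijenhuis torsion. Correspondingly, your prediction that the $T^*M$-component collapses to $\nabla r=0$ alone is incomplete: the paper's computation gives
\begin{equation*}
\D^{r,g}_{r(X)}(Y,\beta) + (r,r^*+g^\flat)\bigl(\D^{r,g}_X(Y,\beta)\bigr) = \bigl(0,\ \<\beta,\N_r(X,\cdot)\> + g^\flat((\nabla_Y r)(X))\bigr),
\end{equation*}
and it is precisely the $\beta$-dependent term $\<\beta,\N_r(X,\cdot)\>$ --- coming from $D^{r,*}_{r(X)}+r^*\circ D^{r,*}_X$, which is \emph{not} zero even when $r^2=-\mathrm{id}$ --- that produces $\N_r=0$; setting $Y=0$ and $\beta=0$ separately yields $\N_r=0$ and $\nabla r=0$. (The parenthetical in Example~\ref{ex:Dr} that you lean on here is in tension with this displayed formula, so it is not a safe shortcut.) As written, your argument never validly establishes $\N_r=0$ from the almost-complex equations, and your final list of equivalent conditions omits it while asserting equivalence with the four-condition definition of pseudo-K\"ahler.

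The omission happens to be repairable without redoing the computation: since $\nabla$ is the Levi-Civita connection (torsion-free), $\nabla r=0$ already forces $\N_r=0$ via the standard expression of $\N_r$ in terms of $\nabla r$, so your three conditions do imply the fourth. But you must either say this explicitly or compute the $T^*M$-component correctly as above; the step attributing $\N_r=0$ to the $TM$-component would fail if actually carried out.
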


\begin{proof} Recall that the almost-complex equations are 
$$
r^2=-\mathrm{id}_{TM},\;\;\;\;
(r,r^*+g^\flat)^2=-\mathrm{id}_{T(\T M)},\;\;\;\;\;
\D^{r,g}_{r(X)}(\sigma) + (r,r^*+g^\flat)(\D^{r,g}_X(\sigma)) = 0
$$
By the first equation, $r$ is an almost complex structure. The second equation is equivalent to
$r^*\circ g^\flat =- g^ \flat \circ r$ (i.e.,
$(g,r)$ is almost Hermitian). Since $r^2=-\mathrm{id}_{TM}$, the 1-derivation $(\D^r,(r,r^*), r)$ satisfies the almost complex equations (see Example~ \ref{ex:Dr}), and using this fact one verifies that
$$
\D^{r,g}_{r(X)}(Y,\beta) + (r,r^*+g^\flat)(\D^{r,g}_X(Y,\beta)) = (0,  \<\beta, \N_r(X,\cdot)\> + g^\flat((\nabla_Y r)(X))\,).
$$
By setting $Y=0$ or $\beta =0$ above, we see that the third almost-complex equation is equivalent to $\N_r=0$ and $\nabla r=0$. 
\end{proof}

It follows from Propositions~\ref{prop:Nij} and \ref{prop:almostcomplex} that, for a pair $(r,g)$, if $K_{r,g}$ is an almost complex structure then it is automatically integrable. 


\begin{cor}\label{cor:Krg} A pseudo-K\"ahler metric $g$ on  a complex manifold $(M,r)$
defines a holomorphic vector bundle structure on $\T M\to M$ with Dolbeault 1-derivation $\mathcalbb{D}^{r,g}$ and linear complex structure on the total space $\T M$ given by $K_{r,g}$. Moreover, for any holomorphic 3-form $H$, $\mathcalbb{D}^{r,g}$ is a Courant 1-derivation with respect to the $H$-twisted Courant bracket on $\T M$.
\end{cor}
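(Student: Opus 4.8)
The plan is to obtain the corollary simply by assembling the results already established in this section, together with the characterization of Dolbeault 1-derivations from Example~\ref{ex:holrev}. Recall that a (pseudo-)K\"ahler structure $(r,g)$ is, by definition, a pair with $r^2=-\mathrm{Id}_{TM}$ satisfying the three hypotheses of Prop.~\ref{prop:Nij}, namely $\mathcal N_r=0$, $g^\flat\circ r=-r^*\circ g^\flat$, and $\nabla r=0$; a complex manifold $(M,r)$ with a pseudo-K\"ahler metric $g$ supplies exactly such data ($r^2=-\mathrm{Id}_{TM}$ and $\mathcal N_r=0$ because $(M,r)$ is a complex manifold, and the remaining two because $g$ is Hermitian and Kähler). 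So I would first invoke Prop.~\ref{prop:Nij} to see that $\mathcalbb{D}^{r,g}$ satisfies the Nijenhuis equations \eqref{eq:van_Nijenhuis}, and then Prop.~\ref{prop:almostcomplex} to see that it also satisfies the almost-complex equations \eqref{eq:square_minus_id}. By Example~\ref{ex:holrev}, a 1-derivation satisfying both systems is precisely a Dolbeault 1-derivation, so $\mathcalbb{D}^{r,g}$ encodes a holomorphic vector bundle structure on $\T M\to M$; and by the defining property of Dolbeault 1-derivations (Example~\ref{ex:hol}), the associated linear $(1,1)$-tensor field on the total space is a complex structure, which is $K_{r,g}=K_r+K_g$ by construction, see \eqref{eq:Kr} and \eqref{eq:K_g}. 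This proves the first assertion.

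For the second assertion I would appeal to Prop.~\ref{prop:Dg}: $\mathcalbb{D}^{r,g}$ is a Courant 1-derivation for the $H$-twisted Courant bracket whenever $H$ is compatible with $r$ in the sense of Example~\ref{ex:Dr}, i.e.\ $H_r\in\Omega^3(M)$ and $dH_r=0$. It then only remains to check that a holomorphic 3-form enjoys these two properties. Writing such an $H$ as the real part of a closed holomorphic $(3,0)$-form $\Theta$ and using that $r$ acts as multiplication by $\ii$ on $(1,0)$-vectors, one finds $\Theta(r(\cdot),\cdot,\cdot)=\ii \Theta$, whence $H_r=-\Im\Theta$ is again a real 3-form and $dH_r=-\Im(d\Theta)=0$ since $\Theta$ is closed. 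Thus $H$ is $r$-compatible and the claim follows from Prop.~\ref{prop:Dg}.

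I do not expect a genuine obstacle here: all the substantive content (the identification of the Nijenhuis and almost-complex conditions with the K\"ahler package, and the Courant compatibility for $r$-compatible twists) is already contained in Props.~\ref{prop:Nij}, \ref{prop:almostcomplex} and \ref{prop:Dg}, so the corollary is essentially a matter of bookkeeping. The only point needing mild care is fixing the meaning of ``holomorphic 3-form'' — as (the real part of) a $d$-closed $(3,0)$-form, so that the $H$-twisted bracket is defined and $H_r$ is an honest 3-form — consistently with the conventions of $\S$\ref{sec:hol}; once that is settled, the $r$-compatibility verification above is routine.
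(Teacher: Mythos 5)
Your argument is correct and is essentially the paper's own proof: the first assertion is Prop.~\ref{prop:Nij} together with Prop.~\ref{prop:almostcomplex}, combined with the characterization of Dolbeault 1-derivations in Example~\ref{ex:holrev}, and the second assertion is exactly the application of Prop.~\ref{prop:Dg}. Your explicit verification that a (closed) holomorphic 3-form is $r$-compatible ($H_r=-\Im\Theta\in\Omega^3(M)$ and $dH_r=0$) is a detail the paper leaves implicit, and your reading of ``holomorphic 3-form'' as the real part of a $d$-closed $(3,0)$-form is the intended one.
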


The last assertion follows from Prop.~\ref{prop:Dg}.
 
For a pseudo-K\"ahler structure $(r,g)$, we will see below that there is an explicit isomorphism relating the holomorphic structures on $\T M\to M$ defined by $\mathcalbb{D}^r$ and $\mathcalbb{D}^{r,g}$.

\subsection{B-field transformations}
Given a closed 2-form $B\in \Omega^2(M)$, the map 
$$
\tau_B: \T M \to \T M, \qquad \tau_B(X, \alpha) = (X, \alpha + i_XB),
$$
is a Courant automorphism of $\T M$ for any $H$-twisted Courant bracket (if $B$ is not closed, then $\tau_B$ intertwines Courant brackets twisted by $H$ and $(H-dB)$). Such $\tau_B$ is called a {\em gauge transformation} \cite{severaweinstein}, or a {\em $B$-field transform} \cite{gualt11}. 


We say that two Courant 1-derivations $\calD_1= (D_1, l_1, r_1)$ and $\calD_2= (D_2, l_2,r_2)$ on $\T M$ are \textit{gauge equivalent} if $r_1=r_2$ and there exists a closed 2-form $B$ such that 
$$
D_2 = \tau_B \circ D_1 \circ \tau_B^{-1} \text{ and } l_2 = \tau_B \circ l_1 \circ \tau^{-1}_B.
$$
The  Nijenhuis and almost complex equations \eqref{eq:van_Nijenhuis} and \eqref{eq:square_minus_id}, respectively, are invariant by gauge equivalence.

Consider a Courant 1-derivation $\calD$ determined by $r$, $g$ and $\Sigma$ via \eqref{dfn:std_der}; {\em here we no longer assume that the symmetric bilinear form $g$ is nondegenerate.} Then $\calD$ is gauge equivalent to $\mathcalbb{D}^r$ if and only if there exists a closed 2-form $B$ such that
\begin{equation}\label{eq:conjug}
g^\flat = B^\flat \circ r - r^*\circ B^\flat, \qquad  \Sigma_X = B^\flat \circ D^{r}_X - D^{r,*}_X \circ B^\flat.
\end{equation}
In particular, $\tau_B$ preserves the 1-derivation $\mathcalbb{D}^r$ (i.e., $g=0$ and $\Sigma=0$)  if and only if $B$ and $r$ are compatible in the sense of Magri-Morosi, see \cite[$\S$~5.2]{BDN} (cf. Example~\ref{ex:Dr}). 

Suppose that $r$ is a complex structure. 
Since $\mathcalbb{D}^r$ satisfies the almost complex equations \eqref{eq:square_minus_id}, any Courant 1-derivation $\calD$ gauge equivalent to it must satisfy these equations as well. In this case, if $\calD$ is determined by $r$, $g$ and $\Sigma$ (as in \eqref{dfn:std_der}), then necessarily $g^\flat \circ r = -r^*\circ g^\flat$ (see the proof of Proposition~ \ref{prop:almostcomplex}). So the pair $(r,g)$ gives rise to a 2-form $\omega \in \Omega^2(M)$ such that $\omega^\flat = g^\flat \circ r$.

We will now describe a natural class of Courant 1-derivations on $\T M$ that are gauge equivalent to $\mathcalbb{D}^r$. 

\begin{prop}\label{prop:closed_hol}
Suppose that $\calD$ is a Courant-Nijenhuis 1-derivation on $\T M$ determined by $r$, $g$ and $\Sigma$ (as in \eqref{dfn:std_der}) satisfying the almost complex equations. If the 2-form $\omega$ defined by $\omega^\flat = g^\flat \circ r$ is closed, then $\calD$ is gauge equivalent to $\mathcalbb{D}^r$.
\end{prop}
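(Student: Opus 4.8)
The plan is to produce a closed $2$-form $B$ satisfying the two identities in \eqref{eq:conjug}; by the criterion recalled just above, this establishes that $\calD$ is gauge equivalent to $\mathcalbb{D}^{r}$. First I would read off the algebraic consequences of the almost complex equations \eqref{eq:square_minus_id} for $\calD$: the first gives $r^{2}=-\mathrm{Id}_{TM}$, and expanding the second in the splitting $\T M=TM\oplus T^{*}M$ with $l=(r,r^{*}+g^{\flat})$ forces $r^{*}\circ g^{\flat}=-g^{\flat}\circ r$ (as in the proof of Proposition~\ref{prop:almostcomplex}). Hence $\omega$, with $\omega^{\flat}=g^{\flat}\circ r$, is a genuine $2$-form, closed by hypothesis, and I take $B:=-\tfrac{1}{2}\,\omega$, a closed $2$-form. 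A one-line computation using only $r^{2}=-\mathrm{Id}$ and $r^{*}\circ g^{\flat}=-g^{\flat}\circ r$ gives
$$
B^{\flat}\circ r-r^{*}\circ B^{\flat}=-\tfrac{1}{2}\bigl(g^{\flat}\circ r^{2}-r^{*}\circ g^{\flat}\circ r\bigr)=-\tfrac{1}{2}\bigl(-g^{\flat}-g^{\flat}\bigr)=g^{\flat},
$$
which is the first identity in \eqref{eq:conjug}; in particular the metric attached to $\tau_{B}\circ\mathcalbb{D}^{r}\circ\tau_{B}^{-1}$ is $g$ itself.

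For the second identity in \eqref{eq:conjug} --- where a head-on calculation would require a Koszul-type identity relating the Levi-Civita connection to $D^{r}$ and $D^{r,*}$ --- I would argue indirectly. Put $\tilde\Sigma_{X}:=B^{\flat}\circ D^{r}_{X}-D^{r,*}_{X}\circ B^{\flat}$; then, by the computation behind \eqref{eq:conjug}, $\tilde{\calD}:=\tau_{B}\circ\mathcalbb{D}^{r}\circ\tau_{B}^{-1}$ is precisely the $1$-derivation on $\T M$ determined via \eqref{dfn:std_der} by $r$, $g$ and $\tilde\Sigma$. Since $\tau_{B}$ is a Courant automorphism, $\tilde{\calD}$ is a Courant $1$-derivation; and since $r$ is a complex structure ($r^{2}=-\mathrm{Id}$, and $\N_{r}=0$ because $\calD$ is Nijenhuis), $\mathcalbb{D}^{r}$ is Courant-Nijenhuis and satisfies the almost complex equations (Example~\ref{ex:Dr}), hence so does $\tilde{\calD}$, these conditions being gauge invariant. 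Now $\calD_{0}:=\calD-\tilde{\calD}$ is again a Courant $1$-derivation, its $r$- and $l$-components vanish (both $\calD$ and $\tilde{\calD}$ have the same $r$ and the same $l=(r,r^{*}+g^{\flat})$), and $D_{0,X}(Y,\beta)=(0,\Theta_{X}(Y))$ with $\Theta:=\Sigma-\tilde\Sigma$. From \eqref{eq:Sigma_prop1}--\eqref{eq:Sigma_prop2} for $\Sigma$ and $\tilde\Sigma$ (with the same $g$), together with $\calD_{0}^{*}=\calD_{0}$, the map $\Theta$ is tensorial and skew-adjoint in its last two arguments, so $\Theta\in\Gamma(T^{*}M\otimes\wedge^{2}T^{*}M)$; writing $\Theta(X,Y,Z):=\langle\Theta_{X}(Y),Z\rangle$, it remains to show $\Theta=0$, since this gives $\Sigma=\tilde\Sigma$ and hence $\calD=\tilde{\calD}=\tau_{B}\circ\mathcalbb{D}^{r}\circ\tau_{B}^{-1}$.

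To force $\Theta=0$ I would assemble three facts. (i) Equation \eqref{eq:im3} for $\calD_{0}$, whose $l$- and $r$-parts vanish, unwinds --- using $\an^{*}(\xi)=(0,\xi)$ and $C_{0}(\sigma_{1},\sigma_{2})=\langle\Theta_{(\cdot)}(\an(\sigma_{1})),\an(\sigma_{2})\rangle$ --- to $\Theta(Y_{2},Y_{1},Z)+\Theta(Z,Y_{1},Y_{2})=0$; together with skew-symmetry in the last two slots this makes $\Theta$ totally antisymmetric, $\Theta\in\Omega^{3}(M)$. (ii) Subtracting the $T^{*}M$-component of the third equation of \eqref{eq:square_minus_id} written for $\calD$ and for $\tilde{\calD}$ (whose $\mathcalbb{D}^{r}$-components and metrics coincide, using the almost complex equation of $\mathcalbb{D}^{r}$ to drop the remaining $D^{r,*}$-terms) gives $\Theta(rX,Y,Z)=-\Theta(X,Y,rZ)$. (iii) Subtracting the $T^{*}M$-component of the Nijenhuis equation $[D_{X},l]=0$ from \eqref{eq:van_Nijenhuis} for $\calD$ and for $\tilde{\calD}$ (using $[D^{r,*}_{X},r^{*}]=0$ for $\mathcalbb{D}^{r}$) gives $\Theta(X,rY,Z)=\Theta(X,Y,rZ)$. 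By total antisymmetry, (i) and (ii) yield $\Theta(X,rY,Z)=-\Theta(X,Y,rZ)$, which against (iii) forces $\Theta(X,Y,rZ)=0$; since $r$ is invertible, $\Theta=0$.

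The main obstacle is exactly the second identity in \eqref{eq:conjug}; the device that circumvents it is to pass to the difference $\calD_{0}$, where $l$ and $r$ vanish and the Courant compatibility equations become transparent, and then to exploit that $\calD$ satisfies \emph{both} the Nijenhuis and the almost complex equations --- the ``companion'' constraints these place on $\Theta$, namely opposite $r$-symmetries in the second versus the third argument, are mutually compatible only if $\Theta=0$. Verifying (i)--(iii) is routine; the only real care needed is in bookkeeping the $\T M=TM\oplus T^{*}M$ decomposition and the $\mathcalbb{D}^{r}$-identities used to cancel the $D^{r}$- and $D^{r,*}$-terms.
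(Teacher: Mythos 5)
Your proof is correct and follows essentially the same route as the paper's: a $B$-field transform by $\pm\tfrac{1}{2}\omega$ reduces the problem to showing that a residual tensor $\Theta$ (the paper's $\Sigma^B$) vanishes, which you do exactly as the paper does --- total skew-symmetry via (CN3), then the two opposite $r$-symmetries coming from the almost-complex and Nijenhuis equations. The only cosmetic difference is the direction of conjugation (you push $\mathcalbb{D}^r$ forward to $\tilde{\calD}$ rather than pulling $\calD$ back to $\calD^B$, whence the opposite sign of $B$); since $\tau_B$ acts trivially on $1$-derivations with vanishing $l$- and $r$-components, your $\calD_0=\calD-\tilde{\calD}$ is literally the paper's $\calD^B-\mathcalbb{D}^r$.
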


\begin{proof}
    Let $B=\frac{1}{2}\omega$, and let $\calD^B$ be the Courant-Nijenhuis 1-derivation  obtained by  conjugating $\calD$ with the $B$-field transform $\tau_B$. 
    Denote by  $r$, $g^B$ and $\Sigma^B$ the data that determine $\calD^B$. Then it is straightforward to check that
     $$
    g_B=0\;\;\; \mbox{ and } \;\;\; \Sigma^B = \Sigma + (B^\flat(D^r) - D^{r,*}(B^\flat)).
    $$ 
    Using \eqref{eq:Sigma_prop1} and \eqref{eq:Sigma_prop2} for $\calD^B$, we see that $\Sigma^B$  defines an element $H \in \Gamma(\wedge^2 T^*M \otimes T^*M)$ by 
    $$
    H(X,Y;Z) = \<\Sigma^B_X(Y),Z\>.
    $$
    As a consequence of \eqref{eq:im3}, $H$ is totally skew-symmetric, i.e. $H \in \Omega^3(M)$\footnote{Although not needed in the proof, one can check that \eqref{eq:im4} is equivalent to $dH=0$. In fact, Courant 1-derivations $\calD$ determined by $r$, $g$ and $\Sigma$ on $\T M$ with $g=0$ are in bijective correspondence with closed 3-forms.}. It now follows directly from the almost complex equations and Nijenhuis equations that $H$ must satisfy
$$
H(r(X),Y,Z) = -H(X,r(Y),Z) \;\;\; \text{ and } \;\; H(r(X),Y,Z) = H(X,r(Y),Z).
$$
Hence $H=0$, which means that $\Sigma^B=0$. Therefore $\calD^B = \mathcalbb{D}^r$, as we wanted to prove.
\end{proof}

\begin{cor}\label{cor:Bfield}
    For a pseudo-K\"ahler structure $(r,g)$, the Courant 1-derivations $\mathcalbb{D}^r$ and $\mathcalbb{D}^{r,g}$ are gauge equivalent.  
\end{cor}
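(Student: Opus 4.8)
The plan is to deduce this directly from Proposition~\ref{prop:closed_hol}, so that the task reduces to verifying that $\mathcalbb{D}^{r,g}$ meets all the hypotheses of that proposition. Since $(r,g)$ is a pseudo-K\"ahler structure, $r$ is a Nijenhuis operator, $g^\flat\circ r=-r^*\circ g^\flat$ and $\nabla r=0$ (with $\nabla$ the Levi-Civita connection of $g$). Hence Proposition~\ref{prop:Dg} applies (with $H=0$) and shows that $\mathcalbb{D}^{r,g}$ is a Courant 1-derivation on $\T M$, while Proposition~\ref{prop:Nij} shows that it satisfies the Nijenhuis equations; together these say that $\mathcalbb{D}^{r,g}$ is a Courant-Nijenhuis 1-derivation, and by construction it is the one determined by the data $r$, $g$ and $\Sigma=g^\flat(\nabla)$ via \eqref{dfn:std_der}. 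Moreover, a pseudo-K\"ahler structure has $r^2=-\mathrm{Id}_{TM}$, so by Proposition~\ref{prop:almostcomplex} the 1-derivation $\mathcalbb{D}^{r,g}$ also satisfies the almost complex equations.

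The remaining point is the hypothesis on the $2$-form. I would introduce $\omega$ by $\omega^\flat=g^\flat\circ r$, i.e. $\omega(X,Y)=g(r(X),Y)$; the compatibility $g^\flat\circ r=-r^*\circ g^\flat$ is exactly what guarantees that $\omega$ is skew-symmetric, hence a genuine $2$-form. The key claim is that $\omega$ is closed. This is the classical fact that the K\"ahler form is closed: since $\nabla$ is metric, $\nabla g=0$, and by the K\"ahler hypothesis $\nabla r=0$, so $\nabla\omega=0$; as $\nabla$ is torsion-free, $d\omega$ equals the alternation of $\nabla\omega$, whence $d\omega=0$.

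With these facts in hand, $\mathcalbb{D}^{r,g}$ is a Courant-Nijenhuis 1-derivation determined by $r$, $g$, $\Sigma$, satisfying the almost complex equations, and the associated $2$-form $\omega$ with $\omega^\flat=g^\flat\circ r$ is closed; Proposition~\ref{prop:closed_hol} then yields that $\mathcalbb{D}^{r,g}$ is gauge equivalent to $\mathcalbb{D}^r$, which is the assertion. I do not anticipate any real obstacle here: the only nontrivial computational input is the closedness of the K\"ahler form, and the rest is a matter of matching hypotheses with the propositions already proved. (Alternatively, one could unwind the proof of Proposition~\ref{prop:closed_hol} with $B=\tfrac12\omega$ to display the gauge transformation $\tau_B$ explicitly, noting that conjugation by $\tau_B$ annihilates the symmetric part $g$ and forces the residual $3$-form to vanish, but invoking the proposition as stated is cleaner.)
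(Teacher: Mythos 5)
Your proposal is correct and follows the same route the paper intends: the corollary is stated as an immediate consequence of Proposition~\ref{prop:closed_hol}, and your verification of its hypotheses (via Propositions~\ref{prop:Dg}, \ref{prop:Nij} and \ref{prop:almostcomplex}, plus the classical fact that $\nabla g=0$ and $\nabla r=0$ force $\nabla\omega=0$ and hence $d\omega=0$ for the torsion-free Levi-Civita connection) is exactly the intended argument.
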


It follows that the usual holomorphic structure on $\T M \to M$ (defined by $\mathcalbb{D}^r$, see Example~\ref{ex:Dr}) and the one modified by the metric $g$ (defined by $\mathcalbb{D}^{r,g}$, see Cor.~\ref{cor:Krg})  are isomorphic through a gauge transformation $\tau_B$. Since $\tau_B$ 
establishes a bijective correspondence between the sets of $\mathcalbb{D}^r$-invariant and $\mathcalbb{D}^{r,g}$-invariant lagrangian subbundles, it is clear that a lagrangian subbundle $L\subset \T M$ is holomorphic with respect to the holomorphic structure modified by $g$  if and only if $\tau_B(L)$ is holomorphic in the usual sense. (In particular, this shows that $\mathcalbb{D}^{r,g}$-invariant Dirac structures are much less restrictive than those for $\mathcalbb{D}^g$, c.f. Example \ref{ex:dirac_g}).

\section{Holomorphic Courant algebroids}\label{sec:hol}
In this section we show that holomorphic Courant algebroids can be seen as special cases of  Courant-Nijenhuis structures.

For a complex manifold $(M,r)$ and holomorphic vector bundle $\mathcal{E}\to M$, we  denote by $\mathcal{O}$ the  sheaf of holomorphic functions on $M$ and by $\E$ the sheaf of holomorphic sections of $E$. 

\begin{defn} A holomorphic Courant algebroid is a holomorphic vector bundle $\mathcal{E} \to M$ endowed with a holomorphic non-degenerate symmetric $\mathcal{O}$-bilinear pairing $\<\cdot,\cdot\>: \E \times \E \to \mathcal{O}$, a holomorphic vector bundle map $\Panchor: E \to T^{1,0}M$ and a $\C$-bilinear bracket $\Cour{\cdot,\cdot}: \E \times \E \to \E$ satisfying axioms \eqref{C1}, \dots, \eqref{C5} in Definition \ref{dfn:courant_alg} with $\Panchor$ in place of $\an$, $f$ a local holomorphic function, and $\sigma_1, \, \sigma_2$ and $\sigma_3$ local holomorphic sections.
\end{defn}

 For the sake of completeness, we start by recalling that there exists a natural (real) smooth Courant algebroid underlying a holomorphic one (see e.g. \cite{gualt14}).  For a holomorphic vector bundle $\mathcal{E}$, we denote its underlying real, smooth vector bundle by $E$.

\begin{prop}\label{prop:smooth_real_cour}
Let $(\mathcal{E}, \Panchor, \Cour{\cdot,\cdot}, \<\cdot, \cdot\>)$ be a holomorphic Courant algebroid. There exists a unique (real) smooth Courant algebroid structure $(\an,  \<\cdot, \cdot\>_{\Cinfty}, \Cour{\cdot,\cdot}_\Cinfty)$ on $E$ such that
\begin{itemize}
\item[(1)] $\Panchor(\sigma) = \frac{1}{2}(\an(\sigma) - \ii r(\an(\sigma)))$
\item[(2)] $\Cour{\sigma_1,\sigma_2}_\Cinfty = \Cour{\sigma_1,\sigma_2}$
\item[(3)] $\<\sigma_1,\sigma_2\>_\Cinfty = \Real(\< \sigma_1,\sigma_2\>)$
\end{itemize}
for all local holomorphic sections $\sigma, \sigma_1, \sigma_2 \in \E(U)$.
\end{prop}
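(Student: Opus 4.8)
The plan is to construct the smooth Courant algebroid structure by first defining the required operations on holomorphic sections, then using the axioms of the holomorphic structure together with a density/uniqueness argument to extend them to all smooth sections. Recall that a holomorphic vector bundle $\mathcal{E}=(E,\mathcal{D}^{Dolb})$ has underlying real bundle $E\to M$, and a key structural fact (used throughout the paper, cf.\ Examples~\ref{ex:holrev} and \ref{ex:holrevext}) is that holomorphic sections $\sigma\in\E(U)$, regarded as sections of $E$, are locally $C^\infty(M,\C)$-module generators: any smooth complex section of $E$ over a sufficiently small open set can be written as $\sum_i f_i\sigma_i$ with $f_i\in C^\infty(U,\C)$ and $\sigma_i$ holomorphic. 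This is what will force uniqueness and allow the extension.

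First I would define the anchor: $\an:E\to TM$ is declared on holomorphic sections by $\an(\sigma):=2\,\Real(\Panchor(\sigma))$, equivalently by requiring (1), and one checks this is $C^\infty(M,\C)$-linear in the appropriate sense (using that $\Panchor$ is $\O$-linear and the Leibniz rule \eqref{eq:Leibniz}/\eqref{eq:Dhol} relating $\dob$ to multiplication by $f$), so it extends uniquely to a smooth real bundle map $E\to TM$; here one uses that $\Panchor$ being holomorphic means $\dob\Panchor=0$, i.e.\ $\Panchor$ intertwines the Dolbeault $1$-derivations on $E$ and on $T^{1,0}M$. Next the pairing: define $\<\cdot,\cdot\>_\Cinfty$ on holomorphic sections by (3), $\<\sigma_1,\sigma_2\>_\Cinfty=\Real\<\sigma_1,\sigma_2\>$, where $\<\sigma_1,\sigma_2\>\in\O(U)$; since $\<\cdot,\cdot\>$ is $\O$-bilinear and symmetric and nondegenerate, and taking real parts of a holomorphic function together with the identity $\Real(\ii z)=-\Imag(z)$ lets one recover the full complex pairing from its values on real combinations, one checks $\R$-bilinearity and symmetry, and then extends $C^\infty(M)$-bilinearly; nondegeneracy of $\<\cdot,\cdot\>_\Cinfty$ follows because $\Real$ of a nondegenerate $\C$-bilinear form on a complex vector space is a nondegenerate $\R$-bilinear form on the underlying real space. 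The bracket $\Cour{\cdot,\cdot}_\Cinfty$ is then forced: on holomorphic sections it must equal $\Cour{\cdot,\cdot}$ by (2), and \eqref{C3} (with the now-defined smooth anchor) dictates $\Cour{\sigma_1,f\sigma_2}_\Cinfty=f\Cour{\sigma_1,\sigma_2}_\Cinfty+(\Lie_{\an(\sigma_1)}f)\sigma_2$ for $f\in C^\infty(M,\C)$, which determines the bracket on all smooth complex sections and hence, by restriction, on real ones — provided this prescription is well defined, i.e.\ independent of how a section is written as $\sum f_i\sigma_i$.

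The main obstacle, and the heart of the proof, is precisely that well-definedness and the verification that the resulting $(\an,\<\cdot,\cdot\>_\Cinfty,\Cour{\cdot,\cdot}_\Cinfty)$ satisfies axioms \eqref{C1}--\eqref{C5}. For well-definedness one must show $\Cour{f\sigma_1+g\sigma_1',\tau}_\Cinfty$ computed via the Leibniz prescription is consistent; the subtle point is the first slot, where naively \eqref{C3} only controls the second slot — here one invokes \eqref{C4}/\eqref{C5} in the holomorphic setting (which hold by hypothesis on holomorphic inputs) to derive the corresponding first-slot Leibniz rule $\Cour{f\sigma_1,\sigma_2}_\Cinfty=f\Cour{\sigma_1,\sigma_2}_\Cinfty-(\Lie_{\an(\sigma_2)}f)\sigma_1+\an^*(\Lie_{\an(\sigma_1)}f\cdot\ldots)$, more precisely $\Cour{f\sigma_1,\sigma_2}_\Cinfty = f\Cour{\sigma_1,\sigma_2}_\Cinfty + (\an^* d f)\<\sigma_1,\sigma_2\>_\Cinfty - (\Lie_{\an(\sigma_2)}f)\sigma_1$, obtained by combining the second-slot rule with \eqref{C4}. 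Once both Leibniz rules are in hand, each axiom \eqref{C1}--\eqref{C5} is $C^\infty(M,\C)$-(multi)linear or satisfies a controlled Leibniz failure in each argument, so it suffices to verify it on holomorphic sections and holomorphic functions, where it reduces to the corresponding holomorphic axiom after separating real and imaginary parts (using, for \eqref{C4} and \eqref{C5}, that $d\Real(h)=\Real(\partial h)$ for holomorphic $h$ and that $\an^*$ is the real part of $\Panchor^*$, which follows from (1) by dualizing). I would organize this as: (i) reduce each axiom to its restriction to holomorphic data by a multilinearity/Leibniz bootstrapping lemma; (ii) check the holomorphic restrictions by taking real parts of the holomorphic Courant axioms; (iii) conclude uniqueness from the fact that holomorphic sections generate, which also shows the construction is forced at every step. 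A minor additional point is checking that $\Cour{\cdot,\cdot}_\Cinfty$, a priori $\C$-valued, actually sends pairs of \emph{real} sections to real sections — this follows because the holomorphic bracket of holomorphic sections lands in holomorphic sections and the Leibniz extension respects the real structure, or alternatively is forced by the uniqueness of a real Courant structure satisfying (1)--(3).
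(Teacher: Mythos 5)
Your proposal is correct and follows essentially the same strategy as the paper: uniqueness via the fact that local holomorphic frames generate all smooth sections over $C^\infty(U,\C)$, existence by extending the anchor, pairing and bracket through ($C^\infty$-bi)linearity and the Leibniz rules (with the first-slot rule supplied by \eqref{C4}), and verification of the axioms by reduction to the holomorphic case. The paper packages the extension and its well-definedness via the module isomorphism $C^\infty(U,\C)\otimes_{\mathcal{O}(U)}\E(U)\cong\Gamma(U,E)$, which is just a cleaner formalization of your "independence of the representation $\sum f_i\sigma_i$" check.
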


\begin{proof}
We outline the proof, split in two parts:  uniqueness and existence.
\medskip

\paragraph{\bf Uniqueness} We will show that the restriction of the real smooth Courant algebroid structure (satisfying (1), (2) and (3)) to holomorphic sections  is sufficient to completely determine it. Let $U \subset M$ be an open subset for which there exists a frame of holomorphic sections $\{\sigma_1, \dots, \sigma_n\} \subset \E(U)$. Any smooth section $\sigma$ of $E$  over $U$ can be expressed uniquely as 
\begin{equation}\label{eq:holcomb}
\sigma = \sum_{k=1}^n f_k \sigma_k + g_k \,l(\sigma_k), 
\end{equation}
for $f_k, \, g_k \in C^{\infty}(U)$, where $l: E\to E$ the fibrewise complex structure on $E$. From the $C^\infty(U)$-linearity of $\an$, we see that $\an$ is completely characterized by $\mathfrak{p}(\sigma_k)$ (note that the $\C$-linearity of $\mathfrak{p}$ implies that $\an \circ l = r \circ \an)$. Also, the $C^{\infty}(U)$-bilinearity of $\<\cdot, \cdot\>_\Cinfty$ implies that it is determined by $\<\sigma_j, \sigma_k\>$, noticing that 
\begin{equation}\label{eq:l*}
\<\sigma_j, l(\sigma_k)\>_{\Cinfty} = -\mathrm{Im}(\<\sigma_j,\sigma_k\>).
\end{equation}
Finally, the Leibniz equation \eqref{C3} implies that $\Cour{\cdot,\cdot}_\Cinfty$ is completely determined by $\Cour{\sigma_j,\sigma_k}$ (note that $\Cour{\sigma_j, l(\sigma_k)}_\Cinfty= l(\Cour{\sigma_j,\sigma_k})$).  
\medskip

\paragraph{\bf Existence} By uniqueness, it suffices to describe the real smooth Courant algebroid structure locally. We will use the isomorphism of $C^\infty(U)$-modules
$$
C^{\infty}(U,\C) \otimes_{\O(U)} \E(U) \ni (f+\ii g) \otimes \sigma \;  \mapsto \; f \sigma + g \, l(\sigma)  \in \Gamma(U,E)
$$
to construct the Courant algebroid structure locally.  The anchor and the bracket are given by
$$
\an(\psi \otimes \sigma) = \mathrm{Re}(\psi) \an(\sigma) + \mathrm{Im}(\psi) r(\an(\sigma)), \,\,\,\,\, \<\psi_1\otimes \sigma_1, \psi_2 \otimes \sigma_2\>_\Cinfty = \mathrm{Re}(\psi_1 \psi_2 \<\sigma_1, \sigma_2\>).
$$
Note that $\<\cdot,\cdot\>_\Cinfty$ is non-degenerate and, for $h \in \O(U)$,
\begin{equation}\label{eq:anchor_on_hol_functions}
\Lie_{\an(\psi \otimes \sigma)} h = \psi \, \Lie_{\mathfrak{p}(\sigma)}h.
\end{equation}
We can now define $\an^*: \Omega^1(U) \to \newcourant$ naturally as 
\begin{equation*}
\<\an^*(\alpha), \psi\otimes \sigma\>_\Cinfty = i_{\an(\psi\otimes \sigma)}\,\alpha.
\end{equation*}
The Courant bracket on $\newcourant$ is given by 
\begin{align}
\label{dfn:smooth_cour}\Cour{\psi_1 \otimes \sigma_1, \psi_2 \otimes \sigma_2}_\Cinfty = & \psi_1 \psi_2 \otimes \Cour{\sigma_1,\sigma_2} + \Lie_{\an(\psi_1\otimes \sigma_1)}(\psi_2) \otimes \sigma_2 - \Lie_{\an(\psi_2\otimes \sigma_2)}(\psi_1) \otimes \sigma_1
\\
\nonumber & + \Re(\psi_2 \<\sigma_1, \sigma_2\>) \an^* d\Re(\psi_1)- \Im(\psi_2\<\sigma_1,\sigma_2\>) \an^* d\Im(\psi_1).
\end{align}
One can check that $\an$, $\<\cdot,\cdot\>_\Cinfty$ and $\Cour{\cdot,\cdot}_\Cinfty$ are well-defined in the sense that their definitions agree on $(h\psi) \otimes \sigma$ and $\psi\otimes (h\sigma)$, for $h \in \O(U)$. The axioms \eqref{C1}--\eqref{C5} follow by direct inspection. 
\end{proof}


Following Examples~\ref{ex:hol} and \ref{ex:holrev}, we regard a holomorphic vector bundle $\mathcal{E}$ as a pair $(E, \mathcal{D}^{\mathrm{Dolb}})$, where
$E\to M$ is a real, smooth vector bundle endowed with a Dolbeault
1-derivation $\mathcal{D}^{\mathrm{Dolb}}$ defining its holomorphic structure. 
Our main result in this section is an equivalence between holomorphic Courant algebroid structures on $\mathcal{E}$ and real Courant algebroid structures on $E$ for which $\mathcal{D}^{\mathrm{Dolb}}$ is a Courant 1-derivation.



\begin{thm}\label{thm:holcourant} Consider a holomorphic vector bundle $\mathcal{E} = (E, \mathcal{D}^{\mathrm{Dolb}})$.
\begin{itemize}
\item[(a)] If $(\Panchor,  \<\cdot,\cdot\>, \Cour{\cdot,\cdot})$ is a holomorphic Courant algebroid structure on $\mathcal{E}$, then $\mathcal{D}^{\mathrm{Dolb}}$ is a Courant 1-derivation of the underlying real Courant algebroid structure on $E$ (so $\mathcal{D}^{\mathrm{Dolb}}$ makes $E$ into a Courant-Nijenhuis algebroid). 

\item[(b)] If $(\an,  \<\cdot, \cdot\>, \Cour{\cdot, \cdot})$ is a Courant algebroid structure on $E$ such that  
$\mathcal{D}^{\mathrm{Dolb}}$ is a Courant 1-derivation,
then $\Cour{\cdot, \cdot}$ restricts to a $\C$-bilinear bracket 
on $\E$ in such a way that it defines a holomorphic Courant algebroid structure on $\mathcal{E}$ together with
$$
\Panchor(\sigma) = \frac{1}{2}(\an(\sigma) - \ii r(\an(\sigma))), \,\,\,\,\,\, 
\< \sigma_1, \sigma_2 \>_{\mathrm{hol}} = \<\sigma_1, \sigma_2\> - \ii \<\sigma_1, l(\sigma_2)\>.
$$
\end{itemize}
\end{thm}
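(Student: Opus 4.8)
The argument in both directions rests on one observation, implicit in Examples~\ref{ex:hol}--\ref{ex:holrev}: writing $\mathcal{D}^{\mathrm{Dolb}}=(D,l,r)$, a real section $\sigma$ of $E$ is holomorphic exactly when $D_X(\sigma)=0$ for all $X\in\frakx(M)$, and for such $\sigma$ the concomitant $C(\sigma,\sigma')=\<D_{(\cdot)}(\sigma),\sigma'\>$ of Definition~\ref{def:Cder} vanishes identically. Recall also that, being Dolbeault, $\mathcal{D}^{\mathrm{Dolb}}$ already satisfies the Nijenhuis and almost complex equations \eqref{eq:square_minus_id}, so $r$ is a complex structure, $l$ the fibrewise complex structure of $\mathcal{E}$, $l^*=l$ with respect to $\<\cdot,\cdot\>_{\Cinfty}$ (that pairing being the real part of a $\C$-bilinear one, cf.\ Proposition~\ref{prop:smooth_real_cour}(3)), and $\an(\sigma)=\Panchor(\sigma)+\overline{\Panchor(\sigma)}$ by Proposition~\ref{prop:smooth_real_cour}(1). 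For part~(a), the plan is to verify $\mathcal{D}^*=\mathcal{D}$ and the Courant equations \eqref{eq:im1}--\eqref{eq:im4} for the real structure of Proposition~\ref{prop:smooth_real_cour}. Self-duality I would obtain structurally: the holomorphic pairing is a holomorphic bundle isomorphism $\mathcal{E}\cong\mathcal{E}^*$ whose realification is the identification $E\cong E^*$ used in Definition~\ref{def:Cder}; by functoriality of the Dolbeault-$1$-derivation correspondence it intertwines $\mathcal{D}^{\mathrm{Dolb}}$ with the dual $1$-derivation on $\mathcal{E}^*$ (Example~\ref{ex:holrevext}), whence $\mathcal{D}^*=\mathcal{D}$. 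For \eqref{eq:im1}--\eqref{eq:im4}, both sides of each equation transform the same way under $C^\infty$-rescaling of the section arguments (a routine check using \eqref{eq:Leibniz}, the Courant axioms and \eqref{eq:im1}), so it suffices to verify them on a local frame of holomorphic sections; there $D$, $C$ and all $D_X(\Cour{\cdot,\cdot})$ vanish, $\Cour{\cdot,\cdot}$ is the holomorphic bracket, and \eqref{eq:im1}--\eqref{eq:im4} collapse respectively to $\an\circ l=r\circ\an$ (from $\C$-linearity of $\Panchor$), $D^r_X(\an(\sigma_k))=0$ (since $\an(\sigma_k)$ is the real part of the holomorphic vector field $\Panchor(\sigma_k)$), $\Cour{\sigma_j,l(\sigma_k)}=l(\Cour{\sigma_j,\sigma_k})$, and $0=0$ (the last two from $\C$-bilinearity of the holomorphic bracket).

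For part~(b) I would first show $\Cour{\cdot,\cdot}$ preserves holomorphicity: applied to holomorphic $\sigma_1,\sigma_2$, every term on the right of \eqref{eq:im4} vanishes ($D\sigma_i=0$ kills the first four, $C(\sigma_1,\sigma_2)\equiv 0$ the last), so $D_X(\Cour{\sigma_1,\sigma_2})=0$ and the bracket restricts to $\E$; moreover \eqref{eq:im3} at once gives $\Cour{\sigma_1,l(\sigma_2)}=l(\Cour{\sigma_1,\sigma_2})$ there. Next I would build the holomorphic pairing: for holomorphic $\sigma_1$, the functional $\<\sigma_1,\cdot\>_{\Cinfty}\in\Gamma(E^*)$ is a holomorphic section of $\mathcal{E}^*$ because $\mathcal{D}^*=\mathcal{D}$, so by Example~\ref{ex:holrevext} the corresponding holomorphic $\C$-linear functional on $(E,l)$ is $\<\sigma_1,\cdot\>_{\Cinfty}-\ii\<\sigma_1,l(\cdot)\>_{\Cinfty}$; evaluating on holomorphic $\sigma_2$ produces the holomorphic function $\<\sigma_1,\sigma_2\>_{\mathrm{hol}}$, symmetric (via $l^*=l$), $\O$-bilinear, and non-degenerate because its real part is $\<\cdot,\cdot\>_{\Cinfty}$. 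The remaining half of $\C$-bilinearity, $\Cour{l(\sigma_1),\sigma_2}=l(\Cour{\sigma_1,\sigma_2})$, I would then deduce from \eqref{C4}, the dual $l\circ\an^*=\an^*\circ r^*$ of \eqref{eq:im1}, and the relation $d\<l(\sigma_1),\sigma_2\>_{\Cinfty}=r^*\,d\<\sigma_1,\sigma_2\>_{\Cinfty}$ expressing holomorphicity of $\<\sigma_1,\sigma_2\>_{\mathrm{hol}}$; this is the single point where the order of steps matters, since $\C$-bilinearity in the first slot genuinely requires the pairing to be holomorphic.

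It then remains to see that $\Panchor(\sigma)=\tfrac12(\an(\sigma)-\ii r(\an(\sigma)))$ is a holomorphic bundle map and that the axioms transfer. The map $\Panchor$ is $\C$-linear by \eqref{eq:im1} and $r^2=-\mathrm{id}$, and it sends holomorphic sections to holomorphic vector fields since \eqref{eq:im2} turns $D\sigma=0$ into $D^r_X(\an(\sigma))=0$, i.e.\ $\an(\sigma)$ is the real part of a holomorphic vector field with $(1,0)$-part $\Panchor(\sigma)$. For the axioms on holomorphic data: \eqref{C1} restricts verbatim; \eqref{C2} holds because the $(1,0)$-component of $[\an(\sigma_1),\an(\sigma_2)]$ is $[\Panchor(\sigma_1),\Panchor(\sigma_2)]$, using $[Z,\overline{W}]=0$ for holomorphic vector fields $Z,W$; \eqref{C3} and \eqref{C5} follow from their real versions together with $\mathcal{L}_{\an(\sigma)}f=\mathcal{L}_{\Panchor(\sigma)}f$ for holomorphic $f$ (and, for \eqref{C5}, applying the real identity also with $l(\sigma_3)$ in place of $\sigma_3$ and combining); and \eqref{C4} follows from its real counterpart once one checks that $\an^*(d\,\Re h)$ is the realification of $\Panchor^*(\partial h)$ for holomorphic $h$.

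The conceptual input here is modest; the main obstacle is bookkeeping, keeping straight the three identifications at play simultaneously---the real bundle $E$ versus the complex bundle $(E,l)$, the real isomorphism $E\cong E^*$ versus the holomorphic $\mathcal{E}\cong\mathcal{E}^*$, and $TM$ versus $T^{1,0}M$---together with the interplay of $\overline{\partial}$, the real pairing and holomorphicity, and the ordering issue flagged above. A leaner, equivalent route would pass to the complexification $E\otimes\C=E^{1,0}\oplus E^{0,1}$, identify $\mathcal{E}$ with the eigenbundle $E^{1,0}$ for the eigenvalue $\ii$, and observe that \eqref{eq:im1}--\eqref{eq:im4} restrict, on $\overline{\partial}$-closed sections of $E^{1,0}$, to precisely the holomorphic Courant axioms for $(\Panchor,\<\cdot,\cdot\>_{\mathrm{hol}},\Cour{\cdot,\cdot})$.
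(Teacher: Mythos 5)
Your proposal is correct and follows essentially the same route as the paper: for part (a), show that the defects of (CN1)--(CN4) are tensorial and vanish on a local frame of holomorphic sections, and for part (b), use the vanishing of $D$ and of the concomitant $C$ on holomorphic sections to restrict the anchor, pairing and bracket to the holomorphic category (the paper is terser than you on self-duality and on first-slot $\C$-linearity of the restricted bracket, both of which you handle correctly). The one point to state more carefully is that the defect of (CN4) is \emph{not} tensorial on its own: its failure of $C^\infty$-linearity in the section arguments is controlled by the defect of (CN3) and by the expression $\<D_X\sigma_1,\sigma_2\>+\<\sigma_1,D_X\sigma_2\>-\Lie_X\<\sigma_1,l(\sigma_2)\>+\Lie_{r(X)}\<\sigma_1,\sigma_2\>$ (essentially the self-duality $\mathcal{D}^*=\mathcal{D}$ written out), so the frame argument for (CN4) can only be run after these have been shown to vanish --- your ordering accommodates this, but the blanket claim that ``both sides transform the same way under rescaling'' should be read modulo that bootstrap, which the paper makes explicit via its auxiliary quantities $W_1$ and $W_3$.
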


The constructions in (a) and (b) are inverses of one another.

\begin{proof}


To prove part (a), we must show that $\mathcal{D}^{\mathrm{Dolb}} = (D, l,r)$ is a Courant 1-derivation, i.e., that it satisfies  (CN1)--(CN4) in Def.~\ref{def:Cder} with respect to the real Courant algebroid $E$. The $\C$-linearity of $\Panchor$ implies that $\an \circ l = r \circ \an$, so (CN1) holds. To verify conditions (CN2), (CN3) and (CN4), we must check,  for each open subset $U \subset M$, the vanishing of   the following expressions:
\begin{flalign*}
&W_2(X,\sigma_1)  =   \an (D_X(\sigma_1)) - D_X^r(\an(\sigma_1)),\\
&W_3(\sigma_1,\sigma_2) = l(\Cour{ \sigma_1, \sigma_2 }_\Cinfty)- \left( \Cour{ \sigma_1,l(\sigma_2) }_\Cinfty - D_{\an(\sigma_2)}(\sigma_1) - \an^*( C(\sigma_1,\sigma_2)) \,\right), \\
&W_4(X,\sigma_1,\sigma_2)  =   D_X(\Cour{ \sigma_1,\sigma_2 }_\Cinfty) - \left(\Cour{ \sigma_1,D_X(\sigma_2) }_\Cinfty - \Cour{ \sigma_2,D_X(\sigma_1)}_\Cinfty + D_{[\an(\sigma_2),X]}(\sigma_1)\right.\\
&\qquad\qquad\qquad \quad \left. - D_{[\an(\sigma_1),X]}(\sigma_2) - \an^*(i_X\,dC(\sigma_1,\sigma_2))\,\right),
\end{flalign*}
for $X \in \frakx(U)$ and $\sigma_1, \sigma_2$ smooth sections of $E$ over $U$.

The vanishing of $W_2$ follows from the fact that, since  $\an$ is holomorphic, it must intertwine $\mathcal{D}^{\mathrm{Dolb}}$ and $\mathcal{D}^r$ (recalling that $\mathcal{D}^r$ is the Dolbeault 1-derivation encoding the holomorphic structure on $TM$, see Example~\ref{ex:hol}), so (CN2) holds.

 Using that $\an \circ l = r \circ \an$ and $l=l^*$ (by \eqref{eq:l*}), one can show that
 $W_3$ is $C^\infty(U)$-linear in each entry, whereas $W_4$ is $C^ \infty(U)$-linear in the first entry and satisfies
 \begin{align*}
W_4(X, \sigma_1, f\sigma_2) - fW_4(X,\sigma_1, \sigma_2) & =  (\Lie_X f)  \,W_3(\sigma_1,\sigma_2) - (\Lie_{W_2(X,\sigma_1)}f)\, \sigma_2 \\
& = (\Lie_X f)  \,W_3(\sigma_1,\sigma_2), \\
W_4(X, f \sigma_1, \sigma_2) - fW_4(X,\sigma_1, \sigma_2) & =   - (\Lie_X f) W_3 (\sigma_2, \sigma_1)  - W_1(X,\sigma_1,\sigma_2) \an^*df
\end{align*}
where 
$$
W_1(X,\sigma_1,\sigma_2)  =  \<D_X(\sigma_1),\sigma_2\>_\Cinfty + \<\sigma_1, D_X(\sigma_2)\>_\Cinfty - \Lie_X\<\sigma_1, l(\sigma_2)\>_\Cinfty + \Lie_{r(X)}\<\sigma_1,\sigma_2\>_\Cinfty.
$$
Note that $W_1$ is $C^\infty(U)$-linear in each entry due to \eqref{eq:Leibniz}.

We claim that $W_1$, $W_3$ and $W_4$ vanish on holomorphic sections. Indeed, for $W_3$ and $W_4$ this follows from the facts that $D_X$ is zero on holomorphic sections (by \eqref{eq:Dhol})
and that $\Cour{\cdot,\cdot}_\Cinfty$ restricts to the $\C$-bilinear  holomorphic Courant bracket on holomorphic sections. For $W_1$, we use the additional fact that
$$
\Lie_X\<\sigma_1, l(\sigma_2)\>_\Cinfty - \Lie_{r(X)}\<\sigma_1,\sigma_2\>_\Cinfty = -\Im(\Lie_{X+\ii r(X)}\<\sigma_1,\sigma_2\>)=0,
$$
for holomorphic $\sigma_1, \sigma_2 \in \E(U)$, since $\<\sigma_1,\sigma_2\>\in \O(U)$.

Since smooth sections can be locally expressed by means of a frame of holomorphic sections as in \eqref{eq:holcomb}, the $C^\infty(U)$-multilinearity of $W_1$ and $W_3$ implies that they vanish. This in turn implies that $W_4$ is also multilinear over $C^\infty(U)$, and hence also vanishes.
This concludes the proof of (a). 

Let us prove part (b). Since $D$ vanishes on holomorphic sections, \eqref{eq:im4} implies that $\Cour{\E(U), \E(U)}\subset \E(U)$, and it follows from \eqref{eq:im3} that the restricted bracket on holomorphic sections is $\C$-bilinear. Regarding the anchor, \eqref{eq:im1} implies that $\Panchor$ is $\C$-linear, and \eqref{eq:im2} says that it is holomorphic. It remains to check that $\<\E(U),\E(U)\> \subset \O(U)$. This is a consequence of the fact that, for local holomorphic sections $\sigma_1, \sigma_2$, the duality equation  \eqref{defn:dual_derivation} implies that 
$$
\Lie_{X+\ii r(X)}\<\sigma_1,\sigma_2\>_\hol =  \<\dob_{X+\ii r(X)}\,\sigma_1, \sigma_2 \>_\hol +  \<\sigma_1, \dob_{X+\ii r(X)}\,\sigma_2\>_\hol =0,
$$
where $\dob$ is the flat $T^{0,1}$-connection  \eqref{eq:Dhol}. Therefore $\<\sigma_1,\sigma_2\>_\hol$ is holomorphic, thus concluding the proof.
\end{proof}

\begin{ex} {\em
Let $(M,r)$ be a complex manifold and $H$ a closed holomorphic 3-form.
Then $\T M$, viewed as a holomorphic vector bundle, carries an $H$-twisted holomorphic Courant algebroid structure (analogous to Example~\ref{ex:Htwisted}). From the perspective of Theorem~\ref{thm:holcourant}, this corresponds to the fact that $\mathcalbb{D}^r$ is a Courant 1-derivation of the $H$-twisted Courant bracket on $\T M$, viewed as a real vector bundle (see Example~\ref{ex:Dr}).  }
\hfill $\diamond$
\end{ex}

\begin{rmk}{\em
Any holomorphic Courant algebroid whose  underlying real Courant algebroid is $\T M$ with the $H$-twisted Courant bracket, for a closed 3-form $H$, is completely characterized by a Courant-Nijenhuis 1-derivation $\calD$ on $(\T M, H)$ determined by the data $r$, $g$ and $\Sigma$ (as in \eqref{dfn:std_der}), where $r$ is the complex structure on $M$. Then $g$ and $\Sigma$ must satisfy the equations corresponding to the fact that $\calD$  is Courant-Nijenhuis and almost complex. If the 2-form $\omega^\flat = g^\flat \circ r$ is closed, Proposition \ref{prop:closed_hol} implies that the holomorphic Courant algebroid determined by $\calD$ is isomorphic to the one determined by $\mathcalbb{D}^r$. In general this is not the case, but one can always use $\omega$ to gauge away $g$ leaving the construction of more general holomorphic Courant algebroids as a problem of choosing $\Sigma$ suitably; this provides a different approach to the classification of holomorphic Courant algebroids in \cite[Prop.1.3]{gualt14}. }
\hfill $\diamond$
\end{rmk}


\section{Lagrangian splittings and doubles}\label{sec:double}

\subsection{Lagrangian splittings of Courant algebroids}\label{subsec:proto}
Let $(E,\<\cdot,\cdot\>, \an, \Cour{\cdot,\cdot} )$ be a Courant algebroid. By a {\em lagrangian splitting} of $E$ we mean a decomposition 
$E = A \oplus B$, where $A$ and $B$ are lagrangian subbundles. In this case, there is an isomorphism $B\cong A^*$ via the pairing that yields an identification $E=A\oplus A^*$
as pseudo-euclidean vector bundles, where $A\oplus A^*$ is equipped with its canonical pairing
$$
\<(a,\alpha),(b,\beta)\>:= \beta(a) + \alpha(b).
$$

Let $p_A: E\to A$ and $p_{A^*}: E\to A^*$ the natural projections onto $A$ and $A^*$, respectively.
The anchor and bracket on $E$ induce the following structures on $A$ and $A^*$: an anchor 
$\rho:= \an|_A: A\to TM$, along with an $\R$-bilinear bracket $[\cdot,\cdot]$ on $\Gamma(A)$ 
and an element $\varphi \in \Gamma(\wedge^3 A^*)$
given by
$$
[a,b] := p_A(\Cour{(a,0),(b,0)}), \qquad
i_bi_a\varphi := p_{A^*}(\Cour{(a,0), (b,0)}),
$$
and, similarly, an anchor $\rho_*: A^*\to TM$, bracket $[\cdot,\cdot]_*$ on $\Gamma(A^*)$ and element $\chi \in \Gamma(\wedge^3 A)$. Note that $A$ and $A^*$, endowed with their anchors and brackets, become pre-Lie algebroids (see $\S$ \ref{subsec:IM}). We denote the corresponding operators by 
$$
d_A: \Gamma(\wedge^\bullet A^*)\to \Gamma(\wedge^{\bullet + 1}A^*), \qquad d_{A^*}: \Gamma(\wedge^\bullet A)\to \Gamma(\wedge^{\bullet + 1}A).
$$

With respect to these structures, the Courant bracket on $E=A\oplus A^*$ is given by
\begin{equation}\label{eq:double_bracket}
    \Cour{(a,\alpha),(b, \beta)}= ( [a,b] + \mathcal{L}_{\alpha}b- i_\beta\, d_{A^*}a + i_\beta i_\alpha \chi, \, [\alpha,\beta]_* + \mathcal{L}_a\beta-i_b \, d_A\alpha + i_b i_a \varphi),
\end{equation}
where $\mathcal{L}_\alpha=i_\alpha d_{A^*} + d_{A^*} i_\alpha$, similarly for $\mathcal{L}_a$.

Let $(A, \rho, [\cdot,\cdot])$ and $(A^*, \rho_*, [\cdot,\cdot]_*)$ be pre-Lie algebroids in duality, further equipped with sections $\varphi \in \Gamma(\wedge^3 A^*)$ and $\chi \in \Gamma(\wedge^3 A)$.
The pair $(A,A^*)$ is called a {\em proto bialgebroid}  when the anchors, brackets and 3-sections satisfy compatibility conditions (spelled out in \cite{roy:quasi}, see also \cite{yvette:quasitwisted}) saying that the bracket \eqref{eq:double_bracket} on $\Gamma(A\oplus A^*)$ makes $A\oplus A^*$ into a Courant algebroid with anchor $\an = \rho + \rho_*$ and canonical pairing, called the {\em double} of $(A,A^*)$.

We therefore obtain the following equivalence: any Courant algebroid equipped with a lagrangian splitting yields a proto bialgebroid, and any proto bialgebroid gives rise, by means of its double, to a Courant algebroid endowed with a lagrangian splitting.

The following are special cases of interest of this correspondence.

\begin{itemize}
\item When $\varphi=0$ and $\chi=0$, a proto bialgebroid $(A,A^*)$ is a Lie bialgebroid, i.e.,  $(A, A^*)$ is a pair of Lie algebroids $(A, \rho, [\cdot, \cdot]) $ and $(A^*,\rho_*, [\cdot, \cdot]_{*})$ in duality such that the Lie-algebroid differential $d_{A^*}$ and the Schouten bracket $[\cdot,\cdot]$ on $\Gamma(\wedge A)$ satisfy
\begin{equation}\label{eq:bialg}
d_{A^*}[a,b] = [d_{A^*}a, b] + [a, d_{A^*}b], \qquad \forall \, a,  b \in \Gamma(A).
\end{equation}
Lie bialgebroids are in correspondence with Courant algebroids equipped with a lagrangian splitting by Dirac structures  \cite{LiuXuWe}, known as {\em Manin triples}.

\item When $\chi=0$, a proto bialgebroid $(A,A^*)$ is a Lie quasi-bialgebroid \cite{roy:quasi}, in which case $A$ is a Lie algebroid, $d_{A^*}$ satisfies \eqref{eq:bialg}, $d_{A^*}^2=[\chi,\cdot]$, and $d_{A^*}\chi=0$. Lie quasi-bialgebroids correspond to Courant algebroids equipped with a splitting given by a Dirac structure and a lagrangian complement, known as {\em Manin quasi-triples}. 
\end{itemize}

For a Courant algebroid $E\to M$, a Lagrangian splitting
$E=A\oplus A^*$ induces a bivector field $\pi$ on $M$ via
\begin{equation}\label{eq:pi}
\pi^\sharp = \rho_* \circ \rho^*: T^*M \to TM
\end{equation}
that satisfies 
$$
\frac{1}{2}[\pi,\pi]= \rho(\chi)+ \rho_*(\varphi),
$$
see \cite[$\S$ 3.2 and 3.4]{liblandmein}. In particular $\pi$ is a Poisson structure when $(A,A^*)$ is a Lie bialgebroid.

\subsection{Lagrangian splittings of Courant 1-derivations}

Consider a Courant algebroid $E$ equipped with a lagrangian splitting, that we write as $E= A \oplus A^*$. 

\bigskip

\noindent{\bf Assumption}. {\em We assume throughout this subsection that
 $$
 \mathcalbb{D} = (\D, \ell, r)
 $$ 
 is a 1-derivation on the vector bundle $E$ that keeps the splitting invariant, i.e., the subbundles $A$ and $A^*$ are $\mathcalbb{D}$-invariant (as in Def.~\ref{def:compat}). We will further assume that
 $\mathcalbb{D}$ is symmetric ($\mathcalbb{D}=\mathcalbb{D}^*$), in which case 
 the restricted 1-derivations on $A$ and $A^*$ are dual to one another.}
 
 \bigskip
 
 We denote the 1-derivation on $A$ by $\mathcal{D}=(D, l, r)$, so that the 1-derivation on $A^*$ is $\mathcal{D}^*= (D^*, l^*, r)$ and
$$
\D = (D, D^*), \qquad \ell= (l,l^*).
$$

We keep the notation from $\S$ \ref{subsec:proto} for the pre-Lie algebroids $(A, \rho, [\cdot,\cdot])$ and $(A, \rho_*, [\cdot,\cdot]_*)$, with 3-sections $\chi \in \Gamma(\wedge^3 A)$ and $\varphi \in \Gamma(\wedge^3 A^*)$.

\begin{thm}\label{thm:manin}
The 1-derivation $\mathcalbb{D} = (\D, \ell, r)$ is a Courant 1-derivation of $E=A\oplus A^*$ if and only if the following conditions hold:
\begin{itemize}
    \item $\mathcal{D}$ is compatible with the pre-Lie algebroid $(A, \rho, [\cdot,\cdot])$, and
    $$
    \varphi \in \Gamma_{l}(\wedge^3 A^*), \qquad D^*(\varphi) =0;
    $$
      
    \item $\mathcal{D}^*$ is compatible with the pre-Lie algebroid $(A^*, \rho_*, [\cdot,\cdot]_*)$, and
  $$
    \chi \in \Gamma_{l^*}(\wedge^3 A), \qquad D (\chi) =0.
    $$  
\end{itemize}
Moreover, $\mathcalbb{D}$ is Nijenhuis (resp. Dolbeault)  if and only if so is $\mathcal{D}$.
\end{thm}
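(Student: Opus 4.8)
\textbf{Proof plan for Theorem~\ref{thm:manin}.}
The strategy is to exploit the explicit formula \eqref{eq:double_bracket} for the Courant bracket on $E = A\oplus A^*$ and to decompose the Courant compatibility equations \eqref{eq:im1}--\eqref{eq:im4} for $\mathcalbb{D}$ according to the splitting. Since $A$ and $A^*$ are $\mathcalbb{D}$-invariant, the operators $\ell$ and $\D_X$ are block-diagonal with respect to $E = A\oplus A^*$, so testing \eqref{eq:im1}--\eqref{eq:im4} on pairs of sections of the forms $(a,0)$ and $(0,\alpha)$ will produce a system of ``pure'' and ``mixed'' identities. First I would record that \eqref{eq:im1} and \eqref{eq:im2} for $\mathcalbb{D}$ are, by restriction to $A$ and to $A^*$, exactly \eqref{eq:im1_lie}--\eqref{eq:im2_lie} for $\mathcal{D}$ on $(A,\rho)$ and for $\mathcal{D}^*$ on $(A^*,\rho_*)$ (recalling $\an = \rho+\rho_*$ and that $\D$, $\ell$ preserve the splitting). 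Then \eqref{eq:im3} evaluated on $((a,0),(b,0))$ gives the $A$-component \eqref{eq:im3_lie} for $\mathcal{D}$ together with the $A^*$-component of the correction term, which by \eqref{eq:double_bracket} is precisely the statement that $\varphi$ is compatible with $l$ in the appropriate sense; evaluated on $((0,\alpha),(0,\beta))$ it gives \eqref{eq:im3_lie} for $\mathcal{D}^*$ and the condition on $\chi$; and the mixed evaluations on $((a,0),(0,\beta))$ turn out to follow from the IM equations via the interpretation of Proposition~\ref{prop:dual_im} applied to $\mathcal{L}_\alpha b$ and $\mathcal{L}_a\beta$.

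The core computational point is the $\varphi$- and $\chi$-conditions. Writing $i_bi_a\varphi = p_{A^*}\Cour{(a,0),(b,0)}$, equation \eqref{eq:im3} for $\mathcalbb{D}$ projected to $A^*$ reads, after cancelling the terms already accounted for by \eqref{eq:im3_lie} and the anchor terms,
$$
l^*(i_bi_a\varphi) - i_{b}i_{l(a)}\varphi - i_{l(b)}i_a\varphi + (\text{terms in }D^*) = 0,
$$
and unpacking $D^*$ via \eqref{eq:extension} one recognizes $D^*_{\rho(\cdot)}(\varphi)$ together with a piece forcing $\varphi(l(\sigma_1),\sigma_2,\cdot) = \varphi(\sigma_1,l(\sigma_2),\cdot)$, i.e. $\varphi\in\Gamma_l(\wedge^3 A^*)$; then Proposition~\ref{prop:dual_im}, specifically \eqref{eq:dual_im_a} applied to the Lie-algebroid-type object, identifies the remaining content as $D^*_{\rho(a)}(\varphi)$ being determined by $d_A$ of lower-degree data, while \eqref{eq:im4} supplies the missing $D^*_X(\varphi)=0$ for all $X$ (using that $\rho$ need not be surjective, one gets the full statement only after combining \eqref{eq:im3} and \eqref{eq:im4}, exactly as in the degree $\geq 1$ part of Proposition~\ref{prop:dual_im}). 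The symmetric argument with the roles of $A$ and $A^*$ exchanged, using that $\mathcalbb{D}=\mathcalbb{D}^*$, yields $\chi\in\Gamma_{l^*}(\wedge^3 A)$ and $D(\chi)=0$. Conversely, assuming all the listed conditions, one reassembles \eqref{eq:im1}--\eqref{eq:im4} for $\mathcalbb{D}$ by running these identifications backwards and invoking Proposition~\ref{prop:dual_im} in both directions.

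For the final ``moreover'' clause: the Nijenhuis equations \eqref{eq:van_Nijenhuis} and the almost-complex equations \eqref{eq:square_minus_id} for $\mathcalbb{D}=(\D,\ell,r)$ involve $r$ (common to both $\mathcal{D}$ and $\mathcal{D}^*$), $\ell=(l,l^*)$, and $\D_X=(D_X, D^*_X)$, and each equation is block-diagonal with respect to $E = A\oplus A^*$. Hence each Nijenhuis (resp.\ almost-complex) equation for $\mathcalbb{D}$ is equivalent to the conjunction of the corresponding equation for $\mathcal{D}$ on $A$ and for $\mathcal{D}^*$ on $A^*$; but by the duality statement recalled after \eqref{defn:dual_derivation} (namely that $\mathcal{D}$ is Nijenhuis iff $\mathcal{D}^*$ is, and likewise for the almost-complex condition), both reduce to the single requirement that $\mathcal{D}$ be Nijenhuis (resp.\ Dolbeault). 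So $\mathcalbb{D}$ is Nijenhuis iff $\mathcal{D}$ is, and Dolbeault iff $\mathcal{D}$ is.

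\textbf{Main obstacle.} The delicate point is the bookkeeping of the correction terms $\an^*(C(\sigma_1,\sigma_2))$ and $\an^*(i_X\,dC(\sigma_1,\sigma_2))$ in \eqref{eq:im3}--\eqref{eq:im4} under the splitting: one must verify that, when $\sigma_1,\sigma_2$ are taken in $A$, the one-form $C(\sigma_1,\sigma_2) = \<D_{(\cdot)}\sigma_1,\sigma_2\>$ pairs only against $A^*$ through $\an^*=\rho^*$ landing in $A$, so that the $\varphi$-terms and the $D^*(\varphi)$-terms do not get contaminated by $\chi$-data, and symmetrically. Keeping these contributions on the correct side of the splitting, and matching them precisely with the $d_A$-terms produced by $\mathcal{L}_a\beta$ and $i_bi_a\varphi$ in \eqref{eq:double_bracket}, is where the proof requires care; once the algebra is organized, each line reduces to an instance of Definition~\ref{def:LAcomp} or of Proposition~\ref{prop:dual_im}.
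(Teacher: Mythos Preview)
Your overall strategy matches the paper's proof exactly: split \eqref{eq:im1}--\eqref{eq:im4} along $E=A\oplus A^*$, read off (IM1)--(IM4) for $\mathcal D$ and $\mathcal D^*$ plus the $\varphi,\chi$ conditions from the pure evaluations, and deduce the mixed evaluations from Proposition~\ref{prop:dual_im}; the ``moreover'' clause is handled just as you say.

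One simplification you are missing makes the bookkeeping in your middle paragraph and in your ``main obstacle'' much cleaner than you anticipate: for $\sigma_1=(a,0)$, $\sigma_2=(b,0)$ the correction one-form $C(\sigma_1,\sigma_2)=\langle \D_{(\cdot)}(a,0),(b,0)\rangle$ vanishes identically, because $\D_X$ preserves $A$ and $A$ is lagrangian. Hence the $\an^*C$ and $\an^*(i_X dC)$ terms disappear on pure $A$--$A$ (and pure $A^*$--$A^*$) inputs, and \eqref{eq:im3} projected to $A^*$ gives simply $l^*(i_b i_a\varphi)=i_{l(b)}i_a\varphi$, i.e.\ $\varphi\in\Gamma_l(\wedge^3 A^*)$, with no $D^*$-terms at all; the condition $D^*(\varphi)=0$ then comes cleanly and entirely from \eqref{eq:im4} on the same pure inputs. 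So there is no contamination between $\varphi$- and $\chi$-data to worry about, and no need to invoke \eqref{eq:dual_im_a} for the $\varphi$-analysis. The $C$-term is nonzero only on mixed inputs $((a,0),(0,\beta))$, and that is precisely where Proposition~\ref{prop:dual_im} (equations \eqref{eq:dual_im_a} and \eqref{eq:dual_im_b} in degree~$1$) is used to show those cases are redundant.
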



\begin{proof}
We must show the equivalence between conditions (CN1)--(CN4) (in Definition~\ref{def:Cder}) for $\mathcalbb{D}$ and (IM1)--(IM4) (in Definition~\ref{def:LAcomp}) for both $\mathcal{D}$ and $\mathcal{D}^*$ along with the conditions on $\varphi$ and $\chi$ in the statement.

It directly follows from $\an= \rho+ \rho_*$ and $\ell = (l, l^*)$ that (CN1) for $\mathcalbb{D}$ is equivalent to (IM1) for $\mathcal{D}$ and $\mathcal{D}^*$. Similarly, the fact that $\D = (D, D^*)$ implies that (CN2) for $\mathcalbb{D}$ is equivalent to (IM2) for $\mathcal{D}$ and $\mathcal{D}^*$.

\medskip

\noindent{\em Claim 1. Assume that \eqref{eq:im1_lie} holds for $\mathcal{D}$ and $\mathcal{D}^*$. Then condition (CN3) for $\mathcalbb{D}$ is equivalent to (IM3) for $\mathcal{D}$ and $\mathcal{D}^*$, as well as $\varphi \in \Gamma_{l}(\wedge^3 A^*)$ and $\chi \in \Gamma_{l^*}(\wedge^3 A)$.}

\smallskip

Let us verify the claim. For sections of type $\sigma_1=(a,0)$, $\sigma_2=(b,0)$, (CN3) becomes
$$
\ell ([a,b] + i_bi_a\varphi) = [a, l(b)] + i_{l(b)}i_a\varphi - D_{\rho(b)}(a),
$$
which splits into
$$
l([a,b]) = [a, l(b)]  - D_{\rho(b)}(a) \;\; \mbox{ and } \;\; l^*(i_bi_a\varphi) = i_{l(b)}i_a\varphi.
$$
These conditions hold for all $a, b\in \Gamma(A)$ if and only if  (IM3) holds for $\mathcal{D}$ and $\varphi \in \Gamma_{l}(\wedge^3 A^*)$. Similarly, (CN3) for sections of type $\sigma_1=(0,\alpha)$, $\sigma_2=(0,\beta)$ is equivalent to (IM3) for $\mathcal{D}^*$ and $\chi \in \Gamma_{l^*}(\wedge^3 A)$. For sections of type $\sigma_1=(0,\alpha)$, $\sigma_2=(b,0)$, (CN3) amounts to two equations:
\begin{align}
l^*(i_b d_A \alpha) &= i_{l(b)}d_A\alpha + D^*_{\rho(b)}(\alpha) + \< D^*_{\rho(\cdot)}(\alpha), b\>, \label{eq:CN3mix1}  \\
  l(\Lie_\alpha b) &= \Lie_\alpha l(b) - \< D^*_{\rho_*(\cdot)}(\alpha), b \>. \label{eq:CN3mix2} 
\end{align}
We will see that \eqref{eq:CN3mix1} (resp. \eqref{eq:CN3mix2}) follows directly from \eqref{eq:dual_im_a} for $\calD$ (resp. $\calD^*$) and $m=1$, which is equivalent to \eqref{eq:im3_lie} under the assumption that \eqref{eq:im1_lie} holds.
Indeed, note that \eqref{eq:dual_im_a} for $m=1$ has the following alternative formulations
\begin{equation}\label{eq:dual_im_alternative}
\<D^*_{\rho(\cdot)}(\alpha), b\> = l^*(i_bd_A\alpha) - i_bd_A(l^*(\alpha)) \,\,\, \text{(similarly,  } \<\alpha, D_{\rho_*(\cdot)}(b)\> = l(i_\alpha d_{A^*}b) - i_\alpha d_A(l(b)).)
\end{equation}
So \eqref{eq:CN3mix1} is obtained from adding up \eqref{eq:dual_im_a} and \eqref{eq:dual_im_alternative}. The second equation \eqref{eq:CN3mix2} follows from the Cartan formula $\Lie_\alpha = i_\alpha d_{A^*}+ d_{A^*} i_\alpha$ together with \eqref{defn:dual_derivation} and \eqref{eq:dual_im_alternative}. The equations corresponding to (CN3) for sections of type $\sigma_1=(a,0)$, $\sigma_2=(0,\beta)$ are entirely analogous to \eqref{eq:CN3mix1} and \eqref{eq:CN3mix2}, and hold for similar reasons. This proves the claim.

\medskip

\noindent{\em Claim 2. Assume that \eqref{eq:im1_lie}, \eqref{eq:im2_lie} and \eqref{eq:im3_lie} hold for $\mathcal{D}$ and $\calD^*$. Then ondition (CN4) for $\mathcalbb{D}$ is equivalent to (IM4) for $\mathcal{D}$ and $\mathcal{D}^*$, as well as $D^*(\varphi)=0$ and $D(\chi)=0$.}

\smallskip

To verify the claim, note that for sections of type $\sigma_1=(a,0)$, $\sigma_2=(b,0)$, (CN4) takes the form
\begin{align*}
 D_X([a,b]) + D_X^*(i_bi_a\varphi)  = &  [a,D_X(b)] + i_{D_X(b)}i_a\varphi - [b,D_X(a)] - i_{D_X(a)}i_b\varphi \\ 
 &+ D_{[\rho(b),X]}(a)
 - D_{[\rho(a),X]}(b).
\end{align*}
So in this case (CN4) amounts to (IM4) for $\mathcal{D}$ together with the following condition on $\varphi$ (using \eqref{defn:dual_derivation}):
$$
 \Lie_X\varphi(a,b,l(c))-\Lie_{r(X)}\varphi(a,b,c) - \varphi(D_Xa, b, c) -\varphi(a,D_Xb, c) - \varphi(a,b,D_Xc)=0,
$$
for all $a,b,c\in \Gamma(A)$. This last condition is the same as $D^*(\varphi) = 0$ when $\varphi \in \Gamma_l(\wedge^3 A^*)$ (see \eqref{eq:extension}). Similarly, (CN4) holds for sections of type $\sigma_1=(0,\alpha)$, $\sigma_2=(0,\beta)$ if and only if $\mathcal{D}^*$ satisfies (IM4) and $D(\chi)=0$.

For sections of type $\sigma_1=(0,\alpha)$, $\sigma_2=(b,0)$, (CN4) is equivalent to 
\begin{align}
 \label{eq:cn4_a} D_X(\Lie_\alpha b) & =  \Lie_\alpha(D_X b) + i_{D^*_X(\alpha)}d_{A^*}b - D_{[\rho_*(\alpha),X]}b - (\rho_*)^*i_X d\< D^*_{(\cdot)}\alpha, b\>,\\
 \label{eq:cn4_b} D_X^* (i_b d_A \alpha) &= i_{D_X(b)}d_A\alpha + \Lie_b D^*_X(\alpha) - D^*_{[\rho(b),X]}\alpha + \rho^*i_X d \, \<D^*_{(\cdot)}\alpha, b \>. 
\end{align}
These equations follows directly from \eqref{eq:dual_im_b} for $D$ and $D^*$ and $m=1$, which is equivalent to \eqref{eq:im4_lie} under the assumption that \eqref{eq:im1_lie}, \eqref{eq:im2_lie} and \eqref{eq:im3_lie} hold. Indeed, first notice that using \eqref{defn:dual_derivation} and $\rho_*^*D_X^{r,*}(d\<\alpha,b\>) = D_X(d_{A^*}\<\alpha,b\>)$ together with Cartan formula $\Lie_\alpha = i_\alpha d_{A^*} + d_{A^*} i_\alpha$, one can see that both equations are exactly the same under the change $\alpha \leftrightarrow b$. Now, using that
\begin{align*}
\<D_X^*(i_bd_A \alpha) - i_{D_X(b)}d_A\alpha, a\> & = -D_X^*(d_A\alpha)(a,b)\\
\<\Lie_b D^*_X(\alpha) + \rho^*i_X d \, \<D^*_{(\cdot)}\alpha, b \>, a\> & = -d_AD_X^*(\alpha)(a,b) + \Lie_X\<D_{\rho(a)}^*(\alpha),b\> + \<D^*_{[\rho(a),X]}(\alpha),b\>,
\end{align*}
one can check that \eqref{eq:cn4_b} is exactly \eqref{eq:dual_im_b} in degree 1.

The situation for sections of type $\sigma_1=(a,0)$, $\sigma_2=(0,\beta)$ is entirely analogous. This proves claim 2 and concludes the proof of the first part of the theorem.

The assertion about the Nijenhuis condition follows from the decompositions $\D=(D,D^*)$ and $\ell=(l,l^*)$, and the fact that $D$ is Nijenhuis (resp. Dolbeault) if and only is so is $D^*$, see \cite[Thm.~2.11]{Dru}.
\end{proof}

When $\mathcalbb{D}=(\D, \ell, r)$ is a Courant 1-derivation, there is also a compatibility with the bivector field $\pi \in \mathfrak{X}^2(M)$ in \eqref{eq:pi}, defined by the lagrangian splitting $E=A\oplus A^*$, see \cite[Prop.~4.6 (i)]{Dru}.

\begin{cor} 
The pair $(\pi, r)$ is compatible in the sense of \eqref{eq:compatpir}.  In particular, if the lagrangian splitting is by Dirac structures and $\mathcalbb{D}$ is Nijenhuis, then $(\pi,r)$ defines a Poisson-Nijenhuis structure.
\end{cor}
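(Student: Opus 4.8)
The plan is to reduce the statement to the characterization in Example~\ref{ex:MM}: the pair $(\pi,r)$ satisfies \eqref{eq:compatpir} precisely when $\pi^\sharp=\rho_*\circ\rho^*\colon T^*M\to TM$ intertwines the 1-derivation $\mathcal{D}^{r,*}$ on $T^*M$ with $\mathcal{D}^{r}$ on $TM$. Only the equations \eqref{eq:im1_lie} and \eqref{eq:im2_lie} will be needed, and by Theorem~\ref{thm:manin} these hold for $\mathcal{D}$ (with anchor $\rho$) and for $\mathcal{D}^*$ (with anchor $\rho_*$), since $\mathcalbb{D}$ is a Courant 1-derivation keeping the splitting $E=A\oplus A^*$ invariant. (Alternatively, the compatibility can be quoted from \cite[Prop.~4.6(i)]{Dru}.)

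First I would check the algebraic identity $r\circ\pi^\sharp=\pi^\sharp\circ r^*$. Dualizing \eqref{eq:im1_lie} for $\mathcal{D}$ gives $l^*\circ\rho^*=\rho^*\circ r^*$, while \eqref{eq:im1_lie} for $\mathcal{D}^*$ reads $\rho_*\circ l^*=r\circ\rho_*$; composing these two identities yields $r\circ\pi^\sharp=r\circ\rho_*\circ\rho^*=\rho_*\circ l^*\circ\rho^*=\rho_*\circ\rho^*\circ r^*=\pi^\sharp\circ r^*$.

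Next I would show that $\rho^*$ intertwines $\mathcal{D}^{r,*}$ and $\mathcal{D}^*$, i.e.\ $\rho^*\circ D^{r,*}_X=D^*_X\circ\rho^*$ for all $X\in\frakx(M)$. This is a direct unwinding of the defining formula \eqref{defn:dual_derivation} for $D^*$ on $A^*$: pairing $D^*_X(\rho^*\alpha)$ with $a\in\Gamma(A)$ and moving $\rho$ inside, the terms $\Lie_X\langle\alpha,\rho(l(a))\rangle$, $\Lie_{r(X)}\langle\alpha,\rho(a)\rangle$ and $\langle\alpha,\rho(D_X(a))\rangle$ become $\Lie_X\langle r^*\alpha,\rho(a)\rangle$, $\Lie_{r(X)}\langle\alpha,\rho(a)\rangle$ and $\langle\alpha,D^r_X(\rho(a))\rangle$ after using \eqref{eq:im1_lie} and \eqref{eq:im2_lie} for $\mathcal{D}$, and comparison with \eqref{defn:Dr_dual} identifies the result with $\langle\rho^*D^{r,*}_X(\alpha),a\rangle$. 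Applying now \eqref{eq:im2_lie} for $\mathcal{D}^*$, namely $\rho_*(D^*_X(\beta))=D^r_X(\rho_*(\beta))$, with $\beta=\rho^*\alpha$, gives
$$
\pi^\sharp\circ D^{r,*}_X(\alpha)=\rho_*\big(D^*_X(\rho^*\alpha)\big)=D^r_X\big(\rho_*(\rho^*\alpha)\big)=D^r_X\circ\pi^\sharp(\alpha),
$$
which is the vanishing of the Magri--Morosi concomitant of $(\pi,r)$. Together with the previous step this establishes \eqref{eq:compatpir}.

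For the last assertion: when the splitting is by Dirac structures, $(A,A^*)$ is a Lie bialgebroid, so $\pi$ defined by \eqref{eq:pi} is a Poisson bivector; and if $\mathcalbb{D}$ is Nijenhuis then so is $\mathcal{D}$ by Theorem~\ref{thm:manin}, whence $\N_r=0$ by \eqref{eq:van_Nijenhuis}, i.e.\ $r$ is a Nijenhuis operator. Since $(\pi,r)$ moreover satisfies \eqref{eq:compatpir}, Example~\ref{ex:MM} shows that $(\pi,r)$ is a Poisson--Nijenhuis structure. The whole argument is bookkeeping with the duality conventions; the only delicate point is matching the pairings and signs correctly in the middle step, and there is no genuine conceptual obstacle once Theorem~\ref{thm:manin} is in hand.
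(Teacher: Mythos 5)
Your proposal is correct and follows essentially the same route as the paper: the first identity in \eqref{eq:compatpir} comes from (IM1) for $\mathcal{D}$ and $\mathcal{D}^*$, the vanishing of the Magri--Morosi concomitant comes from (IM2) for both (you simply spell out the intermediate intertwining $\rho^*\circ D^{r,*}_X = D^*_X\circ\rho^*$ that the paper leaves implicit), and the final assertion follows from Theorem~\ref{thm:manin} exactly as in the text.
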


\begin{proof}
Using that $\pi^\sharp = \rho_* \circ \rho^*$, $\D=(D,D^*)$ and $\ell=(l,l^*)$, the first condition in \eqref{eq:compatpir} follows from condition (IM1) for $\mathcal{D}$ and $\mathcal{D}^*$, while the second condition in \eqref{eq:compatpir} follows from (IM2) for $\mathcal{D}$ and $\mathcal{D}^*$.
\end{proof}

Recall from \cite{Dru} that a {\em Lie-Nijenhuis bialgebroid} is a triple $(A, A^*, \mathcal{D})$, where $(A,A^*)$ is a Lie bialgebroid and $\mathcal{D}$ is a Nijenhuis 1-derivation on $A$ such that $\mathcal{D}$ is compatible with the Lie algebroid structure on $A$, and $\mathcal{D}^*$ is compatible with the Lie algebroid structure on $A^*$ (in the sense of Def.~\ref{def:LAcomp}). These are the infinitesimal objects corresponding to Poisson-Nijenhuis groupoids, see \cite[$\S$ 4.3]{Dru}.

By Theorem~\ref{thm:manin} we have the following enhancement of the known correspondence between Lie bialgebroids and Manin triples. 

\begin{cor}\label{cor:bialdouble}
Lie-Nijenhuis bialgebroids are equivalent to Courant-Nijenhuis algebroids equipped with a splitting by Dirac-Nijenhuis structures.
\end{cor}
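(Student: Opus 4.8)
The plan is to read this off directly from Theorem~\ref{thm:manin} together with the classical equivalence between Lie bialgebroids and Manin triples \cite{LiuXuWe}. Recall that a Manin triple is exactly a Courant algebroid equipped with a lagrangian splitting $E=A\oplus A^*$ whose two legs $A$ and $A^*$ are Dirac structures, and that under the proto-bialgebroid dictionary of \S\ref{subsec:proto} this is precisely the case in which the $3$-sections vanish, $\varphi=0$ and $\chi=0$, with $(A,A^*)$ a Lie bialgebroid. So the only thing to add to that correspondence is the Nijenhuis datum on both sides.

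For the direction from bialgebroids to algebroids, I would start with a Lie-Nijenhuis bialgebroid $(A,A^*,\mathcal{D})$, form its Drinfeld double $E=A\oplus A^*$ with the Courant bracket \eqref{eq:double_bracket} (in which $\varphi=\chi=0$), and equip $E$ with the 1-derivation $\mathcalbb{D}=(\D,\ell,r)$, $\D=(D,D^*)$, $\ell=(l,l^*)$. By construction $\mathcalbb{D}$ is symmetric and preserves the splitting, so it satisfies the standing Assumption preceding Theorem~\ref{thm:manin}. Because $\varphi=\chi=0$, the auxiliary conditions $\varphi\in\Gamma_l(\wedge^3A^*)$, $D^*(\varphi)=0$, $\chi\in\Gamma_{l^*}(\wedge^3A)$, $D(\chi)=0$ in that theorem hold vacuously, so its remaining hypotheses are precisely that $\mathcal{D}$ is compatible with the Lie algebroid $A$ and $\mathcal{D}^*$ with $A^*$ --- which is part of the definition of a Lie-Nijenhuis bialgebroid. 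Hence $\mathcalbb{D}$ is a Courant 1-derivation, and since $\mathcal{D}$ is Nijenhuis the last assertion of Theorem~\ref{thm:manin} makes it Courant-Nijenhuis. Finally, $A$ and $A^*$ are Dirac structures (legs of a Manin triple) and $\mathcalbb{D}$-invariant by construction, hence Dirac-Nijenhuis structures, so $(E,\mathcalbb{D})$ is a Courant-Nijenhuis algebroid split by Dirac-Nijenhuis structures.

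The reverse direction runs the same argument backwards. Given a Courant-Nijenhuis algebroid $(E,\mathcalbb{D})$ with a lagrangian splitting $E=A\oplus A^*$ into Dirac-Nijenhuis structures, the underlying split Courant algebroid is a Manin triple, so $(A,A^*)$ is a Lie bialgebroid and $\varphi=\chi=0$. As $A$ and $A^*$ are $\mathcalbb{D}$-invariant and $\mathcalbb{D}=\mathcalbb{D}^*$ (being a Courant 1-derivation), the standing Assumption holds and $\mathcalbb{D}$ restricts to mutually dual 1-derivations $\mathcal{D}$ on $A$ and $\mathcal{D}^*$ on $A^*$. The ``only if'' part of Theorem~\ref{thm:manin} then gives that $\mathcal{D}$ and $\mathcal{D}^*$ are compatible with the respective Lie algebroids, and its Nijenhuis clause gives that $\mathcal{D}$ is Nijenhuis; thus $(A,A^*,\mathcal{D})$ is a Lie-Nijenhuis bialgebroid. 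That the two constructions are inverse to one another is inherited from the Lie bialgebroid/Manin triple case --- taking the double and then splitting recovers $(A,A^*)$, and it visibly recovers $\mathcal{D}$ since $\D=(D,D^*)$ restricts to $D$ --- so no genuine obstacle arises. The only steps requiring care are the bookkeeping ones: checking that $\varphi=\chi=0$ really kills the auxiliary conditions of Theorem~\ref{thm:manin}, and that ``splitting by Dirac-Nijenhuis structures'' coincides with ``lagrangian splitting into two $\mathcalbb{D}$-invariant Dirac structures'', which is exactly the input required by the standing Assumption.
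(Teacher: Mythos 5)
Your proposal is correct and follows essentially the same route as the paper, which deduces the corollary directly from Theorem~\ref{thm:manin} (with $\varphi=0$, $\chi=0$ making the auxiliary conditions vacuous) combined with the Liu--Weinstein--Xu equivalence between Lie bialgebroids and Manin triples. Your extra bookkeeping --- checking the standing Assumption in both directions and that the two constructions are mutually inverse --- is exactly the content the paper leaves implicit.
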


When $\mathcalbb{D}$  is a Dolbeault 1-derivation, Theorem~\ref{thm:manin} gives the known correspondence between Lie (quasi-)bialgebroids and Manin (quasi-)triples in the holomorphic category.

\appendix

\end{document}